\newcolumntype{H}{>{\setbox0=\hbox\bgroup}c<{\egroup}@{}}
\newcommand{\bunderline}[1]{\underline{#1\mkern-3mu}\mkern3mu }
\newcommand{\etal}{\emph{et al.}\ }
\newcommand{\df}{:=}
\newcommand{\T}{\textrm{T}}
\DeclareRobustCommand{\onontimes}{%
	\mathbin{\mathpalette\on@ntimes\relax}%
}
\newcommand{\on@ntimes}[2]{%
	\vcenter{\hbox{%
			\sbox0{\m@th$#1\otimes$}%
			\setlength\unitlength{\wd0}%
			\begin{picture}(1,1)
			\linethickness{0.35pt}
			\put(.5,.5){\circle{.8}}
			\end{picture}%
	}}%
}
\newcommand{\vc}[1]{\underline{#1}}
\newcommand{\fn}[1]{\MakeUppercase{#1}}
\newcommand{\el}[1]{#1}
\newcommand{\fv}[1]{\underline{\fn{#1}}} 
\newcommand{\mat}[1]{\bunderline{\bunderline{#1}}}
\newcommand{\zeromat}{\vc{\vc{0}}}
\newcommand{\pdvln}[2]{\partial{#1} / \partial{#2}}
\newcommand{\set}[1]{\mathcal{\MakeUppercase{#1}}} 
\newcommand{\opr}[1]{\mathcal{\MakeUppercase{#1}}}
\newcommand{\R}{\mathbb{R}}
\newcommand{\xt}{(\vc{x},t)}
\newcommand{\xit}{(\hat{\vc{x}},t)}
\newcommand{\il}{\langle}
\newcommand{\ir}{\rangle}
\newcommand{\ug}[1]{\mathrm{#1}}
\let\oldnabla\nabla
\renewcommand{\nabla}{\vc{\oldnabla}}
\renewcommand{\textrm}[1]{\text{\textup{#1}}}
\spnewtheorem{assumption}{Assumption}[section]{\bfseries}{}
\numberwithin{equation}{section}
\begin{document}
	
\setcounter{tocdepth}{2}

\title{A unifying algebraic framework for discontinuous Galerkin and flux reconstruction methods based on the summation-by-parts property}
\titlerunning{Unifying algebraic framework for DG and FR methods based on the SBP property}      
\author{\normalfont Tristan Montoya \and David W.\ Zingg }

\institute{T. Montoya\at
		University of Toronto Institute for Aerospace Studies, Toronto, Canada\\
              \email{tristan.montoya@mail.utoronto.ca} 
           \and
           D. W. Zingg \at
          	University of Toronto Institute for Aerospace Studies, Toronto, Canada\\
          	\email{dwz@utias.utoronto.ca}
}

\newcommand{\ignore}[1]{}
\newcommand{\nobibentry}[1]{{\let\nocite\ignore\bibentry{#1}}}

\date{May 26, 2021}

\maketitle
\begin{abstract}
We propose a unifying framework for the matrix-based formulation and analysis of discontinuous Galerkin (DG) and flux reconstruction (FR) methods for conservation laws on general unstructured grids. Within such an algebraic framework, the multidimensional summation-by-parts (SBP) property is used to establish the discrete equivalence of strong and weak formulations, as well as the conservation and energy stability properties of a broad class of DG and FR schemes. Specifically, the analysis enables the extension of the equivalence between the strong and weak forms of the discontinuous Galerkin collocation spectral-element method demonstrated by Kopriva and Gassner (J Sci Comput 44:136--155, 2010) to more general nodal and modal DG formulations, as well as to the Vincent-Castonguay-Jameson-Huynh (VCJH) family of FR methods. Moreover, new algebraic proofs of conservation and energy stability for DG and VCJH schemes with respect to suitable quadrature rules and discrete norms are presented, in which the SBP property serves as a unifying mechanism for establishing such results. Numerical experiments are provided for the two-dimensional linear advection and Euler equations, highlighting the design choices afforded for methods within the proposed framework and corroborating the theoretical analysis. 
\keywords{
	high-order methods
	\and discontinuous Galerkin 
	\and flux reconstruction
	\and summation-by-parts
	\and conservation laws
	\and energy stability}
\subclass{65M12 \and 65M60 \and 65M70}
\end{abstract}
\section{Introduction}
Hyperbolic and convection-dominated systems of conservation laws are of considerable importance in the mathematical modelling of physical phenomena, with numerous scientific and engineering disciplines relying heavily on efficient and robust numerical methods for the solution of such partial differential equations. High-order methods (i.e.\ those of third order or higher) have shown significant promise for problems involving the propagation of waves over long distances (an early example being the work of Kreiss and Oliger \cite{kreiss_oliger_72}) and the fine-scale resolution of turbulent flow structures (as surveyed by Wang \etal \cite{wangreview13}), both of which incur prohibitive computational expense when conventional second-order spatial discretizations are used. The most popular high-order methods in current use by practitioners for such problems are arguably the well-established discontinuous Galerkin (DG) methods and the more recent flux reconstruction (FR) schemes. DG methods were originally introduced for the steady neutron transport equation by Reed and Hill \cite{reed_hill_dg_73}, and have since been applied successfully to a wide range of problems following extensive development by Cockburn, Shu, and collaborators (see, for example, \cite{cockburn_shu_general_framework_89,cockburnShuSystems_89,cockburn_shu_multidimensional_90,cockburn_shu_dg_multidimensional_systems_98}). The FR approach was first proposed by Huynh \cite{huynh_FR_07} for one-dimensional conservation laws, extended to triangular elements by Wang and Gao \cite{wang_gao_lcp_09} through the so-called lifting collocation penalty (LCP) formulation, and further developed by several other authors, including Vincent \etal \cite{vincent_esfr_10}, Castonguay \etal \cite{castonguay_vincent_jameson_triangular_fr_11}, and Williams and Jameson \cite{williams_esfr_adv_diff_tetrahedra_13}, who identified energy-stable families of FR schemes on one-dimensional, triangular, and tetrahedral elements, respectively. 

\par
Although DG methods are derived from a weak (i.e.\ variational) formulation whereas FR methods are derived from a strong (i.e.\ differential) formulation, the two approaches are closely related, as discussed in Huynh's seminal paper \cite{huynh_FR_07}, as well as in further investigations by Allaneau and Jameson \cite{allaneau_jameson_dg_fr_11}, De Grazia \etal \cite{degrazia_connections_dg_fr_14}, Mengaldo \etal \cite{mengaldo_degrazia_fr_dg_connections_curvilinear_16}, and Zwanenburg and Nadarajah \cite{zwanenburg_esfr_filtered_dg_16}. Moreover, certain DG and FR schemes have been recast in terms of summation-by-parts (SBP) operators, which are discrete differential operators equipped with compatible inner products such that integration by parts is mimicked algebraically (see, for example, the review papers of Del Rey Fern\'andez \etal \cite{delrey_sbp_sat_review_14} and Sv\"ard and Nordstr\"om \cite{svard_nordstrom_sbpreview_14}). Beginning with the work of Kreiss and Scherer \cite{kreiss_scherer_sbp_74}, this mimetic property, referred to as the SBP property, has been instrumental in the development and analysis of high-order finite-difference methods with discrete stability and conservation properties, and there have been significant advances in recent years towards extending the SBP methodology to encompass a wider range of new and existing methods, facilitated by the one-dimensional and multidimensional generalizations of Del Rey Fern\'andez \etal \cite{delrey_generalized_framework_14} and Hicken \etal \cite{hicken_mdsbp_16}, respectively.
\par
Due to its reliance on discretization-agnostic matrix properties, the SBP approach allows for existing theoretical results established for particular numerical methods to be reinterpreted within a more general and arguably simpler setting, and facilitates the cross-pollination of techniques between different schemes sharing common algebraic properties. This cross-pollination has proved fruitful over the past decade, with Gassner \cite{gassner_dgsem_sbp_13} and Ranocha \etal \cite{ranocha_sbp_cpr_16} employing the SBP property in order to develop nonlinearly stable DG and FR methods, respectively, for Burgers' equation in one space dimension using skew-symmetric split forms originally introduced for finite-difference discretizations. Motivated by these contributions, as well as subsequent extensions to entropy-stable and kinetic-energy-preserving DG-type schemes exploiting the SBP property (see, for example, Gassner \etal \cite{gassner_winters_kopriva_splitform_nodaldg_sbp_16} and Chan \cite{chan_discretely_entropy_conservative_dg_sbp_18}), the goal of this work is to develop a comprehensive framework enabling the unification of existing algebraic techniques for the analysis of semi-discrete DG and FR methods applied to time-dependent conservation laws and the extension of the existing theory based on the SBP property to more general DG and FR formulations. To this end, the analysis has resulted in the following novel contributions.

\begin{enumerate}
	\item The discrete equivalence of strong and weak formulations, demonstrated by Kopriva and Gassner \cite{kopriva_nodaldg_choosing_quadrature_weakform_10} in the context of DG methods employing collocated tensor-product Legendre-Gauss (LG) and Legendre-Gauss-Lobatto (LGL) quadrature rules (i.e.\ the LG and LGL variants discontinuous Galerkin collocation spectral-element method, which we refer to as the DGSEM-LG and DGSEM-LGL, respectively), is extended to more general quadrature-based and collocation-based DG formulations by way of the SBP property.
	\item The Vincent-Castonguay-Jameson-Huynh (VCJH) family of FR schemes (as described in \cite{vincent_esfr_10,castonguay_vincent_jameson_triangular_fr_11,williams_esfr_adv_diff_tetrahedra_13}), is recast in a generalized matrix form applicable to general element types, allowing for the use of modal bases and over-integration\footnote{In this paper, over-integration (also referred to as polynomial de-aliasing, and discussed further by Kirby and Karniadakis \cite{kirby_dealiasing_03} and Mengaldo \etal \cite{mengaldo_dg_fr_dealiasing_15}) refers to the use of a larger number of quadrature nodes on an element or facet than the dimension of the corresponding local approximation space.} of the volume and facet terms. Through the SBP property, these formulations are shown to be equivalent to filtered DG schemes in strong or weak form, in the latter case leading to methods which are algebraically equivalent to conventional strong-form FR methods while offering the potential for improved computational efficiency.
	\item Proofs of discrete conservation with respect to suitable quadrature rules are presented for DG and VCJH methods in strong and weak form, differing from the existing conservation proofs for the FR approach due to their reliance on the discrete divergence theorem (which holds as a consequence of the SBP property) rather than the continuity of the corrected flux, which is not explicitly reconstructed for multidimensional FR schemes on non-tensor-product elements.
	\item Energy stability (with respect to suitable discrete norms) is established as a consequence of the SBP property for DG and FR schemes applied to constant-coefficient linear advection problems on affine polytopal meshes with periodic or inflow-outflow boundary conditions, recovering the existing stability results for VCJH schemes (i.e.\ those in \cite[]{vincent_esfr_10,castonguay_vincent_jameson_triangular_fr_11,williams_esfr_adv_diff_tetrahedra_13}) as special cases of a more general theory.
\end{enumerate}
We now outline the content of the remainder of this paper. In \textsection \ref{sec:prelims}, we
describe the systems of conservation laws to which we seek numerical solutions, as well as the meshes and approximation spaces employed for the discretizations considered in this work; we then provide a brief review of the DG and FR approaches. In \textsection \ref{sec:formulation}, we develop a unifying matrix formulation for a broad class of DG and FR methods, which we employ in \textsection \ref{sec:analysis} for the analysis of such schemes based on the SBP property. In \textsection \ref{sec:numerical}, we present numerical experiments for the two-dimensional linear advection and Euler equations in support of the theory. Concluding remarks are provided in \textsection \ref{sec:conclusions}.

\section{Preliminaries}\label{sec:prelims}
\subsection{Notation}
Where possible, the symbols in this paper follow those used by Ranocha \etal \cite{ranocha_sbp_cpr_16} and Chan \cite{chan_discretely_entropy_conservative_dg_sbp_18}. As in the former, we use a single underline to denote vectors (treated as column matrices), and a double underline to denote matrices. Entries of matrices and vectors are indexed using subscripts, while elements of sequences are typically indexed using superscripts in parentheses. The symbols $\vc{0}$, $\vc{1}$, $\zeromat$, and $\mat{I}$ are reserved for vectors of zeros, vectors of ones, matrices of zeros, and identity matrices, respectively, with dimensions inferred from the contexts in which they appear.
\subsection{Problem formulation}
We consider a first-order system of time-dependent conservation laws governing the evolution of the $N_\textrm{Eq} \in \mathbb{N}$ solution variables $\fv{u}\xt \in \ug{\Upsilon} \subseteq \R^{N_{\textrm{Eq}}}$, where $\ug{\Upsilon}$ denotes the set of admissible solution states. Such a system of partial differential equations takes the form
\begin{equation}\label{eq:pde}
\begin{alignedat}{2}
\pdv{\fv{u}\xt}{t}+ \sum_{m=1}^d\pdv{\fv{f}^{(m)}(\fv{u}\xt)}{x_m}  &= \vc{0}, && \forall \,(\vc{x},t) \in \ug{\Omega} \times (0, T),\\
\fv{u}(\vc{x},0) &= \fv{u}^0(\vc{x}),   \qquad &&\forall \, \vc{x} \in \ug{\Omega},
\end{alignedat}
\end{equation}
subject to appropriate boundary conditions, if necessary, where $\ug{\Omega} \subset \R^d$ is a domain of dimension $d \in \mathbb{N}$ with a piecewise smooth boundary,  $T \in \R_+$ is the final time, and $\fv{f}^{(m)} : \ug{\Upsilon} \to \R^{N_\textrm{Eq}}$ are the flux components, which we define for $m \in \{1,\ldots, d\}$. For future reference, we also define the outward unit normal vector to the domain boundary as $\vc{n} : \partial\ug{\Omega} \to \mathbb{S}^{d-1}$,
where $\mathbb{S}^{d-1} \df \{\vc{x} \in \R^d : \lVert \vc{x} \rVert_2 = 1\}$ denotes the unit $(d-1)$-sphere and $\lVert \vc{x} \rVert_2 \df \sqrt{x_1^2 + \cdots + x_d^2}$ is the standard Euclidean norm.  \par
While the present framework applies to a very broad class of problems which may be formulated as in (\ref{eq:pde}), we highlight two representative examples, which serve as test cases for the numerical experiments in \textsection \ref{sec:numerical}. We first consider the linear advection equation, which governs the transport of a scalar quantity $\fn{u}\xt \in \R$ at a velocity $\vc{a} \in \R^d$, assumed here to be constant, and is given in conservation form by
\begin{equation}\label{eq:advection}
\pdv{\fn{u}\xt}{t} + \sum_{m=1}^d \pdv{(a_m\fn{u}\xt)}{x_m} = 0.
\end{equation}
We also consider the Euler equations, which govern the conservation of mass, momentum, and energy for a compressible, inviscid, adiabatic fluid, given as a system of $N_{\mathrm{Eq}} = d+2$ equations in the form of (\ref{eq:pde}) by
\begin{equation}\label{eq:euler}
\pdv{t}\underbrace{\mqty[\rho\xt \\ \rho\xt \fn{v}_1\xt \\ \vdots \\ \rho\xt\fn{v}_d\xt \\ \fn{e}\xt]}_{=:\, \fv{u}\xt} + \sum_{m=1}^d \pdv{x_m}\underbrace{\mqty[\rho\xt\fn{v}_m\xt \\ \rho\xt\fn{v}_1\xt\fn{v}_m\xt  + \fn{p}\xt\delta_{1m} \\ \vdots \\ \rho\xt\fn{v}_d\xt\fn{v}_m\xt + \fn{p}\xt\delta_{dm}  \\ \fn{v}_m\xt(\fn{e}\xt + \fn{p}\xt)]}_{=:\, \fv{f}^{(m)}(\fv{u}\xt)} = \vc{0}.
\end{equation}
In the above, $\rho\xt \in \R$ denotes the fluid density, $\fv{v}\xt \in \R^d$ denotes the flow velocity, $\fn{e}\xt \in \R$ denotes the total energy per unit volume, and $\fn{p}\xt \in \R$ denotes the pressure, which is related to the other variables through the equation of state, which is given by $\fn{p}\xt = (r-1)\qty(\fn{e}\xt - \frac{1}{2}\rho\xt\lVert \fv{v}\xt\rVert_2^2)$ for an ideal gas with constant specific heat and specific heat ratio $r$. The set of admissible solution states is therefore given by $\ug{\Upsilon} \df \{ \fv{u}\xt \in \R^{N_{\textrm{Eq}}+2} : \fn{p}\xt,\rho\xt > 0 \}$.
\begin{remark}
Throughout this paper, we tacitly assume that the values of numerical solutions to (\ref{eq:pde}) remain within $\ug{\Upsilon}$. This is difficult to ensure \emph{a priori} for nonlinear problems, and often requires the use of specially designed limiting procedures (see, for example, Zhang and Shu \cite{zhang_shu_max_principle_positivity_high_order_review_11}), which are not considered in this work.
\end{remark}
\subsection{Mesh and coordinate transformation}\label{sec:mesh}
As is standard for finite-element methods (see, for example, Ern and Guermond \cite[\textsection 1.3]{ern_guermond_theory_practice_fe}), we consider a mesh $\set{T}_h \df \{\ug{\Omega}^{(\kappa)}\}_{\kappa=1}^K$ consisting of $K \in \mathbb{N}$ compact elements of characteristic size $h \in \R_+$ with disjoint interiors, satisfying $\cup_{\kappa=1}^K \ug{\Omega}^{(\kappa)} = \bar{\ug{\Omega}}$. It is assumed that the boundary of each element is composed of a finite number of smooth facets,\footnote{For simplicity of notation, it is assumed that all elements are of the same type and therefore have an equal number of facets, which we denote as $N_f \in \mathbb{N}$.} which are denoted by $\ug{\Gamma}^{(\kappa,\gamma)} \subset \partial\ug{\Omega}^{(\kappa)}$ and indexed as $\gamma \in \{1,\ldots, N_f\}$, each with an outward unit normal vector given by $\vc{n}^{(\kappa,\gamma)} : \ug{\Gamma}^{(\kappa,\gamma)} \to \mathbb{S}^{d-1}$. We further assume that each element is the image of a polytopal reference element $\hat{\ug{\Omega}} \subset \R^d$ under a smooth, time-invariant mapping $\fv{X}^{(\kappa)} :\hat{\ug{\Omega}} \rightarrow \ug{\Omega}^{(\kappa)}$, where the Jacobian of the transformation is given by $\mat{\mathcal{J}}^{(\kappa)} : \hat{\ug{\Omega}} \to \R^{d\times d}$ such that $\mathcal{J}_{mn}^{(\kappa)}(\hat{\vc{x}}) \df \pdvln{\fn{X}_m^{(\kappa)}(\hat{\vc{x}})}{\hat{x}_n}$ with $\mathcal{J}^{(\kappa)}(\hat{\vc{x}}) \df \mathrm{det}(\mat{\mathcal{J}}^{(\kappa)}(\hat{\vc{x}})) > 0$, thereby ensuring that the mapping is bijective and orientation preserving. Each facet $\hat{\ug{\Gamma}}^{(\gamma)} \subset \partial\hat{\ug{\Omega}}$ of the reference element is then assumed to be flat, mapping onto the (potentially curved) physical facet $\ug{\Gamma}^{(\kappa,\gamma)}$ under $\fv{X}^{(\kappa)}$, 
with the corresponding outward unit normal vector $\hat{\vc{n}}^{(\gamma)} \in \mathbb{S}^{d-1}$ transforming as
\begin{equation}\label{eq:nanson}
\vc{n}^{(\kappa,\gamma)}(\fv{X}^{(\kappa)}(\hat{\vc{x}})) = \frac{\mathcal{J}^{(\kappa)}(\hat{\vc{x}})(\mat{\mathcal{J}}^{(\kappa)}(\hat{\vc{x}}))^{-\T} \hat{\vc{n}}^{(\gamma)}}{\lVert\mathcal{J}^{(\kappa)}(\hat{\vc{x}})(\mat{\mathcal{J}}^{(\kappa)}(\hat{\vc{x}}))^{-\T} \hat{\vc{n}}^{(\gamma)} \rVert_2},
\end{equation}
where we note that such a transformation is well known in the continuum mechanics literature (see, for example, Gurtin \etal \cite[\textsection 8.1]{gurtin_mechanics_continua}) and define $\mathcal{J}^{(\kappa,\gamma)}(\hat{\vc{x}}) \df \lVert\mathcal{J}^{(\kappa)}(\hat{\vc{x}})(\mat{\mathcal{J}}^{(\kappa)}(\hat{\vc{x}}))^{-\T}\hat{\vc{n}}^{(\gamma)} \rVert_2$ for future reference. With such definitions and assumptions in place, we recall from Vinokur \cite{vinokur_gasdynamics_curvilinear_74} that systems in the form of (\ref{eq:pde}) retain a conservative form when mapped to the reference element, given in the present context by
\begin{equation}\label{eq:pde_ref} 
\begin{multlined}
\pdv{\mathcal{J}^{(\kappa)}(\hat{\vc{x}})\fv{u}(\fv{X}^{(\kappa)}(\hat{\vc{x}}),t)}{t} \\ +  \sum_{m=1}^d\pdv{\hat{x}_m}\Bigg(\sum_{n=1}^d \mathcal{J}^{(\kappa)}(\hat{\vc{x}})[(\mat{\mathcal{J}}^{(\kappa)}(\hat{\vc{x}}))^{-1}]_{mn} \fv{f}^{(n)}(\fv{u}(\fv{X}^{(\kappa)}(\hat{\vc{x}}),t))\Bigg)  = \vc{0}.
\end{multlined}
\end{equation}
Since periodic boundaries may be treated identically to interior interfaces for systems formulated as in (\ref{eq:pde_ref}), we hereinafter use the symbol $\partial\ug{\Omega}$ to refer only to non-periodic portions of the boundary.
For the purposes of our analysis, we may then define the sets of element-facet index pairs $\set{K} \df \{1,\ldots, K\} \times \{1,\ldots, N_f \} $, $\set{K}_{\partial\ug{\Omega}} \df \{(\kappa,\gamma) \in \set{K} : \ug{\Gamma}^{(\kappa,\gamma)} \subset \partial\ug{\Omega} \}$, and $\set{K}_{\ug{\Omega}} \df \set{K} \setminus \set{K}_{\partial\ug{\Omega}}$, with the subsets $\set{K}_{\partial\ug{\Omega}}$ and $\set{K}_{\ug{\Omega}}$ corresponding to facets lying on the non-periodic portion of the domain boundary, and those forming interior or periodic interfaces, respectively.
\subsection{Polynomial approximation space}\label{sec:approx_space}
Considering the transformed system in (\ref{eq:pde_ref}), we seek a semi-discrete approximation $\fv{u}^{(h,\kappa)}\xit$ of $\fv{u}(\fv{X}^{(\kappa)}(\hat{\vc{x}}),t)$ such that for all $\kappa \in \{1,\ldots, K\}$, $e \in \{1,\ldots, N_{\textrm{Eq}}\}$, and $t \in (0,T)$, each component $\fn{u}_e^{(h,\kappa)}(\cdot,t)$ of the numerical solution lies within a finite-dimensional polynomial space of the form
\begin{equation}\label{eq:ref_space}
\begin{aligned}
\set{V} \df \Big\{\hat{\ug{\Omega}} \ni \hat{\vc{x}} \mapsto \fn{v}(\hat{\vc{x}}) \in \R : \ &\exists \{v^{(\vc{\alpha})}\}_{\vc{\alpha} \in \set{N}} \subset \R \quad \text{s. t.}\ \\ &\fn{v}(\hat{\vc{x}}) = \sum_{\vc{\alpha}\in \set{N}} v^{(\vc{\alpha})} \hat{x}_1^{\alpha_1} \cdots \hat{x}_d^{\alpha_d} \Big\},
\end{aligned}
\end{equation}
where $\set{N} \supseteq \{\vc{\alpha}_1 \in \mathbb{N}_0^d : \lvert \vc{\alpha} \rvert \leq p\}$ is a finite set of multi-indices\footnote{As is customary, we define the order of the multi-index $\vc{\alpha} \in \mathbb{N}_0^d$ as $\lvert \vc{\alpha} \rvert \df \alpha_1 + \cdots + \alpha_d$.} with cardinality $N_p \in \mathbb{N}$. The standard total-degree and tensor-product polynomial spaces $\set{P}_p(\hat{\ug{\Omega}})$ and $\set{Q}_p(\hat{\ug{\Omega}})$ are then recovered with $\set{N} \df \{\vc{\alpha} \in \mathbb{N}_0^d : \lvert \vc{\alpha} \rvert \leq p\}$ and $\set{N} \df \{\vc{\alpha} \in \mathbb{N}_0^d : \max_{m=1}^d \alpha_m \leq p\}$, respectively, and it is hereinafter assumed that, as is the case for $\set{P}_p(\hat{\ug{\Omega}})$ and $\set{Q}_p(\hat{\ug{\Omega}})$ with any $p \in \mathbb{N}_0$, the space $\set{V}$ is closed under partial differentiation with respect to each coordinate and contains the space of constant functions on the reference element, which we denote as $\set{P}_0(\hat{\ug{\Omega}})$. The global semi-discrete solution is therefore given by $\fv{u}^h\xt \df \bigoplus_{\kappa=1}^{K}\fv{u}^{(h,\kappa)}\big((\fv{X}^{(\kappa)})^{-1}(\vc{x}),t\big)$ such that $\fn{u}_e^h(\cdot,t) \in \set{V}_h$ for all $e \in \{1,\ldots, N_{\mathrm{Eq}}\}$ and $t \in (0,T)$, where we define the global approximation space
\begin{equation}\label{eq:solution_space}
\set{V}_h \df \Big\{\fn{v} \in L^2(\ug{\Omega}) :  \fn{v}\big\lvert_{\ug{\Omega}^{(\kappa)}}\circ \fv{X}^{(\kappa)} \in \set{V}, \ \forall \, \ug{\Omega}^{(\kappa)} \in \set{T}_h\Big\},
\end{equation}
with $L^2(\ug{\Omega})$ denoting the space of square-integrable functions on the domain $\ug{\Omega}$.
\begin{remark}
Rather than approximating $\fv{u}(\fv{X}^{(\kappa)}(\hat{\vc{x}}),t)$ such that $\fn{u}_e^{(h,\kappa)}(\cdot,t) \in \set{V}$, it is possible to approximate  $\mathcal{J}^{(\kappa)}(\hat{\vc{x}})\fv{u}(\fv{X}^{(\kappa)}(\hat{\vc{x}}),t)$ such that $\mathcal{J}^{(\kappa)}\fn{u}_e^{(h,\kappa)}(\cdot,t) \in \set{V}$. This modification of the approximation space has no effect for affine elements, but simplifies the treatment of curvilinear coordinates for time-dependent problems. While the analysis in \textsection \ref{sec:analysis} extends in a straightforward manner to such modified formulations, we note that, as discussed by Yu \etal \cite[\textsection 3]{yu_wang_liu_dg_sd_cpr_comparison_14}, methods employing such approximation spaces on curved elements may be less accurate than those employing (\ref{eq:solution_space}) as described in this paper.
\end{remark}
\subsection{Discontinuous Galerkin method}
Integrating each equation in (\ref{eq:pde_ref}) by parts against a test function belonging to the space $\set{V}$, we obtain a local weak formulation of the conservation law, where the resulting integrals may be expressed in terms of the standard $L^2$ inner products $\il\fn{u},\fn{v}\ir_{\hat{\ug{\Omega}}} \df \int_{\hat{\ug{\Omega}}} \fn{u}(\hat{\vc{x}})\fn{v}(\hat{\vc{x}}) \, \dd \hat{\vc{x}}$ and $\il\fn{u},\fn{v}\ir_{\hat{\ug{\Gamma}}^{(\gamma)}} \df \int_{\hat{\ug{\Gamma}}^{(\gamma)}} \fn{u}(\hat{\vc{x}})\fn{v}(\hat{\vc{x}}) \, \dd \hat{s}$. After applying a consistent and conservative numerical flux function $\fv{f}^* : \ug{\Upsilon} \times \ug{\Upsilon} \times \mathbb{S}^{d-1} \to \R^{N_{\textrm{Eq}}}$, typically corresponding to an approximate Riemann solver developed in the context of finite-volume methods (see, for example, Toro \cite{toro09}), we obtain the standard weak-form DG approximation on the reference element, which is given by
\begin{equation}\label{eq:dg}
\begin{multlined}
\bigg\il\fn{v},  \pdv{\mathcal{J}^{(\kappa)}\fn{u}_e^{(h,\kappa)}(\cdot,t)}{t} \bigg\ir_{\hat{\ug{\Omega}}} - \sum_{m=1}^d\bigg\il\pdv{\fn{v}}{\hat{x}_m},\fn{f}_e^{(h,\kappa,m)}(\cdot,t)\bigg\ir_{\hat{\ug{\Omega}}} \\ + \sum_{\gamma=1}^{N_f}\bigg\il\fn{v}, \fn{f}_e^{(*,\kappa,\gamma)}(\cdot,t)\bigg\ir_{\hat{\ug{\Gamma}}^{(\gamma)}} 
= 0, \quad \forall \, \fn{v} \in \set{V},
\end{multlined}
\end{equation}
for all $\kappa \in \{1,\ldots, K\}$, $e \in \{1,\ldots, N_{\textrm{Eq}}\}$, and $t \in (0,T)$. In the above, the volume-weighted contravariant flux components appearing in parentheses in (\ref{eq:pde_ref}) are evaluated in terms of the numerical solution as
\begin{equation}\label{eq:transformed_flux}
\fv{f}^{(h,\kappa,m)}\xit \df \sum_{n=1}^d\mathcal{J}^{(\kappa)}(\hat{\vc{x}}) [(\mat{\mathcal{J}}^{(\kappa)}(\hat{\vc{x}}))^{-1}]_{mn} \fv{f}^{(n)}(\fv{u}^{(h,\kappa)}\xit),
\end{equation}
and the appropriately scaled numerical flux components are given as
\begin{equation}\label{eq:num_flux}
\fv{f}^{(*,\kappa,\gamma)}\xit \df \mathcal{J}^{(\kappa,\gamma)}(\hat{\vc{x}}) \fv{f}^*\big(\fv{u}^{(h,\kappa)}\xit, \fv{u}^{(+,\kappa,\gamma)}\xit, \vc{n}^{{(\kappa,\gamma)}}(\fv{X}^{(\kappa)}(\hat{\vc{x}}))\big),
\end{equation}
 where $\fv{u}^{(+,\kappa,\gamma)}\xit$ denotes the external data corresponding to a weakly imposed boundary or interface condition. 

\subsection{Flux reconstruction method}\label{sec:review_fr}
While DG and FR methods both employ polynomial approximation spaces as described in \textsection \ref{sec:approx_space}, the FR approach involves the discretization of the strong form of (\ref{eq:pde_ref}) rather than the weak form, and traditionally employs a collocation-based approximation rather than a Galerkin projection. Considering a unisolvent nodal set $\set{S} \df \{ \hat{\vc{x}}^{(i)} \}_{i=1}^{N} \subset \hat{\ug{\Omega}}$ for the space $\set{V}$ such that for a given function $\fn{v} : \hat{\ug{\Omega}} \to \R$, there exists a unique interpolant $\opr{I}\fn{v} \in \set{V}$ satisfying $(\opr{I}\fn{v})(\hat{\vc{x}}^{(i)}) = \fn{v}(\hat{\vc{x}}^{(i)})$ for all $i \in \{1,\ldots, N\}$, a standard FR scheme may be expressed concisely as
\begin{equation}\label{eq:fr}
\begin{multlined}
\pdv{\opr{I}(\mathcal{J}^{(\kappa)}\fn{u}_e^{(h,\kappa)}(\cdot,t))}{t} \\ + \sum_{m=1}^d \pdv{\hat{x}_m} \bigg(\opr{I}\fn{f}_e^{(h,\kappa,m)}(\cdot,t)   + \sum_{\gamma=1}^{N_f}\sum_{j=1}^{N_\gamma}\fn{g}_m^{(\gamma,j)}\Delta \fn{f}_e^{(\kappa,\gamma)}(\hat{\vc{x}}^{(\gamma,j)},t)\bigg) = 0
\end{multlined}
\end{equation}
for all $\kappa \in \{1,\ldots, K\}$, $e \in \{1,\ldots, N_{\textrm{Eq}}\}$, and $t \in (0,T)$.  In the above, we associate each node in the set $\set{S}^{(\gamma)} \df  \{ \hat{\vc{x}}^{(\gamma, i)} \}_{i=1}^{N_\gamma} \subseteq \hat{\ug{\Gamma}}^{(\gamma)}$ defined for $\gamma \in \{1,\ldots, N_\gamma\}$ with a vector-valued correction function $\fv{g}^{(\gamma,j)} : \hat{\ug{\Omega}} \to \R^d$ satisfying $\nabla \cdot \fv{g}^{(\gamma,j)} \in \set{V}$ and $\fv{g}^{(\gamma,j)}(\hat{\vc{x}}^{(\eta,i)}) \cdot \hat{\vc{n}}^{(\eta)} = \delta_{\gamma\eta}\delta_{ij}$ for all $\eta \in \{1,\ldots, N_f\}$ and $i \in \{1,\ldots, N_\eta\}$, and we define the flux difference in the normal direction as
\begin{equation}\label{eq:normal_flux_difference}
\begin{multlined}
\Delta\fn{f}_e^{(\kappa,\gamma)}\xit \df  \fn{f}_e^{(*,\kappa,\gamma)}\xit- \sum_{n=1}^d\hat{n}^{(\gamma)}_n (\opr{I}\fn{f}_e^{(h,\kappa,n)}(\cdot,t))(\hat{\vc{x}}).
\end{multlined}
\end{equation}
The properties of the resulting scheme are then determined by the choice of correction functions, which we discuss in \textsection \ref{sec:correction}.
\section{Algebraic formulation}\label{sec:formulation}
\subsection{Discrete operators and the summation-by-parts property}
Before presenting our algebraic formulation of the DG and FR methods, we must first define certain operators associated with the approximation space $\set{V}$ and their associated matrix representations, the properties of which are discussed in this section.
\subsubsection*{Discrete inner products}
In order to construct approximations of the $L^2$ inner products $\il \fn{u},\fn{v} \ir_{\hat{\ug{\Omega}}}$ and $\il \fn{u},\fn{v} \ir_{\hat{\ug{\Gamma}}^{(\gamma)}}$ based on the values of $\fn{u}$ and $\fn{v}$ on nodal sets $\set{S}$ and $\set{S}^{(\gamma)}$ defined as in \textsection \ref{sec:review_fr}, we relax the assumption of unisolvency to allow for $N \geq N_p$ and define discrete inner products given by
\begin{equation}\label{eq:disc_in_prod_vol}
\il \fn{u},\fn{v} \ir_{\mat{W}} \df \sum_{i=1}^{N}\sum_{j=1}^{N}\fn{u}(\hat{\vc{x}}^{(i)})\el{W}_{ij}\fn{v}(\hat{\vc{x}}^{(j)})
\end{equation}
and
\begin{equation}\label{eq:disc_in_prod_facet}
\il \fn{u},\fn{v} \ir_{\mat{W}^{(\gamma)}}\df \sum_{i=1}^{N_{\gamma}}\sum_{j=1}^{N_\gamma}\fn{u}(\hat{\vc{x}}^{(\gamma,i)})\el{W}_{ij}^{(\gamma)}\fn{v}(\hat{\vc{x}}^{(\gamma,j)})
\end{equation}
in terms of the matrices $\mat{W} \in \R^{N\times N}$ and $\mat{W}^{(\gamma)} \in \R^{N_\gamma \times N_\gamma}$, respectively. These bilinear forms approximate each term of the multidimensional integration-by-parts (IBP) relation on the reference element as
\begin{equation}\label{eq:ibp}
\underbrace{\int_{\hat{\ug{\Omega}}}\fn{u}(\hat{\vc{x}})\pdv{\fn{v}(\hat{\vc{x}})}{\hat{x}_m}\, \dd \hat{\vc{x}}}_{\approx\, \il\fn{u},\,\pdvln{\fn{v}}{\hat{x}_m}\ir_{\mat{W}}} + \underbrace{\int_{\hat{\ug{\Omega}}}\pdv{\fn{u}(\hat{\vc{x}})}{\hat{x}_m}\fn{v}(\hat{\vc{x}})\, \dd \hat{\vc{x}}}_{\approx\, \il\pdvln{\fn{u}}{\hat{x}_m}, \,\fn{v}\ir_{\mat{W}}} = \sum_{\gamma=1}^{N_f}\underbrace{\int_{\hat{\ug{\Gamma}}^{(\gamma)}} \fn{u}(\hat{\vc{x}}) \fn{v}(\hat{\vc{x}})\hat{n}_m^{(\gamma)} \, \dd \hat{s}}_{\mathclap{\approx \, \hat{n}_m^{(\gamma)}\il\fn{u},\fn{v}\ir_{\mat{W}^{(\gamma)}}}},
\end{equation}
where we make the following assumption regarding such approximations.
\begin{assumption}\label{asm:sbp}
The IBP relation in (\ref{eq:ibp}) is satisfied exactly for all $m\in \{1,\ldots, d\}$ under the discrete inner products for arguments within the reference approximation space described in \textsection \ref{sec:approx_space}, as given by
\begin{equation}\label{eq:ibp_polynomials_discrete}
\begin{aligned}
\bigg\il\fn{u}, \pdv{\fn{v}}{\hat{x}_m}\bigg\ir_{\mat{W}}  + \bigg\il \pdv{\fn{u}}{\hat{x}_m},\fn{v}\bigg\ir_{\mat{W}} = \sum_{\gamma=1}^{N_f}\hat{n}_m^{(\gamma)}\big\il\fn{u}, \fn{v} \big\ir_{\mat{W}^{(\gamma)}}, \quad \forall \, \fn{u},\fn{v} \in \set{V}.
\end{aligned}
\end{equation}
\end{assumption}
Introducing an arbitrary nodal (i.e.\ Lagrange) or modal basis $\set{B} \df \{\phi^{(i)}\}_{i=1}^{N_p}$ for the space $\set{V}$, we may define the generalized Vandermonde matrices $\mat{V} \in \R^{N \times N_p}$ and $\mat{V}^{(\gamma)} \in \R^{N_\gamma \times N_p}$ with entries given by $\el{V}_{ij} = \phi^{(j)}(\hat{\vc{x}}^{(i)})$ and $\smash{\el{V}_{ij}^{(\gamma)}} = \phi^{(j)}(\hat{\vc{x}}^{(\gamma,i)})$, respectively. Referring to \textsection \ref{app:bases} as well as to the textbooks of Hesthaven and Warburton \cite[\textsection 3.1, \textsection 6.1, \textsection 10.1]{hesthaven08} and Karniadakis and Sherwin \cite[Ch.\ 3]{karniadakis_sherwin_spectral_hp_element} for details regarding the construction of such bases, we state the following additional assumption.
\begin{assumption}\label{asm:inner_prod}
The generalized Vandermonde matrix $\mat{V}$ is of rank $N_p$. Additionally, $\mat{W}$ is symmetric positive definite (SPD), as are  $\mat{W}^{(\gamma)}$ for all $\gamma \in \{1,\ldots, N_f\}$.
\end{assumption}
\begin{remark}
The condition on the rank of $\mat{V}$ generalizes the unisolvency property of $\set{S}$ assumed in \textsection \ref{sec:review_fr} to the case of $N \geq N_p$. 	Details regarding the construction of discrete inner products satisfying Assumptions  \ref{asm:sbp} and \ref{asm:inner_prod} for quadrature-based and collocation-based schemes are provided in \textsection \ref{app:quadrature} and \textsection \ref{app:collocation}, respectively.
\end{remark}
The mass matrix with entries consisting of the discrete inner products $\il \cdot, \cdot \ir_{\mat{W}}$ of all pairs of basis functions in $\set{B}$ is then characterized by the following lemma.
\begin{lemma}\label{lem:spd}
Under Assumption \ref{asm:inner_prod}, the mass matrix $\mat{M} \df \mat{V}^\T\mat{W}\mat{V}$ is SPD.
\end{lemma}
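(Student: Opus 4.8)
The plan is to verify the two defining properties of an SPD matrix in turn — symmetry and positive definiteness — drawing only on the symmetry and positive definiteness of $\mat{W}$ together with the full-column-rank condition on $\mat{V}$ supplied by Assumption~\ref{asm:inner_prod}. Symmetry will be essentially immediate, so the substance of the argument lies entirely in the positive-definiteness step, where the rank hypothesis plays its decisive role.

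First I would dispatch symmetry by transposing the definition and invoking $\mat{W}^\T = \mat{W}$, which gives $\mat{M}^\T = (\mat{V}^\T\mat{W}\mat{V})^\T = \mat{V}^\T\mat{W}^\T\mat{V} = \mat{V}^\T\mat{W}\mat{V} = \mat{M}$. Next, to establish positive definiteness, I would fix an arbitrary nonzero $\vc{a} \in \R^{N_p}$, introduce the auxiliary vector $\vc{b} \df \mat{V}\vc{a} \in \R^N$, and rewrite the associated quadratic form as
\begin{equation*}
\vc{a}^\T\mat{M}\vc{a} = \vc{a}^\T\mat{V}^\T\mat{W}\mat{V}\vc{a} = \vc{b}^\T\mat{W}\vc{b}.
\end{equation*}

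The key step — the only place where the rank hypothesis enters — is to argue that $\vc{b} \neq \vc{0}$. Since $\mat{V} \in \R^{N \times N_p}$ has rank $N_p$ with $N \geq N_p$, its columns are linearly independent and its kernel is trivial, so $\vc{a} \neq \vc{0}$ forces $\vc{b} = \mat{V}\vc{a} \neq \vc{0}$. With $\vc{b}$ nonzero, the positive definiteness of $\mat{W}$ then yields $\vc{b}^\T\mat{W}\vc{b} > 0$, whence $\vc{a}^\T\mat{M}\vc{a} > 0$ for every nonzero $\vc{a}$, as required.

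There is no genuine obstacle here, as the computation is routine; the one point that must not be glossed over is that the full column rank of $\mat{V}$ — rather than merely the positive semidefiniteness that would follow from $\mat{W}$ alone — is precisely what upgrades the conclusion from positive semidefinite to positive definite. Were $\mat{V}$ to possess a nontrivial kernel, one could exhibit a nonzero $\vc{a}$ with $\vc{b} = \vc{0}$ and hence $\vc{a}^\T\mat{M}\vc{a} = 0$, so this implication is the crux of the matter.
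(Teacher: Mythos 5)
Your proof is correct and follows essentially the same route as the paper's: symmetry from the structure of $\mat{M}$ and the symmetry of $\mat{W}$, positivity of the quadratic form from $\mat{W}$ being SPD, and definiteness from the trivial nullspace of $\mat{V}$ guaranteed by its full column rank. You have simply written out explicitly the steps the paper states in compressed form.
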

\begin{proof}
The symmetry and positivity of the corresponding quadratic form are clear from the structure of $\mat{M}$ and the fact that $\mat{W}$ is SPD, while the definiteness property follows from the fact that the only vector in the nullspace of $\mat{V}$ is the zero vector due to it being of full column rank.
\end{proof}
\begin{remark}\label{rmk:bijective}
Using the expansions $\fn{u}(\hat{\vc{x}}) = \sum_{i=1}^{N_p}\el{u}_i\phi^{(i)}(\hat{\vc{x}})$ and $\fn{v}(\hat{\vc{x}})  = \sum_{i=1}^{N_p}\el{v}_i\phi^{(i)}(\hat{\vc{x}})$, the discrete inner product of any two functions $\fn{u},\fn{v} \in \set{V}$ may be expressed as $(\fn{u},\fn{v}\ir_{\mat{W}} = \vc{u}^\T\mat{M}\vc{v}$, where $\mat{M}$ is SPD by Lemma \ref{lem:spd}, and since the mapping between any function in $\set{V}$ and its expansion coefficients is an isomorphism, the discrete inner product satisfies $\il \fv{v},\fv{v} \ir_{\mat{W}} > 0$ for all nonzero $\fn{v} \in \set{V}$. The discrete inner product is therefore a true inner product for the space $\set{V}$, although definiteness is violated for more general arguments outside of the space $\set{V}$.
\end{remark}
We also define the mass matrix $\mat{M}^{(\kappa)} \df \mat{V}^\T\mat{W}\mat{J}^{(\kappa)}\mat{V}$ corresponding to the physical element $\ug{\Omega}^{(\kappa)} \in \set{T}_h$ for future reference, where $\mat{J}^{(\kappa)} \df \textrm{diag}([\mathcal{J}^{(\kappa)}(\hat{\vc{x}}^{(1)}), \ldots,\mathcal{J}^{(\kappa)}(\hat{\vc{x}}^{(N)})]^\T)$.

\subsubsection*{Projection operators}
Associated with the discrete inner product $\il \cdot, \cdot \ir_{\mat{W}}$ is a corresponding orthogonal projection operator $\Pi$ onto the space $\set{V}$, which we introduce with the following lemma.
\begin{lemma}\label{lem:proj}
	Under Assumption \ref{asm:inner_prod}, the discrete projection $\Pi\fn{u} \in \set{V}$ of a given function $\fn{u} : \hat{\ug{\Omega}} \to \R$ satisfying $\il\fn{v}, \Pi\fn{u} - \fn{u} \ir_{\mat{W}} = 0$ for all $\fn{v} \in \set{V}$ can be represented in terms of the basis $\set{B}$ and the matrix $\mat{P} \df \mat{M}^{-1}\mat{V}^\T\mat{W}$ as $(\Pi\fn{u})(\hat{\vc{x}}) =  \sum_{i=1}^{N_p}\sum_{j=1}^{N}\el{P}_{ij}\fn{u}(\hat{\vc{x}}^{(j)})\phi^{(i)}(\hat{\vc{x}})$,
	and corresponds to an interpolation of $\fn{u}$ on the nodes in $\set{S}$ when $N = N_p$, satisfying $(\Pi\fn{u})(\hat{\vc{x}}^{(k)}) =\fn{u}(\hat{\vc{x}}^{(k)})$ for all $k \in \{1,\ldots, N\}$.
\end{lemma}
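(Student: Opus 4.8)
The plan is to convert the variational (Galerkin-orthogonality) characterization of $\Pi\fn{u}$ into a linear system for its coordinates in the basis $\set{B}$, solve that system using the invertibility of $\mat{M}$ guaranteed by Lemma \ref{lem:spd}, and then read off the stated matrix representation; the interpolation assertion will follow by specializing to the square case $N = N_p$.

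First I would parametrize the sought projection as $\Pi\fn{u} = \sum_{i=1}^{N_p}\hat{u}_i\phi^{(i)}$ with unknown coefficient vector $\vc{\hat{u}} \in \R^{N_p}$, and collect the nodal values of the (otherwise arbitrary) target function into $\vc{u}_{\set{S}} \df [\fn{u}(\hat{\vc{x}}^{(1)}),\ldots,\fn{u}(\hat{\vc{x}}^{(N)})]^\T$. The key observation is that the discrete inner product in (\ref{eq:disc_in_prod_vol}) depends on its arguments only through their nodal values, and that any $\fn{v} \in \set{V}$ with coefficients $\vc{v}$ has nodal-value vector $\mat{V}\vc{v}$ by the definition of the generalized Vandermonde matrix. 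Hence for $\fn{v} \in \set{V}$ I would write $\il\fn{v},\fn{u}\ir_{\mat{W}} = \vc{v}^\T\mat{V}^\T\mat{W}\vc{u}_{\set{S}}$ and, as in Remark \ref{rmk:bijective}, $\il\fn{v},\Pi\fn{u}\ir_{\mat{W}} = \vc{v}^\T\mat{M}\vc{\hat{u}}$ with $\mat{M} = \mat{V}^\T\mat{W}\mat{V}$.

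Next I would impose orthogonality. Substituting the two expressions into $\il\fn{v},\Pi\fn{u}-\fn{u}\ir_{\mat{W}} = 0$ gives $\vc{v}^\T(\mat{M}\vc{\hat{u}} - \mat{V}^\T\mat{W}\vc{u}_{\set{S}}) = 0$, and since this holds for every $\vc{v} \in \R^{N_p}$ the bracketed vector must vanish, yielding the discrete normal equations $\mat{M}\vc{\hat{u}} = \mat{V}^\T\mat{W}\vc{u}_{\set{S}}$. Because $\mat{M}$ is SPD (Lemma \ref{lem:spd}) and therefore invertible, this system has the unique solution $\vc{\hat{u}} = \mat{M}^{-1}\mat{V}^\T\mat{W}\vc{u}_{\set{S}} = \mat{P}\vc{u}_{\set{S}}$, which simultaneously establishes existence and uniqueness of $\Pi\fn{u}$; writing out $\hat{u}_i = \sum_{j=1}^N P_{ij}\fn{u}(\hat{\vc{x}}^{(j)})$ and substituting into the basis expansion then reproduces the claimed double-sum formula. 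For the interpolation statement I would specialize to $N = N_p$, where Assumption \ref{asm:inner_prod} makes the now-square matrix $\mat{V}$ invertible, so that $\mat{P} = (\mat{V}^\T\mat{W}\mat{V})^{-1}\mat{V}^\T\mat{W}$ collapses to $\mat{V}^{-1}$ after cancelling $\mat{W}$ and $\mat{V}^\T$; from $\mat{V}\vc{\hat{u}} = \vc{u}_{\set{S}}$ the $k$th nodal value satisfies $(\Pi\fn{u})(\hat{\vc{x}}^{(k)}) = (\mat{V}\vc{\hat{u}})_k = \fn{u}(\hat{\vc{x}}^{(k)})$.

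This argument is essentially routine linear algebra and I anticipate no genuine difficulty; the only point demanding care is the consistent bookkeeping between expansion coefficients (on which $\mat{M}$ acts) and nodal values (on which $\mat{W}$ acts), together with the observation that it is precisely the definiteness of $\il\cdot,\cdot\ir_{\mat{W}}$ on $\set{V}$ from Remark \ref{rmk:bijective} --- equivalently the invertibility of $\mat{M}$ --- that upgrades the orthogonality condition from merely constraining $\Pi\fn{u}$ to determining it uniquely.
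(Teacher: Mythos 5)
Your proposal is correct and follows essentially the same route as the paper's proof: expand $\Pi\fn{u}$ and the test function in the basis $\set{B}$ to reduce the orthogonality condition to the normal equations $\mat{M}\hat{\vc{u}} = \mat{V}^\T\mat{W}\vc{u}_{\set{S}}$, invert $\mat{M}$ via Lemma \ref{lem:spd}, and in the square case $N = N_p$ use the invertibility of $\mat{V}$ to collapse $\mat{P}$ to $\mat{V}^{-1}$ and read off the interpolation property. The only difference is that the paper states the expansion step tersely (citing Karniadakis and Sherwin) whereas you write it out explicitly; the content is identical.
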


\begin{proof}
	As with Lemma \ref{lem:sbp}, the result follows from expanding $\fn{v}$ and $\Pi\fn{u}$ in terms of $\set{B}$, as shown, for example, in Karniadakis and Sherwin \cite[Eq.\ (4.1.33)]{karniadakis_sherwin_spectral_hp_element}, and noting that the invertibility of $\mat{M}$ follows from its positive-definiteness, which was established in Lemma \ref{lem:spd}. If $\set{S}$ is unisolvent for $\set{V}$, as is implied by Assumption \ref{asm:inner_prod} when $N = N_p$, we use the invertibility of $\mat{V}$ to obtain $\mat{P} = \mat{V}^{-1}\mat{W}^{-1}\mat{V}^{-T}\mat{V}^\T\mat{W} = \mat{V}^{-1}$, and hence evaluating the projection at each node we obtain $(\Pi\fn{u})(\hat{\vc{x}}^{(k)}) = \sum_{i=1}^{N_p}\sum_{j=1}^{N_p} [\mat{V}^{-1}]_{ij} \el{V}_{ki} \fn{u}(\hat{\vc{x}}^{(j)}) = \fn{u}(\hat{\vc{x}}^{(k)})$ for all $k \in \{1,\ldots, N\}$, since $\phi_i(\hat{\vc{x}}^{(k)}) = \el{V}_{ki}$ and $\sum_{i=1}^{N_p}[\mat{V}^{-1}]_{ij}\el{V}_{ki} = \delta_{jk}$, demonstrating that $\Pi\fn{u} \in \set{V}$ interpolates  $\fn{u}$ on $\set{S}$. 
\end{proof}
While Lemma \ref{lem:proj} describes an approximation of the $L^2$ (i.e.\ Galerkin) projection on the reference element, we may also define a volume-weighted projection corresponding to the approximation of the $L^2$ projection on the physical element $\ug{\Omega}^{(\kappa)} \in \set{T}_h$ by replacing $\mat{P}$ with $\mat{P}^{(\kappa)} \df (\mat{M}^{(\kappa)})^{-1}\mat{V}^\T\mat{W}\mat{J}^{(\kappa)}$, for which we require the following assumption.
\begin{assumption}\label{asm:mass_invertible}
	The physical mass matrix $\mat{M}^{(\kappa)}$ is invertible for all $\kappa \in \{1,\ldots, K\}$.
\end{assumption}
Since the assumptions of \textsection \ref{sec:mesh} imply that the matrix $\mat{J}^{(\kappa)}$ is SPD, $\mat{W}\mat{J}^{(\kappa)}$ and $\mat{M}^{(\kappa)}$ are also SPD (and therefore invertible) under Assumption \ref{asm:inner_prod} when $\mat{W}$ is diagonal, as is the case for the quadrature-based approximations in \textsection \ref{app:quadrature}. For the collocation-based approach in \textsection \ref{app:collocation}, $\mat{W}$ is not necessarily diagonal, but since $\mat{M}^{(\kappa)}$ is equal to $\mat{W}\mat{J}^{(\kappa)}$, which is the product of two SPD matrices, Assumption \ref{asm:mass_invertible} remains satisfied. 
\begin{remark}
If the mapping is affine (i.e.\ $\mathcal{J}^{(\kappa)}$ is constant) or if $N = N_p$, it follows that $\mat{P}^{(\kappa)} = \mat{P}$, and the weighting therefore has no influence on the projection.
\end{remark}
\subsubsection*{Summation-by-parts property}
As the space $\set{V}$ is assumed in \textsection \ref{sec:approx_space} to be closed under partial differentiation, the operator $\partial/\partial\hat{x}_m :\set{V} \to \set{V}$ may be represented algebraically for any $m \in \{1,\ldots, d\}$ in terms of the basis $\set{B}$ through the matrix $\mat{D}^{(m)} \in \R^{N_p \times N_p}$ satisfying
\begin{equation}\label{eq:diff_mat}
 \pdv{\fn{v}(\hat{\vc{x}})}{\hat{x}_m} = \sum_{i=1}^{N_p}\sum_{j=1}^{N_p} \el{D}_{ij}^{(m)}v_j \phi^{(i)}(\hat{\vc{x}})
\end{equation}
for any function $\fn{v} \in \set{V}$ given by $\fn{v}(\hat{\vc{x}})  = \sum_{i=1}^{N_p}\el{v}_i\phi^{(i)}(\hat{\vc{x}})$. The relation in (\ref{eq:ibp_polynomials_discrete}) may then be represented algebraically, as established with the following lemma.
\begin{lemma}\label{lem:sbp}
Under Assumption \ref{asm:sbp}, the matrix $\mat{D}^{(m)}$ defined implicitly by (\ref{eq:diff_mat}) satisfies the SBP property
\begin{equation}\label{eq:sbp}
\mat{M}\mat{D}^{(m)} + (\mat{D}^{(m)})^\T\mat{M} = \sum_{\gamma=1}^{N_f}\hat{n}_m^{(\gamma)} (\mat{V}^{(\gamma)})^\T\mat{W}^{(\gamma)}\mat{V}^{(\gamma)}.
\end{equation}
\end{lemma}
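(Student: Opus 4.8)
The plan is to reduce the functional identity in Assumption~\ref{asm:sbp} to a purely algebraic identity among matrices by expanding both arguments of the discrete inner products in the basis $\set{B}$ and then exploiting the bijection between functions in $\set{V}$ and their expansion coefficients. First I would take arbitrary $\fn{u},\fn{v} \in \set{V}$ with expansion coefficient vectors $\vc{u},\vc{v} \in \R^{N_p}$, so that $\fn{u}(\hat{\vc{x}}) = \sum_{i=1}^{N_p}\el{u}_i\phi^{(i)}(\hat{\vc{x}})$ and likewise for $\fn{v}$, and then rewrite each of the three discrete inner products appearing in (\ref{eq:ibp_polynomials_discrete}) in matrix form.

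For the two volume terms, I would use the definition (\ref{eq:diff_mat}) of $\mat{D}^{(m)}$, which states that the expansion coefficients of $\pdvln{\fn{v}}{\hat{x}_m} \in \set{V}$ (an element of $\set{V}$ by the closure assumption of \textsection\ref{sec:approx_space}) are precisely $\mat{D}^{(m)}\vc{v}$, and similarly $\mat{D}^{(m)}\vc{u}$ for $\fn{u}$. Combined with the representation $\il\fn{u},\fn{v}\ir_{\mat{W}} = \vc{u}^\T\mat{M}\vc{v}$ from Remark~\ref{rmk:bijective}, this gives $\il\fn{u}, \pdvln{\fn{v}}{\hat{x}_m}\ir_{\mat{W}} = \vc{u}^\T\mat{M}\mat{D}^{(m)}\vc{v}$ and $\il \pdvln{\fn{u}}{\hat{x}_m}, \fn{v}\ir_{\mat{W}} = \vc{u}^\T(\mat{D}^{(m)})^\T\mat{M}\vc{v}$. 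For the facet term, I would observe that the vector of nodal values of $\fn{u}$ on the set $\set{S}^{(\gamma)}$ is $\mat{V}^{(\gamma)}\vc{u}$ by the definition of the generalized Vandermonde matrix, so that by (\ref{eq:disc_in_prod_facet}) one has $\il\fn{u},\fn{v}\ir_{\mat{W}^{(\gamma)}} = \vc{u}^\T(\mat{V}^{(\gamma)})^\T\mat{W}^{(\gamma)}\mat{V}^{(\gamma)}\vc{v}$.

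Substituting these three expressions into (\ref{eq:ibp_polynomials_discrete}) recasts the assumption as the scalar identity $\vc{u}^\T[\mat{M}\mat{D}^{(m)} + (\mat{D}^{(m)})^\T\mat{M} - \sum_{\gamma=1}^{N_f}\hat{n}_m^{(\gamma)}(\mat{V}^{(\gamma)})^\T\mat{W}^{(\gamma)}\mat{V}^{(\gamma)}]\vc{v} = 0$, which must hold for every $\fn{u},\fn{v} \in \set{V}$. The final step is to conclude that the bracketed matrix vanishes: since the coefficient vectors $\vc{u}$ and $\vc{v}$ range independently over all of $\R^{N_p}$ as $\fn{u}$ and $\fn{v}$ range over $\set{V}$ (the basis $\set{B}$ having exactly $N_p$ elements), the standard fact that $\vc{u}^\T\mat{A}\vc{v} = 0$ for all $\vc{u},\vc{v}$ forces $\mat{A} = \zeromat$ then yields (\ref{eq:sbp}).

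I expect the only real subtlety, rather than a genuine obstacle, to lie in justifying that the arguments of the discrete inner products may be taken to range over all coefficient vectors in $\R^{N_p}$: this is exactly where the bijection between $\set{V}$ and its coordinate representation established in Remark~\ref{rmk:bijective} is required, and it is what licenses passing from the bilinear identity to the matrix equality. Everything else amounts to the bookkeeping of translating the three inner-product terms into their Vandermonde- and differentiation-matrix representations.
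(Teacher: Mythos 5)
Your proof is correct and takes essentially the same route as the paper's: expand both arguments in the basis $\set{B}$, rewrite the volume inner products via $\mat{M}$ and $\mat{D}^{(m)}$ and the facet inner products via $(\mat{V}^{(\gamma)})^\T\mat{W}^{(\gamma)}\mat{V}^{(\gamma)}$, and use the arbitrariness of the coefficient vectors in $\R^{N_p}$ to pass from the bilinear identity in Assumption \ref{asm:sbp} to the matrix equality (\ref{eq:sbp}). The paper's proof (citing Chan) is a terser statement of exactly this argument, so your write-up simply supplies the bookkeeping it leaves implicit.
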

\begin{proof}
As noted by Chan \cite[Eq.\ (26)]{chan_discretely_entropy_conservative_dg_sbp_18}, the result follows directly from expanding $\fn{v}$ and $\fn{w}$ and their partial derivatives in terms of the basis $\set{B}$ and expressing the discrete inner products in matrix form as given in (\ref{eq:disc_in_prod_vol}) and (\ref{eq:disc_in_prod_facet}).
\end{proof}
\begin{remark}
	As the operators in (\ref{eq:sbp}) act on expansion coefficients corresponding to a given basis $\set{B}$, which do not necessarily correspond to nodal values, the relation is sometimes referred to as a ``modal'' SBP property (see, for example, Chen and Shu \cite{chen_shu_dgsbp_review_19}). As demonstrated in \cite[Lemma 1]{chan_discretely_entropy_conservative_dg_sbp_18}, a nodal SBP property is recovered by pre-multiplying (\ref{eq:sbp}) by $\mat{P}^\T$ and post-multiplying by $\mat{P}$, where the difference operator $\mat{V}\mat{D}^{(m)}\mat{P}$ approximating $\partial / \partial \hat{x}_m$ on the nodes in $\set{S}$ meets the conditions of Hicken \etal \cite[Definition 2.1]{hicken_mdsbp_16} to be a (nodal) multidimensional SBP operator. When $\set{B}$ is taken to be a nodal basis defined on $\tilde{\set{S}} \df \{\tilde{\hat{\vc{x}}}^{(i)}\}_{i=1}^{N_p} \subset \hat{\ug{\Omega}}$ as in \textsection \ref{app:nodal_basis}, the matrix $\mat{D}^{(m)}$ is itself a nodal SBP operator, recovering a ``staggered'' SBP formulation as described by Parsani \etal \cite{parsani_entropy_stable_staggered_grid_collocation_16} or Del Rey Fern\'andez \etal \cite{delrey_mdsbp_staggered_19} when $\tilde{\set{S}} \neq \set{S}$. 
\end{remark}
\subsection{Algebraic formulation of the discontinuous Galerkin method}\label{sec:dg_algebraic}
Given a basis $\set{B}$ for the space $\set{V}$, the components of the transformed numerical solution for a DG or FR method may be expressed as $\fn{u}_e^{(h,\kappa)}\xit  = \sum_{i=1}^{N_p}\el{u}_i^{(h,\kappa,e)}(t)\phi^{(i)}(\hat{\vc{x}})$. 
In the particular case of a DG method, replacing the $L^2$ inner products in (\ref{eq:dg}) with the corresponding discrete inner products defined in (\ref{eq:disc_in_prod_vol}) and (\ref{eq:disc_in_prod_facet}) results in a coupled system of ordinary differential equations given by
\begin{equation}\label{eq:dg_algebraic}
\begin{multlined}
\mat{M}^{(\kappa)}\dv{\vc{u}^{(h,\kappa,e)}(t)}{t}  - \sum_{m=1}^d(\mat{D}^{(m)})^\T\mat{V}^\T\mat{W}\vc{f}^{(h,\kappa,m,e)}(t)  \\ +\sum_{\gamma=1}^{N_f}(\mat{V}^{(\gamma)})^\T\mat{W}^{(\gamma)} \vc{f}^{(*,\kappa,\gamma,e)}(t)  = \vc{0}
\end{multlined}
\end{equation} 
for all $\kappa \in \{1,\ldots, K\}$, $e \in \{1,\ldots,N_{\textrm{Eq}}\}$, and $t \in (0,T)$, where the nodal values of the flux components are given as $\vc{f}^{(h,\kappa,m,e)}(t) \df [\fn{f}_e^{(h,\kappa,m)}(\hat{\vc{x}}^{(1)},t), \ldots,\fn{f}_e^{(h,\kappa,m)}(\hat{\vc{x}}^{(N)},t)]^\T$ and $\vc{f}^{(*,\kappa,\gamma,e)}(t) \df [\fn{f}_e^{(*,\kappa,\gamma)}(\hat{\vc{x}}^{(\gamma,1)},t),\ldots, \fn{f}_e^{(*,\kappa,\gamma)}(\hat{\vc{x}}^{(\gamma,N_\gamma)},t)]^\T$. Evaluating the initial data as $\vc{u}^{(0,\kappa,e)} \df [\fn{u}_e^0(\fv{X}^{(\kappa)}(\hat{\vc{x}}^{(1)})),\ldots, \fn{u}_e^0(\fv{X}^{(\kappa)}(\hat{\vc{x}}^{(N)}))]^\T$, the system in (\ref{eq:dg_algebraic}) is initialized as $\vc{u}^{(h,\kappa,e)}(0) = \mat{P}^{(\kappa)}\vc{u}^{(0,\kappa,e)}$ for all $\kappa \in \{1,\ldots,K\}$ and $e \in \{1,\ldots, N_{\textrm{Eq}}\}$ such that each component of $\fv{u}^h(\vc{x},0)$ approximates the global $L^2$ projection of the initial data $\fv{u}^0(\vc{x})$ onto the space $\set{V}_h$ given in (\ref{eq:solution_space}).

\begin{remark}
	In order to obtain $\vc{f}^{(h,\kappa,m,e)}(t)$ and $\vc{f}^{(*,\kappa,\gamma,e)}(t)$, the fluxes in (\ref{eq:transformed_flux}) and (\ref{eq:num_flux}) may be evaluated pointwise following the pre-multiplication of $\vc{u}^{(h,\kappa,e)}$ by $\mat{V}$ and $\mat{V}^{(\gamma)}$, respectively, in order to obtain the corresponding nodal values of the numerical solution. If $\set{B}$ is taken to be a nodal basis defined on $\tilde{\set{S}}$ as in \textsection \ref{app:nodal_basis}, the pre-multiplication by $\mat{V}$ may be avoided if collocation is exploited with $\set{S} = \tilde{\set{S}}$, while the pre-multiplication by $\mat{V}^{(\gamma)}$ may be avoided if $\set{S}^{(\gamma)} \subset \tilde{\set{S}}$.
\end{remark}

\subsection{Algebraic formulation of the flux reconstruction method}\label{sec:fr_algebraic}
Separating the divergence of the projected flux in (\ref{eq:fr}) from that of the correction term, defining the correction field $\fn{h}^{(\gamma,j)} \in \set{V}$ as the divergence of the corresponding correction function $\fv{g}^{(\gamma,j)}$ as in Castonguay \etal \cite{castonguay_vincent_jameson_triangular_fr_11} or Williams and Jameson \cite{williams_esfr_adv_diff_tetrahedra_13}, and replacing the interpolation operator $\opr{I}$ introduced in \textsection \ref{sec:review_fr} with the more general projection operator $\Pi$ described in Lemma \ref{lem:proj}, including within the normal flux difference defined in (\ref{eq:normal_flux_difference}), the FR scheme may be expressed as
\begin{equation}\label{eq:fr_lift}
\begin{multlined}
\pdv{\Pi(\mathcal{J}^{(\kappa)}\fn{u}_e^{(h,\kappa)}(\cdot,t))}{t} + \sum_{m=1}^d \pdv{\Pi\fn{f}_e^{(h,\kappa,m)}(\cdot,t)}{\hat{x}_m} \\ + \sum_{\gamma=1}^{N_f}\sum_{j=1}^{N_\gamma}\fn{h}^{(\gamma,j)}\Delta\fn{f}_e^{(\kappa,\gamma)}(\hat{\vc{x}}^{(\gamma,j)},t) = 0
\end{multlined}
\end{equation}
for all $\kappa \in \{1,\ldots, K\}$, $e \in \{1,\ldots, N_{\textrm{Eq}}\}$, and $t \in (0,T)$. An algebraic formulation of the FR method may then be obtained analogously to that described for the DG method in \textsection \ref{sec:dg_algebraic} by expressing all functions in $\set{V}$ appearing in (\ref{eq:fr_lift}) in terms of the basis $\set{B}$, from which we obtain
\begin{equation}\label{eq:fr_algebraic}
\begin{multlined}
\mat{P}\mat{J}^{(\kappa)}\mat{V}\dv{\vc{u}^{(h,\kappa,e)}(t)}{t}+ \sum_{m=1}^d \mat{D}^{(m)}\mat{P} \vc{f}^{(h,\kappa,m,e)}(t)  \\ +  \sum_{\gamma=1}^{N_f}\mat{L}^{(\gamma)} \qty(\vc{f}^{(*,\kappa,\gamma,e)}(t)  - \sum_{n=1}^d\hat{n}_n^{(\gamma)} \mat{V}^{(\gamma)}\mat{P}\vc{f}^{(h,\kappa,n,e)}(t)) = \vc{0}
\end{multlined}
\end{equation}
for all $\kappa \in \{1,\ldots, K\}$, $e \in \{1,\ldots,N_{\textrm{Eq}}\}$, and $t \in (0,T)$, with the initial condition applied as described for the DG method in \textsection \ref{sec:dg_algebraic}. The correction fields appearing in (\ref{eq:fr_lift}) are encoded within the lifting matrices $\mat{L}^{(\gamma)} \in \R^{N_p \times N_\gamma}$, which are defined such that
\begin{equation}\label{eq:corr_field}
\fn{h}^{(\gamma,j)}(\hat{\vc{x}}) = \sum_{i=1}^{N_p}\el{L}_{ij}^{(\gamma)} \phi^{(i)}(\hat{\vc{x}})
\end{equation}
for all $\gamma \in \{1,\ldots, N_f\}$ and $j \in \{1,\ldots, N_\gamma\}$. These operators allow for more general correction procedures than those in \cite[Eq.\ (22)]{chan_discretely_entropy_conservative_dg_sbp_18} and are analogous to the correction matrices introduced in \cite{ranocha_sbp_cpr_16} for one-dimensional collocated formulations.

\begin{remark}
The matrix $\mat{P}\mat{J}^{(\kappa)}\mat{V}$ pre-multiplying the time derivative in (\ref{eq:fr_algebraic}) may be expressed equivalently as $\mat{M}^{-1}\mat{M}^{(\kappa)}$ and is therefore invertible under Assumption \ref{asm:mass_invertible}. 
\end{remark}
\subsection{Vincent-Castonguay-Jameson-Huynh correction functions}\label{sec:correction}
To complete our algebraic formulation of the FR method in (\ref{eq:fr_algebraic}), the entries of the matrices $\mat{L}^{(\gamma)}$, or, equivalently, the expansion coefficients of the correction fields in terms of the basis $\set{B}$, must be specified. The following theorem demonstrates how such a matrix may be obtained, beginning with the fundamental assumptions associated with the VCJH family of correction functions.
\begin{theorem}\label{thm:vcjh}
Suppose that there exist correction functions $ \fv{g}^{(\gamma,j)}: \hat{\ug{\Omega}} \to \R^d$ defined for $\gamma \in \{1,\ldots,N_f\}$ and $j \in \{1,\ldots, N_\gamma\}$ such that $\fn{h}^{(\gamma,j)} \df \nabla \cdot \fv{g}^{(\gamma,j)} \in \set{V}$ and $
 \fv{g}^{(\gamma,j)}(\hat{\vc{x}}^{(\eta,i)}) \cdot \hat{\vc{n}}^{(\eta)} = \delta_{\gamma\eta}\delta_{ij}$ for all $\eta \in \{1,\ldots, N_f\}$ and $i \in \{1,\ldots, N_\eta\}$, satisfying
\begin{equation}\label{eq:vcjh}
\int_{\hat{\ug{\Omega}}} \bigg(\fv{g}^{(\gamma,j)}(\hat{\vc{x}}) \cdot \nabla\fn{v}(\hat{\vc{x}}) - \sum_{\vc{\alpha} \in \set{M}} \frac{\fn{C}({\vc{\alpha}})}{\lvert\hat{\ug{\Omega}}\rvert} \frac{\partial^{\abs{\vc{\alpha}}}\fn{V}(\hat{\vc{x}})}{\partial \hat{x}_1^{\alpha_1} \cdots \partial \hat{x}_d^{\alpha_d}}\frac{\partial^{\abs{\vc{\alpha}}}\fn{h}^{(\gamma,j)}(\hat{\vc{x}})}{\partial \hat{x}_1^{\alpha_1} \cdots \partial \hat{x}_d^{\alpha_d}}\bigg) \, \dd \hat{\vc{x}} = 0
\end{equation}
for all $ \fn{v} \in \set{V}$, where $\fn{C} : \set{M} \to \R$ defines the coefficients for each term in the sum over the finite set of multi-indices $\set{M} \subset \set{N}$ (with $\set{N}$ defined in \textsection \ref{sec:approx_space}), and $\lvert \hat{\ug{\Omega}} \rvert$ is the volume (i.e. Lebesgue measure) of the reference element. Given an arbitrary basis $\set{B}$ for $\set{V}$ and discrete inner products of sufficient accuracy such that $\il\fn{u},\fn{v}\ir_{\mat{W}} =\il\fn{u},\fn{v}\ir_{\hat{\ug{\Omega}}}  $ and $\il\fn{u},\fn{v}\ir_{\mat{W}^{(\gamma)}} = \il\fn{u},\fn{v}\ir_{\hat{\ug{\Gamma}}^{(\gamma)}}$ for all $\fn{u},\fn{v} \in \set{V}$, the correction field corresponding to $\gamma \in \{1,\ldots, N_f\}$ and $j \in \{1,\ldots, N_\gamma\}$ may then be expanded as in (\ref{eq:corr_field}) with coefficients given by $\mat{L}^{(\gamma)} \df (\mat{M}+\mat{K})^{-1}(\mat{V}^{(\gamma)})^\T\mat{W}^{(\gamma)}$,
where we define $\mat{K} \df \sum_{\vc{\alpha} \in \set{M}} \frac{\fn{C}({\vc{\alpha}})}{\lvert\hat{\ug{\Omega}}\rvert}(\mat{D}^{(\vc{\alpha})})^\T\mat{M}\mat{D}^{(\vc{\alpha})}$ with $\mat{D}^{(\vc{\alpha})} \df (\mat{D}^{(1)})^{\alpha_1} \cdots (\mat{D}^{(d)})^{\alpha_d}$, under the assumption that the coefficients $C(\vc{\alpha})$ have been chosen such that $\mat{M}+\mat{K}$ is SPD.
\end{theorem}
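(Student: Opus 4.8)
The plan is to substitute the basis expansion (\ref{eq:corr_field}) of $\fn{h}^{(\gamma,j)}$ into the defining condition (\ref{eq:vcjh}) and reduce that condition, term by term, to a linear system for the columns of $\mat{L}^{(\gamma)}$. Throughout I would write $\vc{l}^{(\gamma,j)} \in \R^{N_p}$ for the $j$-th column of $\mat{L}^{(\gamma)}$, that is, the coefficient vector of $\fn{h}^{(\gamma,j)}$, and expand the arbitrary test function as $\fn{v} = \sum_{i=1}^{N_p} v_i\phi^{(i)}$ with coefficient vector $\vc{v}$. Since (\ref{eq:vcjh}) must hold for every $\fn{v}\in\set{V}$, it is equivalent to a statement holding for every $\vc{v}\in\R^{N_p}$; hence, once each term has been brought to the form $\vc{v}^\T(\cdots)$, I can strip the arbitrary $\vc{v}$ and read off a matrix identity.

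First I would rewrite the volume term $\int_{\hat{\ug{\Omega}}}\fv{g}^{(\gamma,j)}\cdot\nabla\fn{v}\,\dd\hat{\vc{x}}$ via the divergence theorem as the facet sum $\sum_{\eta=1}^{N_f}\int_{\hat{\ug{\Gamma}}^{(\eta)}}(\fv{g}^{(\gamma,j)}\cdot\hat{\vc{n}}^{(\eta)})\fn{v}\,\dd\hat{s}$ minus $\int_{\hat{\ug{\Omega}}}(\nabla\cdot\fv{g}^{(\gamma,j)})\fn{v}\,\dd\hat{\vc{x}}$, the latter being $\il\fn{h}^{(\gamma,j)},\fn{v}\ir_{\hat{\ug{\Omega}}}$ since $\fn{h}^{(\gamma,j)} = \nabla\cdot\fv{g}^{(\gamma,j)}$. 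As both arguments lie in $\set{V}$, the accuracy hypothesis $\il\cdot,\cdot\ir_{\mat{W}} = \il\cdot,\cdot\ir_{\hat{\ug{\Omega}}}$ permits replacing this by the discrete inner product, which by Remark \ref{rmk:bijective} equals $\vc{v}^\T\mat{M}\vc{l}^{(\gamma,j)}$. The facet sum is where the boundary data enters: applying the analogous accuracy hypothesis on each facet together with the prescribed nodal values $\fv{g}^{(\gamma,j)}(\hat{\vc{x}}^{(\eta,i)})\cdot\hat{\vc{n}}^{(\eta)} = \delta_{\gamma\eta}\delta_{ij}$, each facet integral collapses to a discrete facet inner product weighted by these Kronecker deltas, so that only the $\eta = \gamma$ term survives and the sum reduces to $\vc{v}^\T(\mat{V}^{(\gamma)})^\T\mat{W}^{(\gamma)}\vc{e}_j$, with $\vc{e}_j$ the $j$-th unit vector.

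Next I would treat the penalty term. Because $\set{V}$ is closed under differentiation, each derivative $\partial^{\abs{\vc{\alpha}}}\fn{v}/\partial\hat{x}_1^{\alpha_1}\cdots\partial\hat{x}_d^{\alpha_d}$ again lies in $\set{V}$ and, by repeated application of (\ref{eq:diff_mat}), has coefficient vector $\mat{D}^{(\vc{\alpha})}\vc{v}$ for $\mat{D}^{(\vc{\alpha})}$ as defined in the statement (the fixed product of the $\mat{D}^{(m)}$ reproducing the mixed partial by symmetry of the latter on polynomials); the same holds for $\fn{h}^{(\gamma,j)}$, with coefficient vector $\mat{D}^{(\vc{\alpha})}\vc{l}^{(\gamma,j)}$. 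Each integral in the sum over $\set{M}$ is therefore a discrete inner product of two members of $\set{V}$, equal to $(\mat{D}^{(\vc{\alpha})}\vc{v})^\T\mat{M}(\mat{D}^{(\vc{\alpha})}\vc{l}^{(\gamma,j)})$, and weighting by $\fn{C}(\vc{\alpha})/\abs{\hat{\ug{\Omega}}}$ and summing yields exactly $\vc{v}^\T\mat{K}\vc{l}^{(\gamma,j)}$ with $\mat{K}$ as given.

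Collecting the three contributions, condition (\ref{eq:vcjh}) becomes $\vc{v}^\T[(\mat{V}^{(\gamma)})^\T\mat{W}^{(\gamma)}\vc{e}_j - (\mat{M}+\mat{K})\vc{l}^{(\gamma,j)}] = 0$ for all $\vc{v}$, whence $(\mat{M}+\mat{K})\vc{l}^{(\gamma,j)}$ equals the $j$-th column of $(\mat{V}^{(\gamma)})^\T\mat{W}^{(\gamma)}$; inverting $\mat{M}+\mat{K}$, which is SPD and hence invertible by the standing assumption, and assembling the columns $j = 1,\ldots,N_\gamma$ gives $\mat{L}^{(\gamma)} = (\mat{M}+\mat{K})^{-1}(\mat{V}^{(\gamma)})^\T\mat{W}^{(\gamma)}$. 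The step I expect to be the main obstacle is the facet term: one must justify that the continuous boundary integral is reproduced exactly by the discrete facet inner product, so that the nodal conditions on $\fv{g}^{(\gamma,j)}$ can be substituted. This hinges on interpreting the normal trace $\fv{g}^{(\gamma,j)}\cdot\hat{\vc{n}}^{(\eta)}$ on each facet as lying in the facet trace space of $\set{V}$, so that the accuracy hypothesis on $\mat{W}^{(\eta)}$ applies to its product with $\fn{v}$, and on confirming that its facet-nodal values are precisely $\delta_{\gamma\eta}\delta_{ij}$; the remaining manipulations are routine linear algebra.
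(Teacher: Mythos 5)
Your proposal is correct and follows essentially the same route as the paper's proof: integrate the first term of (\ref{eq:vcjh}) by parts, use the exactness hypotheses to pass to discrete inner products, collapse the facet terms via the nodal condition $\fv{g}^{(\gamma,j)}(\hat{\vc{x}}^{(\eta,i)}) \cdot \hat{\vc{n}}^{(\eta)} = \delta_{\gamma\eta}\delta_{ij}$, and read off the linear system $(\mat{M}+\mat{K})\mat{L}^{(\gamma)} = (\mat{V}^{(\gamma)})^\T\mat{W}^{(\gamma)}$, solvable since $\mat{M}+\mat{K}$ is SPD. The only difference is cosmetic—you work with coefficient vectors and strip the arbitrary test vector $\vc{v}$, while the paper tests against each basis function entrywise—and the subtlety you flag about the normal trace of $\fv{g}^{(\gamma,j)}$ lying in the facet trace space is handled at the same (implicit) level of rigor in the paper itself.
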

\begin{proof}Applying IBP to the first term of the integrand in (\ref{eq:vcjh}) and noting that the resulting integrals are $L^2$ inner products of functions in $\set{V}$ and their traces on each facet, we can use the exactness of the discrete inner products and the definition of the correction field $\fn{h}^{(\gamma,j)}$ as the divergence of the corresponding correction function $\fv{g}^{(\gamma,j)}$ to obtain
\begin{equation}\label{eq:vcjh_ibp}
\begin{aligned}
\Big\il \fn{v},\fn{h}^{(\gamma,j)} \Big\ir_{\mat{W}} &+ \sum_{\vc{\alpha} \in \set{M}} \frac{\fn{C}({\vc{\alpha}})}{\lvert\hat{\ug{\Omega}}\rvert}\bigg\il \frac{\partial^{\abs{\vc{\alpha}}}\fn{V}}{\partial \hat{x}_1^{\alpha_1} \cdots \partial \hat{x}_d^{\alpha_d}},\frac{\partial^{\abs{\vc{\alpha}}}\fn{h}^{(\gamma,j)}}{\partial \hat{x}_1^{\alpha_1} \cdots \partial \hat{x}_d^{\alpha_d}} \bigg\ir_{\mat{W}} \\ &= \sum_{\eta=1}^{N_f}\Big\il \fn{v}, \fv{g}^{(\gamma,j)} \cdot \hat{\vc{n}}^{(\eta)} \Big\ir_{\mat{W}^{(\eta)}}, \quad \forall \, \fn{v} \in \set{V}.
\end{aligned}
\end{equation}
Using the fact that all terms in (\ref{eq:vcjh_ibp}) are linear with respect to the test function $\fn{v}$ and the fact that the basis $\set{B}$ spans the space $\set{V}$, we can test with each basis function and apply the expansion to $\fn{h}^{(\gamma,j)} \in \set{V}$ in order to obtain
\begin{equation}\label{eq:vcjh_expanded}
\begin{multlined}
\sum_{i=1}^{N_p}\Bigg(\Big\il\phi^{(k)},\phi^{(i)} \Big\ir_{\mat{W}}\vspace{-0.5em} \\ +\sum_{\vc{\alpha} \in \set{M}} \frac{\fn{C}({\vc{\alpha}})}{\lvert\hat{\ug{\Omega}}\rvert}\bigg\il \frac{\partial^{\abs{\vc{\alpha}}} \phi^{(k)}}{\partial \hat{x}_1^{\alpha_1} \cdots \partial \hat{x}_d^{\alpha_d}},\frac{\partial^{\abs{\vc{\alpha}}} \phi^{(i)} }{\partial \hat{x}_1^{\alpha_1} \cdots \partial \hat{x}_d^{\alpha_d}} \bigg\ir_{\mat{W}}\Bigg) \el{L}_{ij}^{(\gamma)} \\ = \sum_{\eta=1}^{N_f}\Big\il \phi^{(k)},  \fv{g}^{(\gamma,j)} \cdot \hat{\vc{n}}^{(\eta)} \Big\ir_{\mat{W}^{(\eta)}}
\end{multlined}
\end{equation}
for all $k \in \{1,\ldots, N_p\}$. Recognizing the entries of $\mat{M}$ and $\mat{K}$ on the left-hand side and expanding the discrete inner product on the right-hand side, (\ref{eq:vcjh_expanded}) may be expressed as
\begin{equation}
\sum_{i=1}^{N_p}\qty(\el{M}_{ki} + \el{K}_{ki})\el{L}_{ij}^{(\gamma)} = \sum_{\eta=1}^{N_f}\sum_{i=1}^{N_\gamma}\sum_{l=1}^{N_\gamma}\el{V}_{ik}^{(\eta)}\el{W}_{il}^{(\eta)}\fv{g}^{(\gamma,j)}(\hat{\vc{x}}^{(\eta,l)}) \cdot \hat{\vc{n}}^{(\eta)}.
\end{equation}
Considering the above system of equations and using the property $
\fv{g}^{(\gamma,j)}(\hat{\vc{x}}^{(\eta,i)}) \cdot \hat{\vc{n}}^{(\eta)} = \delta_{\gamma\eta}\delta_{ij}$, we see that the $j^{\textrm{th}}$ column of the matrix $\mat{L}^{(\gamma)}$, containing the expansion coefficients for the corresponding correction field $\fn{h}^{(\gamma,j)} \in \set{V}$ in terms of the basis $\set{B}$, may be obtained by solving the linear system of equations given by $\sum_{i=1}^{N_p}(\el{M}_{ki} + \el{K}_{ki})\mat{L}_{ij}^{(\gamma)} = \sum_{i=1}^{N_\gamma}\el{V}_{ik}^{(\gamma)}\el{W}_{ij}^{(\gamma)}$ for all $k \in \{1,\ldots, N_p\}$, where $\mat{M}+\mat{K}$ is invertible under the assumption of positive-definiteness. As such a system is obtained from (\ref{eq:vcjh}) for all $j \in \{1,\ldots, N_\gamma\}$ and $\gamma \in \{1,\ldots, N_f\}$, the VCJH correction fields may be fully specified by defining $\mat{L}^{(\gamma)} \df (\mat{M}+\mat{K})^{-1}(\mat{V}^{(\gamma)})^\T\mat{W}^{(\gamma)}$.
\end{proof}
\begin{remark}\label{rmk:correction}
While the conditions on the correction functions in Theorem \ref{thm:vcjh} imply the existence of correction fields given by (\ref{eq:corr_field}) in terms of a particular form of $\mat{L}^{(\gamma)}$, the converse of such a statement does not hold in the general case. We therefore cannot necessarily associate a given correction field $\fn{h}^{(\gamma,j)}$ obtained from $\mat{L}^{(\gamma)}$ as in (\ref{eq:corr_field}) with a unique correction function $\fv{g}^{(\gamma,j)}$ satisfying the conditions of Theorem \ref{thm:vcjh}. In fact, correction functions satisfying such conditions have not, to the authors' knowledge, been explicitly constructed  for simplicial elements in two or more space dimensions.\footnote{Recalling the formulation in (\ref{eq:fr_lift}), as well as those appearing elsewhere in the literature (e.g.\ \cite[Eq.\ (4.35)]{castonguay_vincent_jameson_triangular_fr_11} and \cite[Eq.\ (29)]{williams_esfr_adv_diff_tetrahedra_13}) it is the correction fields $\fn{h}^{(\gamma,j)}$, and not the correction functions $\fv{g}^{(\gamma,j)}$, that appear explicitly in the implementation of a multidimensional FR scheme, and thus such a construction is not necessary.} The analysis of FR schemes based on properties of the lifting matrix, which we present in \textsection \ref{sec:analysis}, is therefore more general than that explicitly relying on properties of the correction functions, and, moreover, ensures that all mathematical objects involved in the proofs are well defined.
\end{remark}
To facilitate the analysis in \textsection \ref{sec:analysis}, the choices of $\set{M}$ and $\fn{C}$ are constrained by the following assumption.
\begin{assumption}\label{asm:k_mat}
The set of multi-indices $\set{M}$ is chosen such that $\mat{K}\mat{D}^{(m)} = \zeromat$ for all $m \in \{1,\ldots, d\}$, and the coefficients $C(\vc{\alpha})$ are chosen such that $\mat{M}+\mat{K}$ is SPD.
\end{assumption}
Conditions for Assumption \ref{asm:k_mat} to be satisfied depend on the particular choice of approximation space $\set{V}$ and are discussed further in \textsection \ref{app:k_mat}. If we begin with a set of correction functions satisfying (\ref{eq:vcjh}), the first condition in Assumption \ref{asm:k_mat} amounts to choosing $\set{M}$ such that the second term of the integrand is constant on $\hat{\ug{\Omega}}$. In such a case, we recover the fundamental assumption for energy stability for the VCJH schemes described by Vincent \etal \cite[Eqs.\ (3.31), (3.32)]{vincent_esfr_10} and Castonguay \etal \cite[Eq.\ (5.37)]{castonguay_vincent_jameson_triangular_fr_11}, which is given in the general case by
 \begin{equation}\label{eq:vcjh_constant}
 \int_{\hat{\ug{\Omega}}} \fv{g}^{(\gamma,j)}(\hat{\vc{x}}) \cdot \nabla\fn{v}(\hat{\vc{x}}) \, \dd\vc{x} - \sum_{\vc{\alpha} \in \set{M}} \fn{C}({\vc{\alpha}}) \frac{\partial^{\abs{\vc{\alpha}}}\fn{V}}{\partial \hat{x}_1^{\alpha_1} \cdots \partial \hat{x}_d^{\alpha_d}}\frac{\partial^{\abs{\vc{\alpha}}}\fn{h}^{(\gamma,j)}}{\partial \hat{x}_1^{\alpha_1} \cdots \partial \hat{x}_d^{\alpha_d}} = 0
 \end{equation}
for all $\fn{v} \in \set{V}$, where we have abused notation to treat the second term on the left-hand side as a constant.
 
\subsection{Connection between FR methods and strong-form filtered DG schemes}
Considering an FR method for which the correction fields are given as in Theorem \ref{thm:vcjh} with $\mat{K} = \zeromat$, as is obtained when (\ref{eq:vcjh}) is satisfied with $C(\vc{\alpha}) = 0$ for all $\vc{\alpha} \in \set{M}$, we may pre-multiply the resulting scheme in the form of (\ref{eq:fr_algebraic}) by $\mat{M}$ in order to obtain a strong-form DG method given by
\begin{equation}\label{eq:dg_strong}
\begin{multlined}
\mat{M}^{(\kappa)}\dv{\vc{u}^{(h,\kappa,e)}(t)}{t}+ \sum_{m=1}^d \mat{M}\mat{D}^{(m)}\mat{P} \vc{f}^{(h,\kappa,m,e)}(t)  \\ +  \sum_{\gamma=1}^{N_f}(\mat{V}^{(\gamma)})^\T\mat{W}^{(\gamma)} \qty(\vc{f}^{(*,\kappa,\gamma,e)}(t)  - \sum_{n=1}^d\hat{n}_n^{(\gamma)} \mat{V}^{(\gamma)}\mat{P}\vc{f}^{(h,\kappa,n,e)}(t)) = \vc{0}
\end{multlined}
\end{equation}
for all $\kappa \in \{1,\ldots, K\}$, $e \in \{1,\ldots, N_{\textrm{Eq}}\}$, and $t \in (0,T)$. In the above, we note that the projection is applied prior to differentiating each transformed flux component as well as prior to evaluating the normal component of the transformed flux on each facet of the reference element. 
\begin{remark}
It is also possible to compute the volume terms of a strong-form DG or FR method by differentiating the flux components in (\ref{eq:transformed_flux}) exactly using the chain rule and projecting afterwards, and to compute the facet terms by evaluating the normal component of the flux on each facet directly in terms of the numerical solution on $\set{S}^{(\gamma)}$ without first projecting.\footnote{As noted by Kopriva and Gassner \cite[\textsection 4]{kopriva_nodaldg_choosing_quadrature_weakform_10} in the context of the DGSEM-LGL, this modification to the facet terms has no effect when $N = N_p$ and $\set{S}^{(\gamma)} \subset \set{S}$ for all $\gamma \in \{1,\ldots, N_f\}$.} However, the theory in \textsection \ref{sec:analysis} does not apply to such alternative formulations, which, as discussed by Wang and Gao \cite[\textsection 3.3]{wang_gao_lcp_09} and Kopriva and Gassner \cite[\textsection 4]{kopriva_nodaldg_choosing_quadrature_weakform_10}, generally differ from those in (\ref{eq:fr_algebraic}) and (\ref{eq:dg_strong}) when applied to nonlinear or variable-coefficient problems and may therefore result in nonconservative strong-form discretizations which are not equivalent to their weak-form counterparts.
\end{remark}
For more general choices of $\mat{K}$ satisfying Assumption \ref{asm:k_mat}, it was recognized in the one-dimensional case by Allaneau and Jameson \cite{allaneau_jameson_dg_fr_11} that a strong-form linearly filtered DG method is recovered in the sense that the FR residual is obtained through pre-multiplication of the local DG residual resulting from (\ref{eq:dg_strong}) by a constant filter matrix $\mat{F}^{(\kappa)}$. Through transformation to a modal basis, it can be shown that such a filter acts only on polynomial modes of the highest degree contained within the space $\set{V}$ (see, for example, Allaneau and Jameson \cite[\textsection 3.2]{allaneau_jameson_dg_fr_11} and Williams and Jameson \cite[Appendix B]{williams_esfr_adv_diff_tetrahedra_13}). The following lemma illustrates such an equivalence in the general multidimensional case with arbitrary choices of basis and projection.
\begin{lemma}\label{lem:filtered_dg}
Provided that Assumptions \ref{asm:inner_prod}, \ref{asm:mass_invertible}, and \ref{asm:k_mat} hold, the FR method in (\ref{eq:fr_algebraic}) with $\mat{L}^{(\gamma)} \df (\mat{M}+\mat{K})^{-1}(\mat{V}^{(\gamma)})^\T\mat{W}^{(\gamma)}$ is equivalent to the strong-form DG method in (\ref{eq:dg_strong}) with the mass matrix $\mat{M}^{(\kappa)}$ replaced by $\mat{M}^{(\kappa)}(\mat{F}^{(\kappa)})^{-1}$, where the transformed filter matrix is given by $\mat{F}^{(\kappa)} \df (\mat{P}\mat{J}^{(\kappa)}\mat{V})^{-1}\mat{F}(\mat{P}\mat{J}^{(\kappa)}\mat{V})$ with $\mat{F} \df (\mat{I} + \mat{M}^{-1}\mat{K})^{-1}$.
\end{lemma}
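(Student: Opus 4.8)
The plan is to transform the FR system (\ref{eq:fr_algebraic}) into the strong-form DG system (\ref{eq:dg_strong}) by pre-multiplying the former by the matrix $\mat{M}+\mat{K}$, which is SPD---and hence invertible---under Assumption~\ref{asm:k_mat}. Since pre-multiplying an ODE system by an invertible matrix yields an equivalent system (with identical initial data, as the state vector $\vc{u}^{(h,\kappa,e)}$ and its initialization via $\mat{P}^{(\kappa)}$ are common to both formulations), it suffices to show that this operation recovers (\ref{eq:dg_strong}) up to a modification of the time-derivative coefficient matrix, and then to identify that modified coefficient with $\mat{M}^{(\kappa)}(\mat{F}^{(\kappa)})^{-1}$.

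Carrying out the pre-multiplication, the lifting matrix $\mat{L}^{(\gamma)} \df (\mat{M}+\mat{K})^{-1}(\mat{V}^{(\gamma)})^\T\mat{W}^{(\gamma)}$ reduces to $(\mat{V}^{(\gamma)})^\T\mat{W}^{(\gamma)}$, so the facet term becomes precisely that appearing in (\ref{eq:dg_strong}). The essential step is the treatment of the volume-flux term: the factor $(\mat{M}+\mat{K})\mat{D}^{(m)}\mat{P}$ collapses to $\mat{M}\mat{D}^{(m)}\mat{P}$ because the first condition of Assumption~\ref{asm:k_mat} gives $\mat{K}\mat{D}^{(m)} = \zeromat$ for every $m$. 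Thus every residual term is brought into exact agreement with (\ref{eq:dg_strong}), and the only alteration relative to that formulation is that the time derivative is now pre-multiplied by $(\mat{M}+\mat{K})\mat{P}\mat{J}^{(\kappa)}\mat{V}$ in place of $\mat{M}^{(\kappa)}$.

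It then remains to verify the algebraic identity $(\mat{M}+\mat{K})\mat{P}\mat{J}^{(\kappa)}\mat{V} = \mat{M}^{(\kappa)}(\mat{F}^{(\kappa)})^{-1}$. I would invoke the relation $\mat{P}\mat{J}^{(\kappa)}\mat{V} = \mat{M}^{-1}\mat{M}^{(\kappa)}$ (from the remark following (\ref{eq:corr_field}), itself a consequence of $\mat{P} = \mat{M}^{-1}\mat{V}^\T\mat{W}$ in Lemma~\ref{lem:proj}) together with the factorization $\mat{M}+\mat{K} = \mat{M}(\mat{I}+\mat{M}^{-1}\mat{K}) = \mat{M}\mat{F}^{-1}$, so that the left-hand side reads $\mat{M}\mat{F}^{-1}\mat{P}\mat{J}^{(\kappa)}\mat{V}$. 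Expanding the right-hand side using the similarity definition of $\mat{F}^{(\kappa)}$ gives $\mat{M}^{(\kappa)}(\mat{F}^{(\kappa)})^{-1} = \mat{M}^{(\kappa)}(\mat{P}\mat{J}^{(\kappa)}\mat{V})^{-1}\mat{F}^{-1}(\mat{P}\mat{J}^{(\kappa)}\mat{V})$, and the cancellation $\mat{M}^{(\kappa)}(\mat{P}\mat{J}^{(\kappa)}\mat{V})^{-1} = \mat{M}$ closes the identity. The main points requiring care are the well-definedness and invertibility of the various factors: $\mat{F}$ exists because $\mat{I}+\mat{M}^{-1}\mat{K} = \mat{M}^{-1}(\mat{M}+\mat{K})$ is a product of invertible matrices, while $\mat{F}^{(\kappa)}$ is a genuine similarity transform thanks to the invertibility of $\mat{P}\mat{J}^{(\kappa)}\mat{V} = \mat{M}^{-1}\mat{M}^{(\kappa)}$ guaranteed by Lemma~\ref{lem:spd} and Assumption~\ref{asm:mass_invertible}. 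No genuine obstacle arises beyond bookkeeping these inverses; the conceptual content lies entirely in recognizing that $\mat{K}\mat{D}^{(m)} = \zeromat$ confines the filtering effect of $\mat{K}$ to the time-derivative term.
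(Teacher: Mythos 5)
Your proposal is correct and follows essentially the same route as the paper's proof: pre-multiplication of (\ref{eq:fr_algebraic}) by $\mat{M}+\mat{K}$, use of $\mat{K}\mat{D}^{(m)} = \zeromat$ from Assumption \ref{asm:k_mat} to eliminate the extra volume term, and the identity $(\mat{M}+\mat{K})(\mat{P}\mat{J}^{(\kappa)}\mat{V}) = \mat{M}\mat{F}^{-1}(\mat{P}\mat{J}^{(\kappa)}\mat{V}) = \mat{M}^{(\kappa)}(\mat{F}^{(\kappa)})^{-1}$ to identify the modified mass matrix. Your additional bookkeeping of the invertibility of $\mat{M}+\mat{K}$, $\mat{F}$, and $\mat{P}\mat{J}^{(\kappa)}\mat{V} = \mat{M}^{-1}\mat{M}^{(\kappa)}$ is sound and simply makes explicit what the paper leaves implicit.
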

\begin{proof}
Pre-multiplying (\ref{eq:fr_algebraic}) by $\mat{M}+\mat{K}$ and simplifying the first term by using $\mat{M}+\mat{K} = \mat{M}\mat{F}^{-1}$ and $\mat{M}^{(\kappa)} = \mat{M}(\mat{P}\mat{J}^{(\kappa)}\mat{V})$ to obtain $(\mat{M}+\mat{K})(\mat{P}\mat{J}^{(\kappa)}\mat{V}) = \mat{M}^{(\kappa)}(\mat{F}^{(\kappa)})^{-1}$, the formulation may be expressed equivalently as
\begin{equation}\label{eq:fr_premultiply}
\begin{multlined}
\mat{M}^{(\kappa)}(\mat{F}^{(\kappa)})^{-1}\dv{\vc{u}^{(h,\kappa,e)}(t)}{t} + \sum_{m=1}^d\mat{M}\mat{D}^{(m)}\mat{P} \vc{f}^{(h,\kappa,m,e)}(t)\\ + \sum_{m=1}^d\mat{K}\mat{D}^{(m)}\mat{P} \vc{f}^{(h,\kappa,m,e)}(t)  \\ +  \sum_{\gamma=1}^{N_f}(\mat{V}^{(\gamma)})^\T\mat{W}^{(\gamma)} \qty(\vc{f}^{(*,\kappa,\gamma,e)}(t)  - \sum_{n=1}^d\hat{n}_n^{(\gamma)} \mat{V}^{(\gamma)}\mat{P}\vc{f}^{(h,\kappa,n,e)}(t)) = \vc{0}.
\end{multlined}
\end{equation}
Noting that the third term  of (\ref{eq:fr_premultiply}) vanishes when $\mat{K}\mat{D}^{(m)} = \zeromat$ for all $m \in \{1,\ldots,d\}$, as is the case for methods satisfying Assumption \ref{asm:k_mat}, we therefore recover a filtered strong-form DG method differing from the unfiltered scheme in (\ref{eq:dg_strong}) only by the mass matrix.
\end{proof}
\begin{remark}
The above result is distinguished from that in Zwanenburg and Nadarajah \cite{zwanenburg_esfr_filtered_dg_16} due to the application of the filter to the entire semi-discrete residual, as opposed to only the facet terms.
\end{remark}
\section{Theoretical analysis}\label{sec:analysis}
\subsection{Equivalence between strong and weak forms}\label{sec:equiv}
Just as IBP is essential to obtaining the weak form of a partial differential equation from the strong form, the SBP property in (\ref{eq:sbp}) may be used analogously to transform a strong-form discretization into a weak-form discretization, and vice versa (as discussed in \cite[\textsection 8.1]{delrey_sbp_sat_review_14}). Although not relying explicitly on the matrix form of the SBP property in (\ref{eq:sbp}), Kopriva and Gassner proved in \cite{kopriva_nodaldg_choosing_quadrature_weakform_10} that for the DGSEM-LG and DGSEM-LGL on curvilinear hexahedral elements, the strong form in (\ref{eq:dg_strong}) is equivalent to the weak form in (\ref{eq:dg_algebraic}). Such an equivalence is extended to more general DG formulations with the following theorem.
\begin{theorem}\label{thm:equiv_dg}
Under Assumptions \ref{asm:sbp}, \ref{asm:inner_prod}, and \ref{asm:mass_invertible}, the strong-form DG method in (\ref{eq:dg_strong}) is equivalent to the weak-form DG method in (\ref{eq:dg_algebraic}).
\end{theorem}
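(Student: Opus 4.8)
The plan is to subtract the weak-form scheme in (\ref{eq:dg_algebraic}) from the strong-form scheme in (\ref{eq:dg_strong}) and show that the difference vanishes identically. First I would observe that the time-derivative term $\mat{M}^{(\kappa)}\dv{\vc{u}^{(h,\kappa,e)}(t)}{t}$ is common to both formulations, as is the numerical-flux facet contribution $\sum_{\gamma=1}^{N_f}(\mat{V}^{(\gamma)})^\T\mat{W}^{(\gamma)}\vc{f}^{(*,\kappa,\gamma,e)}(t)$, which reappears once the parenthesized facet term of (\ref{eq:dg_strong}) is expanded. These cancel upon subtraction, reducing the claim to a single identity between the operators acting on the volume flux data $\vc{f}^{(h,\kappa,m,e)}(t)$.

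Concretely, the two schemes coincide provided that, for each $m \in \{1,\ldots,d\}$, the weak-form volume operator $-(\mat{D}^{(m)})^\T\mat{V}^\T\mat{W}$ equals the combination $\mat{M}\mat{D}^{(m)}\mat{P} - \sum_{\gamma=1}^{N_f}\hat{n}_m^{(\gamma)}(\mat{V}^{(\gamma)})^\T\mat{W}^{(\gamma)}\mat{V}^{(\gamma)}\mat{P}$ obtained by collecting the strong-form volume operator with the correction contribution extracted from the facet term. Since the flux vectors are determined pointwise from a numerical solution that is itself arbitrary within $\set{V}$, it suffices to establish this as an equality of matrices in $\R^{N_p \times N}$, independent of the particular flux data.

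To prove the matrix identity I would use two ingredients. First, from the definition $\mat{P} \df \mat{M}^{-1}\mat{V}^\T\mat{W}$ in Lemma \ref{lem:proj}, together with the invertibility of $\mat{M}$ furnished by Lemma \ref{lem:spd} under Assumption \ref{asm:inner_prod}, I have $\mat{V}^\T\mat{W} = \mat{M}\mat{P}$, so the weak-form operator becomes $-(\mat{D}^{(m)})^\T\mat{M}\mat{P}$. Second, factoring $\mat{P}$ to the right of the strong-form expression and invoking the SBP property of Lemma \ref{lem:sbp} (valid under Assumption \ref{asm:sbp}) in the form $\mat{M}\mat{D}^{(m)} - \sum_{\gamma=1}^{N_f}\hat{n}_m^{(\gamma)}(\mat{V}^{(\gamma)})^\T\mat{W}^{(\gamma)}\mat{V}^{(\gamma)} = -(\mat{D}^{(m)})^\T\mat{M}$, I obtain $\bigl(\mat{M}\mat{D}^{(m)} - \sum_{\gamma=1}^{N_f}\hat{n}_m^{(\gamma)}(\mat{V}^{(\gamma)})^\T\mat{W}^{(\gamma)}\mat{V}^{(\gamma)}\bigr)\mat{P} = -(\mat{D}^{(m)})^\T\mat{M}\mat{P}$. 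The two sides therefore agree term by term, and the reduction is complete.

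There is no substantive obstacle here: the entire content of the argument is the SBP relation, and the rest is a direct algebraic verification. The only points requiring care are the bookkeeping steps of expanding the parenthesized facet term in (\ref{eq:dg_strong}) so that the genuine numerical-flux contribution is cleanly separated from the correction proportional to $\mat{V}^{(\gamma)}\mat{P}\vc{f}^{(h,\kappa,n,e)}(t)$, and recognizing $\mat{V}^\T\mat{W} = \mat{M}\mat{P}$ so that $\mat{P}$ may be factored to the right before the SBP property is applied. Assumption \ref{asm:mass_invertible} is not strictly needed for the equivalence of the flux terms, since the time-derivative terms are literally identical on both sides; rather, it guarantees that both semi-discrete residuals define well-posed systems of ordinary differential equations for $\vc{u}^{(h,\kappa,e)}(t)$.
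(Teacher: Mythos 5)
Your proof is correct and follows essentially the same route as the paper's: both arguments rest on applying the SBP identity $\mat{M}\mat{D}^{(m)} + (\mat{D}^{(m)})^\T\mat{M} = \sum_{\gamma=1}^{N_f}\hat{n}_m^{(\gamma)}(\mat{V}^{(\gamma)})^\T\mat{W}^{(\gamma)}\mat{V}^{(\gamma)}$ together with the relation $\mat{M}\mat{P} = \mat{V}^\T\mat{W}$, so that the facet-correction contributions cancel against the transformed volume terms; whether one subtracts the two residuals and shows the difference vanishes, or rewrites the strong form directly into the weak form as the paper does, is only a difference in bookkeeping. Your closing observation that Assumption \ref{asm:mass_invertible} is not needed for the term-by-term identity of the residuals, but only to ensure the semi-discrete systems are well posed, is also accurate.
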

\begin{proof}
Applying the SBP property in (\ref{eq:sbp}) to the matrix product $\mat{M}\mat{D}^{(m)}$ appearing in the second term of (\ref{eq:dg_strong}) for all $m \in \{1,\ldots, d\}$, we obtain
\begin{equation}\label{eq:dg_ibp}
\begin{multlined}
\mat{M}^{(\kappa)}\dv{\vc{u}^{(h,\kappa,e)}(t)}{t} \\+ \sum_{m=1}^d \qty(\sum_{\gamma=1}^{N_f}\hat{n}_m^{(\gamma)}(\mat{V}^{(\gamma)})^\T\mat{W}^{(\gamma)}\mat{V}^{(\gamma)} -(\mat{D}^{(m)})^\T\mat{M})\mat{P} \vc{f}^{(h,\kappa,m,e)}(t)  \\ +  \sum_{\gamma=1}^{N_f}(\mat{V}^{(\gamma)})^\T\mat{W}^{(\gamma)} \qty(\vc{f}^{(*,\kappa,\gamma,e)}(t)  - \sum_{n=1}^d\hat{n}_n^{(\gamma)} \mat{V}^{(\gamma)}\mat{P}\vc{f}^{(h,\kappa,n,e)}(t)) = \vc{0}.
\end{multlined}
\end{equation}
 Recognizing that the terms containing the normal component of the projected flux at each facet node cancel from the second and third terms of (\ref{eq:dg_ibp}) and noting that $\mat{M}\mat{P} = \mat{V}^\T\mat{W}$, we therefore recover the weak form in (\ref{eq:dg_algebraic}).
\end{proof}
Exploiting the connection between FR methods and filtered DG schemes established in Lemma \ref{lem:filtered_dg}, Theorem \ref{thm:equiv_dg} also implies that an algebraically equivalent weak form may be recovered from the FR method in (\ref{eq:fr_algebraic}), as is demonstrated with the following theorem.
\begin{theorem}\label{thm:fr_equiv}
Under Assumptions \ref{asm:sbp}, \ref{asm:inner_prod}, \ref{asm:mass_invertible}, and \ref{asm:k_mat}, the FR method in (\ref{eq:fr_algebraic}) with  $\mat{L}^{(\gamma)} \df (\mat{M}+\mat{K})^{-1}(\mat{V}^{(\gamma)})^\T\mat{W}^{(\gamma)}$is equivalent to
\begin{equation}\label{eq:fr_weak}
\begin{multlined}
\mat{M}^{(\kappa)}(\mat{F}^{(\kappa)})^{-1}\dv{\vc{u}^{(h,\kappa,e)}(t)}{t}  - \sum_{m=1}^d(\mat{D}^{(m)})^\T\mat{V}^\T\mat{W}\vc{f}^{(h,\kappa,m,e)}(t)  \\ +\sum_{\gamma=1}^{N_f}(\mat{V}^{(\gamma)})^\T\mat{W}^{(\gamma)} \vc{f}^{(*,\kappa,\gamma,e)}(t)  = \vc{0}
\end{multlined}
\end{equation}
for all $\kappa \in \{1,\ldots, K\}$, $e \in \{1,\ldots, N_{\textrm{Eq}}\}$, and $t \in (0,T)$, corresponding to a filtered DG scheme in weak form, with the filter matrix $\mat{F}^{(\kappa)}$ defined as in Lemma \ref{lem:filtered_dg}.
\end{theorem}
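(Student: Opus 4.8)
The plan is to obtain (\ref{eq:fr_weak}) by composing Lemma \ref{lem:filtered_dg} with the argument already used in the proof of Theorem \ref{thm:equiv_dg}. First I would invoke Lemma \ref{lem:filtered_dg} to replace the FR method in (\ref{eq:fr_algebraic}) by the equivalent strong-form DG scheme in (\ref{eq:dg_strong}) with $\mat{M}^{(\kappa)}$ replaced by $\mat{M}^{(\kappa)}(\mat{F}^{(\kappa)})^{-1}$, noting that this substitution modifies only the time-derivative term and leaves both the volume term $\sum_m \mat{M}\mat{D}^{(m)}\mat{P}\vc{f}^{(h,\kappa,m,e)}(t)$ and the facet correction term entirely unchanged.

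Next I would apply the SBP property (\ref{eq:sbp}) to the product $\mat{M}\mat{D}^{(m)}$ appearing in the volume term, exactly as in (\ref{eq:dg_ibp}). This expresses $\mat{M}\mat{D}^{(m)}$ as $\sum_{\gamma=1}^{N_f}\hat{n}_m^{(\gamma)}(\mat{V}^{(\gamma)})^\T\mat{W}^{(\gamma)}\mat{V}^{(\gamma)} - (\mat{D}^{(m)})^\T\mat{M}$, and the newly generated facet contribution involving the projected-flux normal components $\sum_n \hat{n}_n^{(\gamma)}\mat{V}^{(\gamma)}\mat{P}\vc{f}^{(h,\kappa,n,e)}(t)$ cancels precisely against the corresponding terms already present in the FR correction, just as it does in Theorem \ref{thm:equiv_dg}. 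Using $\mat{M}\mat{P} = \mat{V}^\T\mat{W}$ then converts the surviving volume term into $-(\mat{D}^{(m)})^\T\mat{V}^\T\mat{W}\vc{f}^{(h,\kappa,m,e)}(t)$, yielding (\ref{eq:fr_weak}).

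The essential observation — and the only point that requires any care — is that the filter factor $(\mat{F}^{(\kappa)})^{-1}$ multiplies exclusively the time-derivative term and plays no role in the SBP-based cancellation of the spatial terms; consequently the very same algebraic manipulation that carries the unfiltered strong form (\ref{eq:dg_strong}) into the unfiltered weak form (\ref{eq:dg_algebraic}) carries the filtered strong form into (\ref{eq:fr_weak}). I therefore do not anticipate a substantial obstacle: the result follows by composing two previously established equivalences, provided one verifies that the filter acts only on the mass-matrix term and thus commutes with the SBP transformation of the differentiation operators, which is immediate from the structure of Lemma \ref{lem:filtered_dg}.
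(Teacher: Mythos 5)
Your proposal is correct and follows exactly the paper's own proof: invoke Lemma \ref{lem:filtered_dg} to rewrite the FR scheme as a strong-form filtered DG method, then apply the SBP property to $\mat{M}\mat{D}^{(m)}$ and proceed as in Theorem \ref{thm:equiv_dg}, observing that the filter modifies only the mass-matrix term. No gaps; the argument matches the paper's reasoning step for step.
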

\begin{proof}
From Lemma \ref{lem:filtered_dg}, the FR method in (\ref{eq:fr_algebraic}) may be expressed as a strong-form filtered DG scheme given by
\begin{equation}
\begin{multlined}
\mat{M}^{(\kappa)}(\mat{F}^{(\kappa)})^{-1}\dv{\vc{u}^{(h,\kappa,e)}(t)}{t} + \sum_{m=1}^d \mat{M}\mat{D}^{(m)}\mat{P} \vc{f}^{(h,\kappa,m,e)}(t)  \\ +  \sum_{\gamma=1}^{N_f}(\mat{V}^{(\gamma)})^\T\mat{W}^{(\gamma)} \qty(\vc{f}^{(*,\kappa,\gamma,e)}(t)  - \sum_{n=1}^d\hat{n}_n^{(\gamma)} \mat{V}^{(\gamma)}\mat{P}\vc{f}^{(h,\kappa,n,e)}(t)) = \vc{0}.
\end{multlined}
\end{equation}
Applying the SBP property to $\mat{M}\mat{D}^{(m)}$ for all $m \in \{1,\ldots,d\}$ results in a formulation identical to that in (\ref{eq:dg_ibp}) with $\mat{M}^{(\kappa)}$ replaced by $\mat{M}^{(\kappa)}(\mat{F}^{(\kappa)})^{-1}$, and hence we may proceed similarly to recover the weak form in (\ref{eq:fr_weak}).
\end{proof}
\begin{remark}
Although the weak-form filtered and unfiltered DG methods in (\ref{eq:fr_weak}) and (\ref{eq:dg_algebraic}) are mathematically equivalent to the strong-form FR and DG schemes in (\ref{eq:fr_algebraic}) and (\ref{eq:dg_strong}), respectively, the weak-form discretizations have the potential for reduced computational expense relative to the corresponding strong forms due to the elimination of terms of the form $\sum_{m=1}^d\hat{n}_m^{(\gamma)}\mat{V}^{(\gamma)}\mat{P}\vc{f}^{(h,\kappa,m,e)}$ from the facet contributions to the semi-discrete residual. Assuming that the projected flux components $\mat{P}\vc{f}^{(h,\kappa,m,e)}$ are evaluated prior to computing the volume terms and that $\mat{V}^{(\gamma)}$ is brought outside of the sum, the use of the weak form leads to an elimination of $K\cdot N_\textrm{Eq} \cdot N_f$ dense matrix-vector multiplications of size $N_\gamma \times N_p$ unless $\set{B}$ is taken to be a Lagrange basis with solution nodes $\tilde{\set{S}}$ containing $\set{S}^{(\gamma)}$, in which case the interpolation/extrapolation from $\tilde{\set{S}}$ to $\set{S}^{(\gamma)}$ through pre-multiplication by $\mat{V}^{(\gamma)}$ is not required.\footnote{This is discussed by Kopriva and Gassner \cite[\textsection 7]{kopriva_nodaldg_choosing_quadrature_weakform_10} for the DGSEM-LG and DGSEM-LGL, where, unlike the DGSEM-LG, the DGSEM-LGL satisfies $\set{S}^{(\gamma)} \subset \tilde{\set{S}}$ for all $\gamma \in \{1,\ldots, N_f\}$.} 
\end{remark}
\begin{remark}
Within the present framework, the weak formulation of the VCJH family of FR methods proposed by Zwanenburg and Nadarajah \cite[Eq.\ (3.7)]{zwanenburg_esfr_filtered_dg_16} may be expressed as
\begin{equation}\label{eq:zwanenburg}
\begin{multlined}
\mat{M}^{(\kappa)}\dv{\vc{u}^{(h,\kappa,e)}(t)}{t}  - \sum_{m=1}^d(\mat{D}^{(m)})^\T\mat{V}^\T\mat{W}\vc{f}^{(h,\kappa,m,e)}(t)  \\ +\sum_{\gamma=1}^{N_f}\mat{F}^\T(\mat{V}^{(\gamma)})^\T\mat{W}^{(\gamma)} \vc{f}^{(*,\kappa,\gamma,e)}(t)  \\ + \sum_{\gamma=1}^{N_f}\sum_{m=1}^d \hat{n}_m^{(\gamma)}(\mat{I}-\mat{F}^\T)(\mat{V}^{(\gamma)})^\T\mat{W}^{(\gamma)}\mat{V}^{(\gamma)}\mat{P}\vc{f}^{(h,\kappa,m,e)}(t)  = \vc{0},
\end{multlined}
\end{equation} 
which is algebraically equivalent to the strong formulation in (\ref{eq:fr_algebraic}) under the assumptions of Theorem \ref{thm:fr_equiv}, but has a higher computational cost than the weak-form DG method in (\ref{eq:dg_algebraic}) when $\mat{K} \neq \zeromat$. This does not, however, imply that VCJH schemes with $\mat{K} \neq \zeromat$ are inherently more computationally expensive than DG methods, since by instead using the weak formulation in (\ref{eq:fr_weak}), we are, in fact, able to attain equivalent computational expense to (\ref{eq:dg_algebraic})  with general choices of $\mat{K}$ satisfying Assumption \ref{asm:k_mat}.
\end{remark}
\subsection{Conservation}\label{sec:conservation}
The proofs of conservation in this section and the proof of energy stability in \textsection \ref{sec:stability} make use of the following assumption, which ensures that the mesh is conforming, with facet nodes aligning in physical space such that the numerical flux may be evaluated pointwise without the need for additional interpolation operators or mortar elements (see, for example, Laughton \etal \cite{laughton_comparison_of_interpolation_21} for an overview and comparison of such techniques).
\begin{assumption}\label{asm:conformal}
	The mesh satisfies all assumptions in \textsection \ref{sec:mesh}, and for each pair of element indices $\kappa,\nu \in \{1,\ldots, K\}$ with $\kappa \neq \nu$ such that $\partial\ug{\Omega}^{(\kappa)} \cap \partial\ug{\Omega}^{(\nu)} \neq \emptyset$, there exist facet indices $\gamma,\eta \in \{1,\ldots, N_f\}$ such that either $\ug{\Gamma}^{(\kappa,\gamma)} = \ug{\Gamma}^{(\nu,\eta)}$, or the facets are connected via periodic boundary conditions, in which case $\ug{\Gamma}^{(\nu,\eta)}$ is the image of $\ug{\Gamma}^{(\kappa,\gamma)}$ under a pure translation, with an opposite orientation in the sense that $\vc{n}^{(\nu,\eta)} = -\vc{n}^{(\kappa,\gamma)}$. Moreover, for such corresponding facets, the nodes align up to some permutation $\tau^{(\kappa,\gamma)} : \{1,\ldots,N_\gamma\} \to \{1,\ldots,N_\gamma\}$ such that in addition to the requirement that $N_\eta = N_\gamma$, the relation
	\begin{equation}\label{eq:nodes_match}
	\fv{X}^{(\kappa)}(\hat{\vc{x}}^{(\gamma,i)}) = \fv{X}^{(\nu)}(\hat{\vc{x}}^{(\eta,\tau^{(\kappa,\gamma)}(i))})
	\end{equation}
	is satisfied for all $i \in \{1,\ldots,N_\gamma\}$, with an additional translation in the periodic case. Given the corresponding permutation matrix $\mat{T}^{(\kappa,\gamma)} \df [\vc{e}^{(\tau^{(\kappa,\gamma)}(1))}, \ldots,\vc{e}^{(\tau^{(\kappa,\gamma)}(N_\gamma))} ]^\T$, where $\vc{e}^{(i)} \in \R^{N_\gamma}$ is the $i^{\textrm{th}}$ standard basis vector, the facet operators are then assumed to satisfy
	\begin{equation}\label{eq:weights_match}
	\mat{W}^{(\eta)}\mat{J}^{(\nu,\eta)} = (\mat{T}^{(\kappa,\gamma)})^\T\mat{W}^{(\gamma)}\mat{J}^{(\kappa,\gamma)} \mat{T}^{(\kappa,\gamma)} ,
	\end{equation}
	where we have defined $\mat{J}^{(\kappa,\gamma)} \df \textrm{diag}([\mathcal{J}^{(\kappa,\gamma)}(\hat{\vc{x}}^{(\gamma,1)}), \ldots, \mathcal{J}^{(\kappa,\gamma)}(\hat{\vc{x}}^{(\gamma,N_\gamma)})]^\T)$.
\end{assumption}
The local and global conservation properties for weak-form unfiltered and filtered DG methods then follow from taking a constant test function, which is demonstrated algebraically with the following theorem.
\begin{theorem}\label{thm:conservation}
Under Assumptions \ref{asm:inner_prod}, \ref{asm:mass_invertible}, and \ref{asm:k_mat}, the weak-form unfiltered and filtered DG methods in (\ref{eq:dg_algebraic}) and (\ref{eq:fr_weak}), respectively, the latter with a filter matrix given as in Lemma \ref{lem:filtered_dg}, are locally conservative, satisfying
\begin{equation}\label{eq:local_cons}
\dv{t}\qty(\vc{1}^\T\mat{W}\mat{J}^{(\kappa)}\mat{V}\vc{u}^{(h,\kappa,e)}(t)) + \sum_{\gamma=1}^{N_f}\vc{1}^\T\mat{W}^{(\gamma)}\vc{f}^{(*,\kappa,\gamma,e)}(t) = 0
\end{equation}
for all $\kappa \in \{1,\ldots, K\}$, $e \in \{1,\ldots, N_{\textrm{Eq}}\}$, and $t \in (0,T)$, when $\set{P}_0(\hat{\ug{\Omega}}) \subseteq \set{V}$. Moreover, for a numerical flux satisfying the conservation property $\fv{f}^*(\fv{u},\fv{v},\vc{n}) =-\fv{f}^*(\fv{v},\fv{u},-\vc{n})$ for all $\fv{u},\fv{v} \in \ug{\Upsilon}$ and $\vc{n}\in \mathbb{S}^{d-1}$, the methods are globally conservative, satisfying
\begin{equation}\label{eq:global_cons}
\dv{t}\qty(\sum_{\kappa=1}^K\vc{1}^\T\mat{W}\mat{J}^{(\kappa)}\mat{V}\vc{u}^{(h,\kappa,e)}(t))\ + \ \sum_{\mathclap{(\kappa,\gamma) \in \set{K}_{\partial\ug{\Omega}} }} \ \vc{1}^\T\mat{W}^{(\gamma)}\vc{f}^{(*,\kappa,\gamma,e)}(t) = 0
\end{equation}
for all $e \in \{1,\ldots, N_{\textrm{Eq}}\}$ and $t \in (0,T)$, under Assumption \ref{asm:conformal}.
\end{theorem}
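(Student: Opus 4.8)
The plan is to establish local conservation by testing the semi-discrete residual against a constant, which in the algebraic setting amounts to pre-multiplication by a suitable coefficient vector, and then to obtain global conservation by summing the local statement over all elements and cancelling the interior and periodic interface contributions in pairs.

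First I would treat the unfiltered weak form (\ref{eq:dg_algebraic}). Since $\set{P}_0(\hat{\ug{\Omega}}) \subseteq \set{V}$, the constant function $\fn{v} \equiv 1$ lies in $\set{V}$ and admits a coefficient vector $\vc{c}$ with respect to $\set{B}$, characterized by $\mat{V}\vc{c} = \vc{1}$ and $\mat{V}^{(\gamma)}\vc{c} = \vc{1}$, since the nodal values of a constant equal one at every volume and facet node. Pre-multiplying (\ref{eq:dg_algebraic}) by $\vc{c}^\T$, the time-derivative term becomes $\vc{c}^\T\mat{M}^{(\kappa)}\dv{\vc{u}^{(h,\kappa,e)}(t)}{t} = \vc{1}^\T\mat{W}\mat{J}^{(\kappa)}\mat{V}\dv{\vc{u}^{(h,\kappa,e)}(t)}{t}$ upon using $\mat{M}^{(\kappa)} = \mat{V}^\T\mat{W}\mat{J}^{(\kappa)}\mat{V}$ together with $\mat{V}\vc{c} = \vc{1}$, which is exactly the first term of (\ref{eq:local_cons}) as the matrices are time-invariant. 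The volume flux term vanishes identically: because $\mat{D}^{(m)}$ represents $\partial/\partial\hat{x}_m$ in the basis $\set{B}$ and the derivative of a constant is zero, we have $\mat{D}^{(m)}\vc{c} = \vc{0}$, hence $\vc{c}^\T(\mat{D}^{(m)})^\T = \vc{0}^\T$ for every $m$. Finally, the facet term reduces to $\sum_{\gamma}\vc{c}^\T(\mat{V}^{(\gamma)})^\T\mat{W}^{(\gamma)}\vc{f}^{(*,\kappa,\gamma,e)} = \sum_{\gamma}\vc{1}^\T\mat{W}^{(\gamma)}\vc{f}^{(*,\kappa,\gamma,e)}$ using $\mat{V}^{(\gamma)}\vc{c} = \vc{1}$, yielding (\ref{eq:local_cons}).

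Second I would extend this to the filtered weak form (\ref{eq:fr_weak}), for which only the coefficient $\mat{M}^{(\kappa)}(\mat{F}^{(\kappa)})^{-1}$ of the time derivative differs; it therefore suffices to show that the filter leaves the constant mode untouched, i.e.\ $\vc{c}^\T\mat{M}^{(\kappa)}(\mat{F}^{(\kappa)})^{-1} = \vc{c}^\T\mat{M}^{(\kappa)}$. From the proof of Lemma \ref{lem:filtered_dg} we have $\mat{M}^{(\kappa)}(\mat{F}^{(\kappa)})^{-1} = (\mat{M}+\mat{K})\mat{P}\mat{J}^{(\kappa)}\mat{V}$, so the claim follows once $\vc{c}^\T\mat{K} = \vc{0}^\T$. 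Since $\mat{K}$ is symmetric (each summand $(\mat{D}^{(\vc{\alpha})})^\T\mat{M}\mat{D}^{(\vc{\alpha})}$ being symmetric) and, for $p \geq 1$, the constant function is the $\hat{x}_m$-derivative of a degree-one polynomial in $\set{V}$ so that $\vc{c} = \mat{D}^{(m)}\vc{d}$ lies in the column space of some $\mat{D}^{(m)}$, we obtain $\vc{c}^\T\mat{K} = \vc{d}^\T(\mat{K}\mat{D}^{(m)})^\T = \vc{0}^\T$ by the first part of Assumption \ref{asm:k_mat} (the case $p = 0$ being trivial, as then $\mat{K} = \zeromat$). Using $\mat{M}\mat{P} = \mat{V}^\T\mat{W}$, the first term again reduces to $\vc{1}^\T\mat{W}\mat{J}^{(\kappa)}\mat{V}\dv{\vc{u}^{(h,\kappa,e)}(t)}{t}$, and since the volume and facet terms of (\ref{eq:fr_weak}) coincide with those of (\ref{eq:dg_algebraic}), the remainder is identical, establishing (\ref{eq:local_cons}) for the filtered scheme.

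Finally, for global conservation I would sum (\ref{eq:local_cons}) over $\kappa \in \{1,\ldots,K\}$; the time-derivative terms assemble into the first term of (\ref{eq:global_cons}), while the facet terms become a sum over all $(\kappa,\gamma) \in \set{K}$. Splitting $\set{K} = \set{K}_{\ug{\Omega}} \cup \set{K}_{\partial\ug{\Omega}}$, I would pair each interior or periodic facet $(\kappa,\gamma)$ with its neighbour $(\nu,\eta)$ furnished by Assumption \ref{asm:conformal} and show the two contributions cancel. Writing the nodal numerical flux as $\vc{f}^{(*,\kappa,\gamma,e)} = \mat{J}^{(\kappa,\gamma)}\tilde{\vc{f}}^{(\kappa,\gamma,e)}$ with $\tilde{\vc{f}}^{(\kappa,\gamma,e)}$ the vector of unscaled flux-function values, the node-matching relation (\ref{eq:nodes_match}) identifies the external state used on one facet with the interior trace of the neighbour, whence the conservation property $\fv{f}^*(\fv{u},\fv{v},\vc{n}) = -\fv{f}^*(\fv{v},\fv{u},-\vc{n})$ together with $\vc{n}^{(\nu,\eta)} = -\vc{n}^{(\kappa,\gamma)}$ yields $\mat{T}^{(\kappa,\gamma)}\tilde{\vc{f}}^{(\nu,\eta,e)} = -\tilde{\vc{f}}^{(\kappa,\gamma,e)}$. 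Substituting the weight-matching relation (\ref{eq:weights_match}) and using the permutation identities $\mat{T}^{(\kappa,\gamma)}\vc{1} = \vc{1}$ and $\mat{T}^{(\kappa,\gamma)}(\mat{T}^{(\kappa,\gamma)})^\T = \mat{I}$ then gives $\vc{1}^\T\mat{W}^{(\eta)}\vc{f}^{(*,\nu,\eta,e)} = -\vc{1}^\T\mat{W}^{(\gamma)}\vc{f}^{(*,\kappa,\gamma,e)}$, so every interior and periodic pair cancels and only the $\set{K}_{\partial\ug{\Omega}}$ terms survive, which is (\ref{eq:global_cons}). I expect the main obstacle to be precisely this last step: keeping the permutation $\mat{T}^{(\kappa,\gamma)}$, the facet-node ordering, and the Jacobian and weight scalings consistent so that (\ref{eq:weights_match}) and the flux conservation property combine cleanly; by contrast, the local identities are essentially immediate once the constant test mode, and its invariance under the filter, are handled.
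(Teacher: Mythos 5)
Your proof is correct and follows essentially the same route as the paper's: both test the weak form against the coefficient vector of the constant function (your $\vc{c}$ is the paper's $\mat{P}\vc{1}$), use $\mat{D}^{(m)}\vc{c} = \vc{0}$ and $\mat{V}\vc{c} = \mat{V}^{(\gamma)}\vc{c} = \vc{1}$ to obtain (\ref{eq:local_cons}), and then sum over elements, pairing interior and periodic facets via Assumption \ref{asm:conformal}, the weight-matching relation (\ref{eq:weights_match}), the identity $\mat{T}^{(\kappa,\gamma)}\vc{1} = \vc{1}$, and the flux conservation property so that shared-interface contributions cancel and only the $\set{K}_{\partial\ug{\Omega}}$ terms survive. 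The one minor difference is how the filter term is eliminated: the paper asserts $\mat{K}(\mat{P}\vc{1}) = \vc{0}$ from the explicit structure of $\mat{K}$ in Theorem \ref{thm:vcjh}, whereas you derive $\vc{c}^\T\mat{K} = \vc{0}^\T$ from the symmetry of $\mat{K}$ together with $\mat{K}\mat{D}^{(m)} = \zeromat$ by writing the constant as the derivative of a linear polynomial, which is an equally valid (for $p \geq 1$) and arguably more self-contained use of Assumption \ref{asm:k_mat}.
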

\begin{proof}
Considering the filtered DG formulation in (\ref{eq:fr_weak}), which recovers the standard DG method in (\ref{eq:dg_algebraic}) when $\mat{K} = \zeromat$, we first consider a single element and pre-multiply by $(\mat{P}\vc{1})^\T$. Recalling from Lemma \ref{lem:filtered_dg} and the definition of $\mat{M}$ that $\mat{M}^{(\kappa)}(\mat{F}^{(\kappa)})^{-1} = (\mat{M}+\mat{K})(\mat{P}\mat{J}^{(\kappa)}\mat{V}) = \mat{V}^\T\mat{W}\mat{J}^{(\kappa)}\mat{V} + \mat{K}\mat{P}\mat{J}^{(\kappa)}\mat{V}$, we therefore obtain
\begin{equation}\label{eq:fr_weak_1}
\begin{multlined}
(\mat{P}\vc{1})^\T\mat{V}^\T\mat{W}\mat{J}^{(\kappa)}\mat{V}\dv{\vc{u}^{(h,\kappa,e)}(t)}{t} + (\mat{P}\vc{1})^\T\mat{K}\mat{W}\mat{V}\dv{\vc{u}^{(h,\kappa,e)}(t)}{t} \\ - \sum_{m=1}^d(\mat{P}\vc{1})^\T(\mat{D}^{(m)})^\T\mat{V}^\T\mat{W}\vc{f}^{(h,\kappa,m,e)}(t)  \\ +\sum_{\gamma=1}^{N_f}(\mat{P}\vc{1})^\T(\mat{V}^{(\gamma)})^\T\mat{W}^{(\gamma)} \vc{f}^{(*,\kappa,\gamma,e)}(t)  = \vc{0}.
\end{multlined}
\end{equation}
 Since $\set{P}_0(\hat{\ug{\Omega}}) \subseteq \set{V}$, the vector $\mat{P}\vc{1} \in \R^{N_p}$ contains the expansion coefficients for a constant function equal to unity on the reference element, which we denote as $1 \in \set{P}_0(\hat{\ug{\Omega}})$, in terms of the basis $\set{B}$. This simplifies the first and last terms on the left-hand side of (\ref{eq:fr_weak_1}), since $\mat{V}(\mat{P}\vc{1}) = \vc{1}$ and $\mat{V}^{(\gamma)}(\mat{P}\vc{1}) = \vc{1}$ for all $\gamma \in \{1,\ldots, N_\gamma\}$, and eliminates the second and third terms, since $\mat{K}(\mat{P}\vc{1}) = \vc{0}$ and $\mat{D}^{(m)}(\mat{P}\vc{1}) = \vc{0}$ for all $m \in \{1,\ldots, d\}$. We therefore recover the statement of local conservation in (\ref{eq:local_cons}). For global conservation, we sum (\ref{eq:local_cons}) over all $\kappa \in \{1,\ldots, K\}$ and split the net contribution $F_{\textrm{net}}^{(\kappa,\gamma,e)}(t) \df \vc{1}^\T\mat{W}^{(\gamma)}\vc{f}^{(*,\kappa,\gamma,e)}(t) + \vc{1}^\T\mat{W}^{(\eta)}\vc{f}^{(*,\nu,\eta,e)}(t)$ from each shared (i.e.\ interior or periodic) interface equally between the index pairs $(\kappa,\gamma), (\nu,\eta) \in \set{K}_\ug{\Omega}$ corresponding to facets $\ug{\Gamma}^{(\kappa,\gamma)}$ and $\ug{\Gamma}^{(\nu,\eta)}$ which are either coincident or connected via periodic boundary conditions, resulting in the global balance
\begin{equation}\label{eq:global_cons_sum}
\begin{multlined}
\dv{t}\qty(\sum_{\kappa=1}^K\vc{1}^\T\mat{W}\mat{J}^{(\kappa)}\mat{V}\vc{u}^{(h,\kappa,e)}(t)) + \sum_{(\kappa,\gamma) \in \set{K}_\ug{\Omega}} \frac{1}{2} F_{\textrm{net}}^{(\kappa,\gamma,e)}(t)  \\ +  \sum_{\mathclap{(\kappa,\gamma) \in \set{K}_{\partial\ug{\Omega}}}} \ \vc{1}^\T\mat{W}^{(\gamma)}\vc{f}^{(*,\kappa,\gamma,e)}(t) = \vc{0}.
\end{multlined}
\end{equation}
As (\ref{eq:weights_match}) holds under Assumption \ref{asm:conformal} and any permutation matrix $\mat{T}^{(\kappa,\gamma)}$ satisfies $\mat{T}^{(\kappa,\gamma)}\vc{1} = \vc{1}$, the contribution from each interface may be simplified as
\begin{equation}\label{eq:f_net}
\begin{aligned}
F_{\textrm{net}}^{(\kappa,\gamma,e)}(t) &=\vc{1}^\T\mat{W}^{(\gamma)} \vc{f}^{(*,\kappa,\gamma,e)}(t) \\ & \qquad\qquad +  \vc{1}^\T(\mat{T}^{(\kappa,\gamma)})^\T\mat{W}^{(\gamma)} \mat{J}^{(\kappa,\gamma)}\mat{T}^{(\kappa,\gamma)}(\mat{J}^{(\nu,\eta)})^{-1}\vc{f}^{(*,\nu,\eta,e)}(t) \\
 &= \vc{1}^\T\mat{W}^{(\gamma)}\big(\vc{f}^{(*,\kappa,\gamma,e)}(t) + \mat{J}^{(\kappa,\gamma)}\mat{T}^{(\kappa,\gamma)}(\mat{J}^{(\nu,\eta)})^{-1}\vc{f}^{(*,\nu,\eta,e)}(t)\big).
 \end{aligned}
\end{equation}
Using (\ref{eq:nodes_match}) and the fact that the outward unit normal vectors for an interior or periodic interface corresponding to $(\kappa,\gamma),(\nu,\eta) \in \set{K}_{\ug{\Omega}}$ satisfy $\vc{n}^{(\nu,\eta)} = -\vc{n}^{(\kappa,\gamma)}$, the terms in parentheses on the last line of (\ref{eq:f_net}) are given by
\begin{equation}
\vc{f}^{(*,\kappa,\gamma,e)}(t) = \mat{J}^{(\kappa,\gamma)}\mqty[\fn{f}_e^*\big(\fv{u}^{(h,\kappa)}(\hat{\vc{x}}^{(\gamma,1)},t), \fv{u}^{(h,\nu)}(\hat{\vc{x}}^{(\eta,\tau^{(\kappa,\gamma)}(1))},t),\\ \qquad\qquad\qquad\quad \vc{n}^{(\kappa,\gamma)}(\fv{X}^{(\kappa)}(\hat{\vc{x}}^{(\gamma,1)}))\big) \\ \vdots \\ \fn{f}_e^*\big(\fv{u}^{(h,\kappa)}(\hat{\vc{x}}^{(\gamma,N_\gamma)},t), \fv{u}^{(h,\nu)}(\hat{\vc{x}}^{(\eta,\tau^{(\kappa,\gamma)}(N_\gamma))},t),\\ \qquad\qquad\qquad\quad \vc{n}^{(\kappa,\gamma)}(\fv{X}^{(\kappa)}(\hat{\vc{x}}^{(\gamma,N_\gamma)}))\big)]
\end{equation}
and
\begin{equation}
\begin{aligned}
 \mat{J}^{(\kappa,\gamma)}\mat{T}^{(\kappa,\gamma)}&(\mat{J}^{(\nu,\eta)})^{-1}\vc{f}^{(*,\nu,\eta,e)}(t) \\  &=\mat{J}^{(\kappa,\gamma)}\mqty[\fn{f}_e^*\big(\fv{u}^{(h,\nu)}(\hat{\vc{x}}^{(\eta,\tau^{(\kappa,\gamma)}(1))},t),\fv{u}^{(h,\kappa)}(\hat{\vc{x}}^{(\gamma,1)},t),\\ \qquad\qquad\qquad\quad -\vc{n}^{(\kappa,\gamma)}(\fv{X}^{(\kappa)}(\hat{\vc{x}}^{(\gamma,1)}))\big) \\ \vdots \\ \fn{f}_e^*\big(\fv{u}^{(h,\nu)}(\hat{\vc{x}}^{(\eta,\tau^{(\kappa,\gamma)}(N_\gamma))},t),\fv{u}^{(h,\kappa)}(\hat{\vc{x}}^{(\gamma,N_\gamma)},t),\\ \qquad\qquad\qquad\quad -\vc{n}^{(\kappa,\gamma)}(\fv{X}^{(\kappa)}(\hat{\vc{x}}^{(\gamma,N_\gamma)}))\big)],
\end{aligned}
\end{equation}
which sum to zero as a result of the conservation property of the numerical flux. Each term of the second sum in (\ref{eq:global_cons_sum}) then vanishes, resulting in the statement of global conservation in (\ref{eq:global_cons}).
\end{proof} 
The relation in (\ref{eq:local_cons}) represents a quadrature-based approximation of the integral balance on the physical element $\ug{\Omega}^{(\kappa)} \in \set{T}_h$, which may be obtained from the differential form of the conservation law in (\ref{eq:pde}) using the divergence theorem. On the reference element, the divergence theorem may be expressed under the discrete inner products for vector fields with components belonging to the approximation space as
\begin{equation}\label{eq:disc_div_thm}
\big\il 1, \nabla \cdot \fv{v} \big\ir_{\mat{W}} = \sum_{\gamma=1}^{N_f} \big\il 1, \fv{v} \cdot \hat{\vc{n}}^{(\gamma)} \big\ir_{\mat{W}^{(\gamma)}}, \quad \forall \, \fv{v} \in \set{V}^d,
\end{equation}
which is satisfied with Assumption \ref{asm:sbp} and $\set{P}_0(\hat{\ug{\Omega}}) \subseteq \set{V}$ as sufficient but not necessary conditions.\footnote{Considering, for example, the approach outlined for the total-degree polynomial space in \textsection \ref{app:quadrature}, volume quadrature rules of degree $p-1$ or greater and facet quadrature rules of degree $p$ or greater do not meet the requirements for Assumption \ref{asm:sbp}, but nonetheless satisfy (\ref{eq:disc_div_thm}).} We now show that this property allows for the extension of Theorem \ref{thm:conservation} to strong formulations.
\begin{theorem}\label{thm:conservation_strong}
Theorem \ref{thm:conservation} is valid for the strong-form FR scheme in (\ref{eq:fr_algebraic}) with $\mat{L}^{(\gamma)} \df (\mat{M}+\mat{K})^{-1}(\mat{V}^{(\gamma)})^\T\mat{W}^{(\gamma)}$ and the strong-form DG scheme in (\ref{eq:dg_strong}), provided that the discrete divergence theorem in (\ref{eq:disc_div_thm}) is satisfied.
\end{theorem}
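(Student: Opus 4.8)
The plan is to replay the constant-test-function argument used in the proof of Theorem \ref{thm:conservation}, but with the full SBP property (which is \emph{not} assumed here) replaced by the weaker discrete divergence theorem (\ref{eq:disc_div_thm}) as the sole driver of the flux cancellation. Since the footnote makes clear that (\ref{eq:disc_div_thm}) can hold when Assumption \ref{asm:sbp} fails, I cannot simply invoke Theorem \ref{thm:equiv_dg} to reduce the strong forms to their weak counterparts; instead the essential preliminary step is to distill from (\ref{eq:disc_div_thm}) the precise matrix identity that plays the role of the constant-test-function projection of (\ref{eq:sbp}). Choosing in (\ref{eq:disc_div_thm}) a field $\fv{v} \in \set{V}^d$ whose only nonzero component is the $m$-th, writing both discrete inner products in terms of expansion coefficients as in Remark \ref{rmk:bijective}, and using $\mat{V}\mat{P}\vc{1} = \vc{1}$ and $\mat{V}^{(\gamma)}\mat{P}\vc{1} = \vc{1}$ to identify the nodal traces of the constant, one obtains, for arbitrary coefficient vectors and hence as a matrix identity, $(\mat{P}\vc{1})^\T\mat{M}\mat{D}^{(m)} = \sum_{\gamma=1}^{N_f}\hat{n}_m^{(\gamma)}\vc{1}^\T\mat{W}^{(\gamma)}\mat{V}^{(\gamma)}$ for each $m \in \{1,\ldots,d\}$. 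This is exactly what left-multiplication of (\ref{eq:sbp}) by $(\mat{P}\vc{1})^\T$ would give, now established under the strictly weaker hypothesis.

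For the strong-form DG scheme (\ref{eq:dg_strong}), I would pre-multiply by $(\mat{P}\vc{1})^\T$ and treat the three terms in turn. The time-derivative term collapses to $\dv{t}\qty(\vc{1}^\T\mat{W}\mat{J}^{(\kappa)}\mat{V}\vc{u}^{(h,\kappa,e)}(t))$ upon recalling $\mat{M}^{(\kappa)} = \mat{V}^\T\mat{W}\mat{J}^{(\kappa)}\mat{V}$ and $\mat{V}\mat{P}\vc{1} = \vc{1}$; the facet term simplifies via $(\mat{V}^{(\gamma)}\mat{P}\vc{1})^\T = \vc{1}^\T$; and the volume term is rewritten using the identity above. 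The resulting volume contribution $\sum_m\sum_\gamma\hat{n}_m^{(\gamma)}\vc{1}^\T\mat{W}^{(\gamma)}\mat{V}^{(\gamma)}\mat{P}\vc{f}^{(h,\kappa,m,e)}$ then cancels term-by-term (after relabelling $m \leftrightarrow n$) against the projected-flux correction carried in the facet term, leaving precisely the local balance (\ref{eq:local_cons}).

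For the strong-form FR scheme (\ref{eq:fr_algebraic}), I would first apply Lemma \ref{lem:filtered_dg} — legitimate since Assumptions \ref{asm:inner_prod}, \ref{asm:mass_invertible}, and \ref{asm:k_mat} are in force — to recast it as the strong-form filtered DG scheme, i.e.\ (\ref{eq:dg_strong}) with $\mat{M}^{(\kappa)}$ replaced by $\mat{M}^{(\kappa)}(\mat{F}^{(\kappa)})^{-1}$. The volume and facet terms are then identical to the DG case and reduce as above, while the time-derivative term requires $(\mat{P}\vc{1})^\T\mat{M}^{(\kappa)}(\mat{F}^{(\kappa)})^{-1} = \vc{1}^\T\mat{W}\mat{J}^{(\kappa)}\mat{V}$; using $\mat{M}^{(\kappa)}(\mat{F}^{(\kappa)})^{-1} = (\mat{M}+\mat{K})\mat{P}\mat{J}^{(\kappa)}\mat{V}$ from Lemma \ref{lem:filtered_dg} together with $\mat{K}\mat{P}\vc{1} = \vc{0}$ (valid because every multi-index in $\set{M}$ has positive order, so $\mat{D}^{(\vc{\alpha})}\mat{P}\vc{1} = \vc{0}$) reduces this to the unfiltered case, again yielding (\ref{eq:local_cons}). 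Global conservation (\ref{eq:global_cons}) then follows verbatim from the local statement by the summation-and-cancellation argument of Theorem \ref{thm:conservation}, which uses only Assumption \ref{asm:conformal} and the conservation property of the numerical flux and is insensitive to the strong-versus-weak distinction.

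I expect the main obstacle to be conceptual rather than computational: recognizing that the full SBP property is superfluous and isolating the one consequence of (\ref{eq:disc_div_thm}) — the constant-test-function identity of the first paragraph — that suffices for the flux cancellation. Once that identity is available, the remaining manipulations are routine bookkeeping, the only further subtlety being the treatment of the filter matrix in the FR case, which hinges on $\mat{K}\mat{P}\vc{1} = \vc{0}$.
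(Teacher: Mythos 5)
Your proposal is correct and follows essentially the same route as the paper: the paper likewise expresses the discrete divergence theorem (\ref{eq:disc_div_thm}) in matrix form through the constant's coefficient vector $\mat{P}\vc{1}$, pre-multiplies the FR scheme by $(\mat{P}\vc{1})^\T(\mat{M}+\mat{K})$ (the same manipulation as your detour through Lemma \ref{lem:filtered_dg}) and the DG scheme by $(\mat{P}\vc{1})^\T$, cancels the volume term against the facet flux-correction term, and then defers to the weak-form argument of Theorem \ref{thm:conservation}. The only cosmetic difference is that you justify $\mat{K}\mat{P}\vc{1}=\vc{0}$ via the positive order of the multi-indices in $\set{M}$ (a fact the paper asserts without proof); a justification needing no extra hypothesis is that $\mat{P}\vc{1}$ lies in the range of $\mat{D}^{(m)}$ whenever $\hat{x}_m\in\set{V}$, so Assumption \ref{asm:k_mat} gives $\mat{K}\mat{P}\vc{1}=\mat{K}\mat{D}^{(m)}\vc{w}=\vc{0}$ directly.
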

\begin{proof}
Expressing (\ref{eq:disc_div_thm}) in matrix form by expanding in terms of the basis $\set{B}$ to obtain $\sum_{m=1}^d(\mat{P}\vc{1})^\T\mat{M}\mat{D}^{(m)} = \sum_{\gamma=1}^{N_f}\sum_{m=1}^d\hat{n}_m^{(\gamma)} (\mat{P}\vc{1})^\T(\mat{V}^{(\gamma)})^\T \mat{W}^{(\gamma)}\mat{V}^{(\gamma)}$,
we may pre-multiply the FR method in (\ref{eq:fr_algebraic}) by $(\mat{P}\vc{1})^\T(\mat{M}+\mat{K})$, which for $\mat{K} = \zeromat$ is equivalent to pre-multiplying the strong-form DG method in (\ref{eq:dg_strong}) by $(\mat{P}\vc{1})^\T$, and use the above relation to arrive at (\ref{eq:fr_weak_1}). The remainder of the proof is therefore identical to that of Theorem \ref{thm:conservation}, and the result follows.
\end{proof}

\begin{remark}\label{rmk:cons_functional}
For periodic boundary conditions, Theorems \ref{thm:conservation} and \ref{thm:conservation_strong} imply that for any $e \in \{1,\ldots, K\}$, the functional $\opr{C}(\fn{u}_e^h(\cdot,t)) \df \sum_{\kappa=1}^K\vc{1}^\T\mat{W}\mat{J}^{(\kappa)}\mat{V}\vc{u}^{(h,\kappa,e)}(t)$, approximating the integral $\int_{\ug{\Omega}} \fn{u}_e^h\xt\, \dd \vc{x}$ is invariant over all $t \in (0,T)$. The invariance of such a quantity therefore provides a useful test for global conservation, which we employ for the numerical experiments in \textsection \ref{sec:numerical}.
\end{remark}
\begin{remark}
Following the discussion in Remark \ref{rmk:correction}, we note that the proofs of conservation in the FR literature (for example, those of Huynh \cite{huynh_FR_07} and Castonguay \etal \cite[\textsection 4.2]{castonguay_vincent_jameson_triangular_fr_11}) rely explicitly on properties of the correction functions $\fv{g}^{(\gamma,j)}$, notably that $
\fv{g}^{(\gamma,j)}(\hat{\vc{x}}^{(\eta,i)}) \cdot \hat{\vc{n}}^{(\eta)} = \delta_{\gamma\eta}\delta_{ij}$, in order to show that the normal trace of the corrected flux in physical space is continuous at all interfaces. Our proof, in contrast, is instead based only on well-defined algebraic properties of the correction fields encoded in the lifting matrices.
\end{remark}

\subsection{Energy stability}\label{sec:stability}
While the theory in \textsection \ref{sec:equiv} and \textsection \ref{sec:conservation} applies to DG and FR methods for systems in the form of (\ref{eq:pde}) on general curvilinear elements, the analysis in this section is restricted to the particular case of the constant-coefficient linear advection equation in (\ref{eq:advection}) on meshes consisting of affinely mapped polytopes. We therefore require the following assumption.
\begin{assumption}\label{asm:affine}
In addition to the assumptions of \textsection \ref{sec:mesh}, the mapping $\fv{X}^{(\kappa)}$ is  affine for all $\kappa \in \{1,\ldots, K\}$, and hence we may abuse notation to define $\mat{\mathcal{J}}^{(\kappa)} \in \R^{d\times d}$, $\mathcal{J}^{(\kappa)} \in \R_+$, $\mathcal{J}^{(\kappa,\gamma)} \in \R_+$, and $\vc{n}^{(\kappa,\gamma)} \in \mathbb{S}^{d-1}$ as taking the values of the corresponding constant functions.
\end{assumption}
Under Assumption \ref{asm:affine}, the matrix $\mat{P}\mat{J}^{(\kappa)}\mat{V}$ pre-multiplying the time derivative in (\ref{eq:fr_algebraic}) corresponds simply to a scaling by a constant factor of $\mathcal{J}^{(\kappa)}$. This ensures that the modified mass matrix $\mat{M}^{(\kappa)}(\mat{F}^{(\kappa)})^{-1}$ corresponding to a VCJH method or the equivalent filtered DG scheme is SPD under Assumption \ref{asm:k_mat}, allowing for a norm to be defined on the global solution space $\set{V}_h$, as shown with the following lemma.
\begin{lemma}\label{lem:norm}
The functional defined by $\lVert\fn{v}\rVert_{\set{V}_h} \df \sqrt{\sum_{\kappa=1}^K(\vc{v}^{(\kappa)})^\T\mat{M}^{(\kappa)}(\mat{F}^{(\kappa)})^{-1}\vc{v}^{(\kappa)}}$ for any $\fn{v} \in \set{V}_h$ given by $\fn{v}(\vc{x}) \df \bigoplus_{\kappa=1}^K \el{v}_i^{(\kappa)}\phi^{(i)}\big((\fv{X}^{(\kappa)})^{-1}(\vc{x})\big)$, with the space $\set{V}_h$ defined as in (\ref{eq:solution_space}), is a norm under Assumptions \ref{asm:k_mat} and \ref{asm:affine}.
\end{lemma}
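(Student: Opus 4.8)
The plan is to verify the three defining properties of a norm for $\lVert\cdot\rVert_{\set{V}_h}$, namely positive definiteness, absolute homogeneity, and the triangle inequality. The essential observation is that this functional is built from the bilinear form $\fn{u},\fn{v} \mapsto \sum_{\kappa=1}^K (\vc{u}^{(\kappa)})^\T\mat{M}^{(\kappa)}(\mat{F}^{(\kappa)})^{-1}\vc{v}^{(\kappa)}$, so if I can show each local matrix $\mat{M}^{(\kappa)}(\mat{F}^{(\kappa)})^{-1}$ is SPD, then this bilinear form is a genuine inner product on the coefficient vectors, and the standard argument that any inner product induces a norm applies directly (homogeneity and the triangle inequality via Cauchy--Schwarz follow automatically).

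The crux is therefore establishing that $\mat{M}^{(\kappa)}(\mat{F}^{(\kappa)})^{-1}$ is SPD under Assumptions \ref{asm:k_mat} and \ref{asm:affine}. First I would recall from Lemma \ref{lem:filtered_dg} that $\mat{M}^{(\kappa)}(\mat{F}^{(\kappa)})^{-1} = (\mat{M}+\mat{K})(\mat{P}\mat{J}^{(\kappa)}\mat{V})$. Under Assumption \ref{asm:affine} the mapping is affine, so $\mathcal{J}^{(\kappa)}$ is constant and $\mat{J}^{(\kappa)} = \mathcal{J}^{(\kappa)}\mat{I}$ with $\mathcal{J}^{(\kappa)} > 0$; as noted in the remark following Lemma \ref{lem:proj}, this gives $\mat{P}\mat{J}^{(\kappa)}\mat{V} = \mathcal{J}^{(\kappa)}\mat{P}\mat{V} = \mathcal{J}^{(\kappa)}\mat{I}$, since $\mat{P}\mat{V} = \mat{M}^{-1}\mat{V}^\T\mat{W}\mat{V} = \mat{M}^{-1}\mat{M} = \mat{I}$. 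Consequently the local matrix reduces to the clean expression $\mat{M}^{(\kappa)}(\mat{F}^{(\kappa)})^{-1} = \mathcal{J}^{(\kappa)}(\mat{M}+\mat{K})$, which is SPD because $\mat{M}+\mat{K}$ is SPD by Assumption \ref{asm:k_mat} and $\mathcal{J}^{(\kappa)} > 0$ is a positive scalar.

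With each $\mat{M}^{(\kappa)}(\mat{F}^{(\kappa)})^{-1}$ shown to be SPD, I would conclude positive definiteness of the functional: the sum under the square root is a sum of nonnegative terms, vanishing only when each $\vc{v}^{(\kappa)} = \vc{0}$, which by the isomorphism between functions in $\set{V}$ and their expansion coefficients (as in Remark \ref{rmk:bijective}) corresponds precisely to $\fn{v} = 0$ in $\set{V}_h$. The remaining properties are routine: homogeneity follows from the quadratic scaling inside the square root, and the triangle inequality follows from the Cauchy--Schwarz inequality applied to the block-diagonal inner product $\sum_{\kappa}(\vc{u}^{(\kappa)})^\T\mat{M}^{(\kappa)}(\mat{F}^{(\kappa)})^{-1}\vc{v}^{(\kappa)}$, which is a valid inner product precisely because each block is SPD.

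I expect the main obstacle to be the reduction of $\mat{M}^{(\kappa)}(\mat{F}^{(\kappa)})^{-1}$ to the manifestly SPD form $\mathcal{J}^{(\kappa)}(\mat{M}+\mat{K})$; everything downstream is the standard inner-product-induces-a-norm argument. The affine assumption is what makes this reduction possible, since for a non-constant Jacobian the product $(\mat{M}+\mat{K})\mat{P}\mat{J}^{(\kappa)}\mat{V}$ need not be symmetric, and indeed the lemma is stated only under Assumption \ref{asm:affine} for exactly this reason. I would therefore take care to invoke $\mat{P}\mat{V} = \mat{I}$ explicitly and to flag that symmetry of the resulting matrix, not merely its invertibility, is what the norm property requires.
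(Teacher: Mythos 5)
Your proposal is correct and follows essentially the same route as the paper: both reduce the claim to the fact that $\mat{M}^{(\kappa)}(\mat{F}^{(\kappa)})^{-1} = \mathcal{J}^{(\kappa)}(\mat{M}+\mat{K})$ is SPD under Assumptions \ref{asm:k_mat} and \ref{asm:affine}, invoke the coefficient-space isomorphism of Remark \ref{rmk:bijective} to get an inner product on $\set{V}_h$, and conclude via the standard inner-product-induces-a-norm argument. The only difference is that you explicitly derive the identity through $\mat{P}\mat{V} = \mat{I}$ and Lemma \ref{lem:filtered_dg}, whereas the paper asserts it directly; this is a welcome detail, not a different approach.
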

\begin{proof}
As $\set{B}_h \df \{ \phi^{(i)} \circ (\fv{X}^{(\kappa)})^{-1} : (\kappa,i) \in \{1,\ldots, K \} \times \{1,\ldots, N_p\}\}$ is a basis for the space $\set{V}_h$, expanding $\fn{v} \in \set{V}_h$ as above defines an isomorphism between the vector spaces $\set{V}_h$ and $\R^{K \cdot N_p}$. Using such an isomorphism along with the fact that the matrix $\mat{M}^{(\kappa)}(\mat{F}^{(\kappa)})^{-1} = \mathcal{J}^{(\kappa)}(\mat{M}+\mat{K})$ is SPD for every $\kappa \in \{1,\ldots, K\}$ under the present assumptions allows us to conclude, as outlined in Remark \ref{rmk:bijective} for $\il\cdot,\cdot\ir_{\mat{W}}$, that the bilinear form defined by $\il\fn{u},\fn{v}\ir_{\set{V}_h} \df \sum_{\kappa=1}^K(\vc{u}^{(\kappa)})^\T\mat{M}^{(\kappa)}(\mat{F}^{(\kappa)})^{-1}\vc{v}^{(\kappa)}$ for any $\fn{u},\fn{v} \in \set{V}_h$ given by $\fn{u}(\vc{x}) \df \bigoplus_{\kappa=1}^K \el{u}_i^{(\kappa)}\phi^{(i)}\big((\fv{X}^{(\kappa)})^{-1}(\vc{x})\big)$ and $\fn{v}(\vc{x}) \df \bigoplus_{\kappa=1}^K \el{v}_i^{(\kappa)}\phi^{(i)}\big((\fv{X}^{(\kappa)})^{-1}(\vc{x})\big)$ is an inner product for the space $\set{V}_h$. As every inner product induces a corresponding norm on the same space, the desired result then follows from expresseing $\lVert \fn{v} \rVert_{\set{V}_h}$ as $\sqrt{\il \fn{v},\fn{v}\ir_{\set{V}_h}}$.
\end{proof}

\begin{remark}
When the conditions of Theorem \ref{thm:vcjh} are met, in particular, when $\il\fn{u},\fn{v}\ir_{\mat{W}} = \il\fn{u},\fn{v}\ir_{\hat{\ug{\Omega}}}$ for all $\fn{u},\fn{v} \in \set{V}$, we accordingly obtain 
\begin{equation}\label{eq:sobolev_norm}
\begin{multlined}
\lVert\fn{v}\rVert_{\set{V}_h}^2  = \\ \sum_{\kappa=1}^K \int_{\hat{\ug{\Omega}}}\Bigg( \big(\fn{v}(\fv{X}^{(\kappa)}(\hat{\vc{x}}))\big)^2 + \sum_{\vc{\alpha} \in \set{M}} \frac{\fn{C}({\vc{\alpha}})}{\lvert\hat{\ug{\Omega}}\rvert}\bigg(\frac{\partial^{\abs{\vc{\alpha}}} \fn{v}(\fv{X}^{(\kappa)}(\hat{\vc{x}}))}{\partial \hat{x}_1^{\alpha_1} \cdots \partial \hat{x}_d^{\alpha_d}}\bigg)^2\Bigg) \mathcal{J}^{(\kappa)}(\hat{\vc{x}}) \, \dd \hat{\vc{x}},
\end{multlined}
\end{equation}
which is the broken Sobolev-type norm introduced by Jameson \cite{jameson_sd_stability_10} in the context of one-dimensional spectral-difference methods and adopted in several stability proofs for FR schemes (e.g.\ those in Vincent \etal \cite{vincent_esfr_10} and Castonguay \etal \cite{castonguay_vincent_jameson_triangular_fr_11}). The stability theory for VCJH schemes, which is recovered within the present framework as a consequence of Theorem \ref{thm:vcjh}, is therefore obtained as a special case of that developed in this section.
\end{remark}
In order for advection problems on bounded domains to be well posed in the continuous case, boundary conditions must be prescribed appropriately on portions of $\partial\ug{\Omega}$ such that only incoming waves are specified. In the discrete case, we follow a similar approach and impose boundary and interface conditions weakly in a manner consistent with the physics of the problem through an appropriate choice of numerical flux. This is analogous to the use of simultaneous approximation terms, which are penalty terms added to strong-form discretizations at boundaries and interfaces in order to obtain stable approximations of initial-boundary-value problems with SBP operators,\footnote{In the present framework, the third term of (\ref{eq:fr_algebraic}) may be viewed as a SAT, and serves the same purpose in the stability analysis.} and were introduced in the finite-difference context by Carpenter \etal \cite{carpenter_time_stable_bcs_94}, extending the penalty approach developed for spectral methods by Funaro and Gottlieb \cite{funaro_gottlieb_boundary_conditions_pseudospectral_88}.
\begin{assumption}\label{asm:advection}
For the linear advection equation in (\ref{eq:advection}), the (non-periodic) boundary $\partial\ug{\Omega}$ may be partitioned into disjoint subsets $\partial\ug{\Omega}^- \df \{\vc{x} \in \partial\ug{\Omega} : \vc{a}\cdot\vc{n}(\vc{x}) \leq 0 \}$ and $\partial\ug{\Omega}^+ \df \partial\ug{\Omega} \setminus \partial\ug{\Omega}^-$, the former on which we prescribe the boundary condition $\fn{u}\xt = \fn{b}\xt$. The index set $\set{K}_{\partial\ug{\Omega}}$ is then further partitioned into the subsets $\set{K}_{\partial\ug{\Omega}^{-}} \df \{(\kappa,\gamma) \in \set{K}_{\partial\ug{\Omega}} : \ug{\Gamma}^{(\kappa,\gamma)} \subseteq \partial\ug{\Omega}^-\}$ and $\set{K}_{\partial\ug{\Omega}^+} \df \set{K}_{\partial\ug{\Omega}} \setminus \set{K}_{\partial\ug{\Omega}^{-}}$, while the numerical flux is assumed to take the form
\begin{equation}\label{eq:num_flux_adv}
\fn{f}^*(\fn{u},\fn{v},\vc{n}) \df \frac{1}{2}\sum_{m=1}^d n_m (\fn{f}^{(m)}(\fn{u}) + \fn{f}^{(m)}(\fn{v})) - \frac{\lambda}{2}\lvert\vc{a}\cdot \vc{n}\rvert (\fn{v}-\fn{u}),
\end{equation}
where the parameter $\lambda \in \R_{\geq 0}$ takes the same value on either side of an interior interface, and is taken to be unity for any facet lying on $\partial\ug{\Omega}$. 
\end{assumption}
The numerical flux in (\ref{eq:num_flux_adv}) recovers an upwind flux for $\lambda = 1$ and a central flux for $\lambda = 0$, corresponding to upwind and symmetric SATs, respectively, in SBP terminology. We are now equipped to present the following generalized energy stability result for DG and FR schemes in strong and weak form applied to the linear advection equation, in which (as discussed, for example, in Gustafsson \etal \cite[Ch.\ 11]{gustafsson13}) we show that the discrete norm $\lVert \fn{u}^h(\cdot,t) \rVert_{\set{V}_h}$ may only increase in time due to contributions arising from the inflow boundary. 
\begin{theorem}\label{thm:stability}
Under Assumptions \ref{asm:sbp}, \ref{asm:inner_prod}, \ref{asm:k_mat}, \ref{asm:conformal}, \ref{asm:affine}, and \ref{asm:advection}, the DG method in (\ref{eq:dg_algebraic}) and the FR method in (\ref{eq:fr_algebraic}) with $\mat{L}^{(\gamma)} \df (\mat{M}+\mat{K})^{-1}(\mat{V}^{(\gamma)})^\T\mat{W}^{(\gamma)}$ applied to the linear advection equation satisfy the discrete energy balance
\begin{equation}\label{eq:energy}
\begin{multlined}
\dv{t}\qty(\sum_{\kappa=1}^K(\vc{u}^{(h,\kappa)}(t))^\T\mat{M}^{(\kappa)}(\mat{F}^{(\kappa)})^{-1}\vc{u}^{(h,\kappa)}(t))  \\ \leq  \quad \sum_{\mathclap{(\kappa,\gamma) \in \set{K}_{\partial\ug{\Omega}^-}}} \big\lvert \vc{a} \cdot \vc{n}^{(\kappa,\gamma)} \big\rvert(\vc{b}^{(\kappa,\gamma)}(t))^\T\mat{W}^{(\gamma)}\mat{J}^{(\kappa,\gamma)}\vc{b}^{(\kappa,\gamma)}(t)
\end{multlined}
\end{equation}
for all $t \in (0,T)$, where $\vc{b}^{(\kappa,\gamma)}(t) \df [\fn{b}(\fv{X}^{(\kappa)}(\hat{\vc{x}}^{(\gamma,1)},t)),\ldots,\fn{b}(\fv{X}^{(\kappa)}(\hat{\vc{x}}^{(\gamma,N_\gamma)},t))]^\T$ for $(\kappa,\gamma) \in \set{K}_{\partial\ug{\Omega}^-}$, implying that the methods are energy stable in the norm $\lVert \cdot \rVert_{\set{V}_h}$.
\end{theorem}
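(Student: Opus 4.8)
The plan is to treat the DG and FR schemes simultaneously by invoking Theorem \ref{thm:fr_equiv}, which shows that the FR method in (\ref{eq:fr_algebraic}) is equivalent to the weak form (\ref{eq:fr_weak}), the latter reducing to the DG method (\ref{eq:dg_algebraic}) when $\mat{K} = \zeromat$. Since linear advection has a single equation, I would drop the index $e$ and write the transformed flux using the contravariant advection velocity $\hat{\vc{a}}^{(\kappa)} \df \mathcal{J}^{(\kappa)}(\mat{\mathcal{J}}^{(\kappa)})^{-1}\vc{a}$, which is constant on each element by Assumption \ref{asm:affine}, so that $\vc{f}^{(h,\kappa,m)}(t) = \hat{a}_m^{(\kappa)}\mat{V}\vc{u}^{(h,\kappa)}(t)$. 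The energy method then proceeds by pre-multiplying (\ref{eq:fr_weak}) by $2(\vc{u}^{(h,\kappa)})^\T$ and summing over $\kappa$; because $\mat{M}^{(\kappa)}(\mat{F}^{(\kappa)})^{-1} = \mathcal{J}^{(\kappa)}(\mat{M}+\mat{K})$ is symmetric under Assumptions \ref{asm:k_mat} and \ref{asm:affine} (as exploited in Lemma \ref{lem:norm}), the time-derivative term collapses to $\dv{t}\sum_\kappa (\vc{u}^{(h,\kappa)})^\T\mat{M}^{(\kappa)}(\mat{F}^{(\kappa)})^{-1}\vc{u}^{(h,\kappa)}$, which is precisely the quantity differentiated on the left of (\ref{eq:energy}).

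For the volume contribution I would apply the SBP property (\ref{eq:sbp}). Each volume term takes the form $2\hat{a}_m^{(\kappa)}(\vc{u}^{(h,\kappa)})^\T(\mat{D}^{(m)})^\T\mat{M}\vc{u}^{(h,\kappa)}$, and since this scalar equals its own transpose it may be symmetrized as $\hat{a}_m^{(\kappa)}(\vc{u}^{(h,\kappa)})^\T[\mat{M}\mat{D}^{(m)} + (\mat{D}^{(m)})^\T\mat{M}]\vc{u}^{(h,\kappa)}$, whereupon (\ref{eq:sbp}) converts it entirely into facet terms $\sum_\gamma (\hat{\vc{a}}^{(\kappa)}\cdot\hat{\vc{n}}^{(\gamma)})(\mat{V}^{(\gamma)}\vc{u}^{(h,\kappa)})^\T\mat{W}^{(\gamma)}(\mat{V}^{(\gamma)}\vc{u}^{(h,\kappa)})$. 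A short calculation using the Nanson relation (\ref{eq:nanson}) establishes the geometric identity $\hat{\vc{a}}^{(\kappa)}\cdot\hat{\vc{n}}^{(\gamma)} = \mathcal{J}^{(\kappa,\gamma)}(\vc{a}\cdot\vc{n}^{(\kappa,\gamma)})$, expressing the transformed normal speed in physical-space terms. Substituting the numerical flux (\ref{eq:num_flux_adv}) into the facet terms of (\ref{eq:fr_weak}) and combining with the volume-derived facet terms, the net contribution of each facet reduces, writing $s \df \vc{a}\cdot\vc{n}^{(\kappa,\gamma)}$, $\vc{v} \df \mat{V}^{(\gamma)}\vc{u}^{(h,\kappa)}$ for the interior trace and $\vc{w}$ for the exterior trace, to $-\mathcal{J}^{(\kappa,\gamma)}s\,\vc{v}^\T\mat{W}^{(\gamma)}\vc{w} + \lambda\mathcal{J}^{(\kappa,\gamma)}\lvert s\rvert\,\vc{v}^\T\mat{W}^{(\gamma)}(\vc{w}-\vc{v})$.

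The remaining task is to sum these facet contributions, which I would organize exactly as in the conservation proof (Theorem \ref{thm:conservation}), splitting into interior or periodic pairs and boundary facets. For a shared interface between $(\kappa,\gamma)$ and $(\nu,\eta) \in \set{K}_\ug{\Omega}$, Assumption \ref{asm:conformal} supplies the opposite normals $s^{(\nu,\eta)} = -s$, the node-matching permutation (so the exterior trace of one side is the permuted interior trace $\vc{v}'$ of the other, $\vc{w} = \mat{T}^{(\kappa,\gamma)}\vc{v}'$), and the weight-matching relation (\ref{eq:weights_match}), which for affine facets reads $\mathcal{J}^{(\nu,\eta)}\mat{W}^{(\eta)} = \mathcal{J}^{(\kappa,\gamma)}(\mat{T}^{(\kappa,\gamma)})^\T\mat{W}^{(\gamma)}\mat{T}^{(\kappa,\gamma)}$. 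Using the orthogonality of $\mat{T}^{(\kappa,\gamma)}$, I would show that the central ($\lambda$-independent) parts from the two sides cancel exactly, while the penalty parts combine into the single sign-definite jump term $-\lambda\mathcal{J}^{(\kappa,\gamma)}\lvert s\rvert(\vc{v}-\vc{w})^\T\mat{W}^{(\gamma)}(\vc{v}-\vc{w}) \leq 0$. On outflow facets in $\set{K}_{\partial\ug{\Omega}^+}$ (where $s > 0$) the contribution simplifies, independently of $\vc{w}$, to $-\mathcal{J}^{(\kappa,\gamma)}s\,\vc{v}^\T\mat{W}^{(\gamma)}\vc{v} \leq 0$, while on inflow facets in $\set{K}_{\partial\ug{\Omega}^-}$ (where $s \leq 0$, $\lambda = 1$, and $\vc{w} = \vc{b}^{(\kappa,\gamma)}$) completing the square yields $-\mathcal{J}^{(\kappa,\gamma)}\lvert s\rvert(\vc{v}-\vc{b}^{(\kappa,\gamma)})^\T\mat{W}^{(\gamma)}(\vc{v}-\vc{b}^{(\kappa,\gamma)}) + \mathcal{J}^{(\kappa,\gamma)}\lvert s\rvert(\vc{b}^{(\kappa,\gamma)})^\T\mat{W}^{(\gamma)}\vc{b}^{(\kappa,\gamma)}$.

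Discarding every non-positive term and recognizing that $\mathcal{J}^{(\kappa,\gamma)}\mat{W}^{(\gamma)} = \mat{W}^{(\gamma)}\mat{J}^{(\kappa,\gamma)}$ for affine facets then gives precisely the right-hand side of (\ref{eq:energy}), with energy stability in $\lVert\cdot\rVert_{\set{V}_h}$ following since that functional is a genuine norm by Lemma \ref{lem:norm}. I expect the main obstacle to be the interior-interface bookkeeping in the third step: correctly tracking the permutation matrix $\mat{T}^{(\kappa,\gamma)}$ together with the affine specialization of (\ref{eq:weights_match}) to verify that the central fluxes telescope to zero and that the penalty terms assemble into a manifestly dissipative quadratic form. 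Sign and indexing errors are easy to make precisely here, and it is this cancellation that makes the upwind mechanism ($\lambda \geq 0$) produce dissipation rather than spurious growth.
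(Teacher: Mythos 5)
Your proposal is correct and follows essentially the same route as the paper's proof: unify DG and FR via Theorem \ref{thm:fr_equiv}, apply the energy method with the SBP property to convert the volume term into facet terms, use the Nanson relation (\ref{eq:nanson}) to express the transformed normal speed as $\mathcal{J}^{(\kappa,\gamma)}(\vc{a}\cdot\vc{n}^{(\kappa,\gamma)})$, and then split the facet sum into interior/periodic pairs (central parts cancel, penalty parts assemble into a dissipative jump term via Assumption \ref{asm:conformal}), outflow facets (non-positive), and inflow facets (completing the square). The only cosmetic difference is that you pre-multiply the weak form (\ref{eq:fr_weak}) by $2(\vc{u}^{(h,\kappa)})^\T$ while the paper pre-multiplies the strong form (\ref{eq:fr_algebraic}) by $(\vc{u}^{(h,\kappa)})^\T(\mat{M}+\mat{K})$ and exploits skew-symmetry, which is the same algebra in a different order.
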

\begin{proof}
Energy stability will be demonstrated for the strong-form FR method in (\ref{eq:fr_algebraic}), which is equivalent to the filtered weak-form DG scheme in (\ref{eq:fr_weak}) under the present assumptions as a result of Theorem \ref{thm:fr_equiv}, recovering the standard DG method in (\ref{eq:dg_algebraic}) with $\mat{K} = \zeromat$. Beginning with a single element and pre-multiplying (\ref{eq:fr_algebraic}) by $(\vc{u}^{(h,\kappa)}(t))^\T(\mat{M}+\mat{K})$, we may use the definition of the filter matrix and apply the chain rule in time  to obtain
\begin{equation}\label{eq:local_terms}
\begin{multlined}
\dv{t}\bigg(\frac{1}{2}(\vc{u}^{(h,\kappa)}(t))^\T\mat{M}^{(\kappa)}(\mat{F}^{(\kappa)})^{-1}\vc{u}^{(h,\kappa)}(t)\bigg)= \\ -\underbrace{\sum_{m=1}^d (\vc{u}^{(h,\kappa)}(t))^\T(\mat{M}+\mat{K})\mat{D}^{(m)}\mat{P} \vc{f}^{(h,\kappa,m)}(t)}_{=:\, \fn{E}_{\textrm{I}}^{(\kappa)}(t)}  \\ + \sum_{\gamma=1}^{N_f}\underbrace{\sum_{m=1}^d\hat{n}_m^{(\gamma)}(\vc{u}^{(h,\kappa)}(t))^\T(\mat{V}^{(\gamma)})^\T\mat{W}^{(\gamma)}  \mat{V}^{(\gamma)}\mat{P}\vc{f}^{(h,\kappa,m)}(t)}_{=: \, \fn{E}_{\textrm{II}}^{(\kappa,\gamma)}(t)}\\ -  \sum_{\gamma=1}^{N_f}\underbrace{(\vc{u}^{(h,\kappa)}(t))^\T(\mat{V}^{(\gamma)})^\T\mat{W}^{(\gamma)}\vc{f}^{(*,\kappa,\gamma)}(t)}_{=:\, \fn{E}_{\textrm{III}}^{(\kappa,\gamma)}(t)},
\end{multlined}
\end{equation}
where the contributions $\fn{E}_{\textrm{I}}^{(\kappa)}(t)$, $\fn{E}_{\textrm{II}}^{(\kappa,\gamma)}(t)$, and $\fn{E}_{\textrm{III}}^{(\kappa,\gamma)}(t)$ to the local energy balance are defined as above for all $\kappa \in \{1,\ldots, K\}$ and $\gamma \in \{1,\ldots, N_f\}$. 
Since $\mat{K}\mat{D}^{(m)} = \zeromat$ for all $m \in \{1,\ldots d\}$ by Assumption \ref{asm:k_mat}, the SBP property in (\ref{eq:sbp}) implies that the relation
\begin{equation}\label{eq:sym_skewsym}
\begin{aligned}
(\mat{M}+\mat{K})\mat{D}^{(m)} &= \mat{M}\mat{D}^{(m)}\\
&= \frac{\mat{M}\mat{D}^{(m)} - (\mat{D}^{(m)})^\T\mat{M}}{2} + \frac{1}{2}\sum_{\gamma=1}^{N_f}\hat{n}_m^{(\gamma)}(\mat{V}^{(\gamma)})^\T\mat{W}^{(\gamma)}  \mat{V}^{(\gamma)}
\end{aligned}
\end{equation}
is satisfied for all $m \in \{1,\ldots, d\}$. Noting the skew-symmetry of the first term on the second line of (\ref{eq:sym_skewsym}), we then obtain
\begin{equation}\label{eq:telescoping}
\begin{multlined}
(\vc{u}^{(h,\kappa)}(t))^\T(\mat{M}+\mat{K})\mat{D}^{(m)}\vc{u}^{(h,\kappa)}(t) \\ = \frac{1}{2}\sum_{\gamma=1}^{N_f} \hat{n}_m^{(\gamma)}(\vc{u}^{(h,\kappa)}(t))^\T(\mat{V}^{(\gamma)})^\T\mat{W}^{(\gamma)}. \mat{V}^{(\gamma)}\vc{u}^{(h,\kappa)}(t).
\end{multlined}
\end{equation}
Since the mapping is assumed to be affine and the flux is assumed to be linear with constant coefficients, the first two terms of the energy balance in (\ref{eq:local_terms}) may be expressed as 
\begin{equation}\label{eq:e1kappa}
E_{\textrm{I}}^{(\kappa)}(t) = \sum_{m=1}^d\sum_{n=1}^d a_n\mathcal{J}^{(\kappa)}[(\mat{\mathcal{J}}^{(\kappa)})^{-1}]_{mn}(\vc{u}^{(h,\kappa)}(t))^\T(\mat{M}+\mat{K})\mat{D}^{(m)}\vc{u}^{(h,\kappa)}(t) 
\end{equation}
and
\begin{equation}\label{eq:e2kappa}
\begin{aligned}
E_{\textrm{II}}^{(\kappa,\gamma)}(t)  &=\\  \sum_{m=1}^d\sum_{n=1}^d & a_n\mathcal{J}^{(\kappa)}[(\mat{\mathcal{J}}^{(\kappa)})^{-1}]_{mn} \hat{n}_m^{(\gamma)}(\vc{u}^{(h,\kappa)}(t))^\T(\mat{V}^{(\gamma)})^\T\mat{W}^{(\gamma)}  \mat{V}^{(\gamma)}\vc{u}^{(h,\kappa)}(t) \\
&=  \vc{a} \cdot \vc{n}^{(\kappa,\gamma)} (\vc{u}^{(h,\kappa)}(t))^\T(\mat{V}^{(\gamma)})^\T\mat{W}^{(\gamma)}\mat{J}^{(\kappa,\gamma)}  \mat{V}^{(\gamma)}\vc{u}^{(h,\kappa)}(t)
\end{aligned}
\end{equation}
respectively, where the second equality in (\ref{eq:e2kappa}) results from  (\ref{eq:nanson}). We may therefore apply (\ref{eq:telescoping}) to (\ref{eq:e1kappa}) in order to obtain $E_{\textrm{I}}^{(\kappa)}(t) = \sum_{\gamma=1}^{N_f}E_{\textrm{II}}^{(\kappa,\gamma)}(t)/2$. Eliminating the factor of $1/2$ from both sides, the local energy balance in (\ref{eq:local_terms}) is then given by
\begin{equation}\label{eq:local_balance}
\dv{t}\qty((\vc{u}^{(h,\kappa)}(t))^\T\mat{M}^{(\kappa)}(\mat{F}^{(\kappa)})^{-1}\vc{u}^{(h,\kappa)}(t)) = \sum_{\gamma=1}^{N_f}\Big( E_{\textrm{II}}^{(\kappa,\gamma)}(t) - 2 E_{\textrm{III}}^{(\kappa,\gamma)}(t)\Big),
\end{equation}
where we note that the right-hand side contains only facet contributions. As with the proof of global conservation, summing (\ref{eq:local_balance}) over all elements and splitting the net contribution $E_{\textrm{net}}^{(\kappa,\gamma)}(t) \df E_{\textrm{II}}^{(\kappa,\gamma)}(t) + E_{\textrm{II}}^{(\nu,\eta)}(t) - 2(E_{\textrm{III}}^{(\kappa,\gamma)}(t) + E_{\textrm{III}}^{(\nu,\eta)}(t) )$ from each shared interface equally between the index pairs $(\kappa,\gamma), (\nu,\eta) \in \set{K}_\ug{\Omega}$ corresponding to the coincident or periodically connected facets $\ug{\Gamma}^{(\kappa,\gamma)}$ and $\ug{\Gamma}^{(\nu,\eta)}$, the global energy balance is given by
\begin{equation}\label{eq:global_balance}
\begin{multlined}
\dv{t}\qty(\sum_{\kappa=1}^K(\vc{u}^{(h,\kappa)}(t))^\T\mat{M}^{(\kappa)}(\mat{F}^{(\kappa)})^{-1}\vc{u}^{(h,\kappa)}(t)) \\ = \quad \sum_{\mathclap{(\kappa,\gamma) \in \set{K}_\ug{\Omega}}} \ \ \frac{1}{2}E_{\textrm{net}}^{(\kappa,\gamma)}(t)\ + \ \sum_{\mathclap{(\kappa,\gamma) \in  \set{K}_{\partial\ug{\Omega}}}}\Big(E_{\textrm{II}}^{(\kappa,\gamma)}(t) - 2E_{\textrm{III}}^{(\kappa,\gamma)}(t)\Big).
\end{multlined}
\end{equation}
As we will now demonstrate, the contributions to the right-hand side of (\ref{eq:global_balance}) arising from interior or periodic interfaces and those arising from facets on the inflow and outflow portions of the boundary may be analyzed separately in order to establish a bound for the quantity on the left-hand side.
\subsubsection*{Interior or periodic interfaces}

Similarly to the proof of Theorem \ref{thm:conservation}, it follows from Assumption \ref{asm:conformal} that for any interior or periodic interface corresponding to $(\kappa,\gamma) \in \set{K}_{\ug{\Omega}}$, the contributions arising from a numerical flux function taking the form of (\ref{eq:num_flux_adv}) may be expressed as
\begin{equation}\label{eq:e3kappa}
\begin{multlined}
E_{\textrm{III}}^{(\kappa,\gamma)}(t) = \frac{1}{2}E_{\textrm{II}}^{(\kappa,\gamma)}(t)\\ + \frac{1}{2} \vc{a} \cdot\vc{n}^{(\kappa,\gamma)}(\vc{u}^{(h,\kappa)}(t))^\T(\mat{V}^{(\gamma)})^\T\mat{W}^{(\gamma)}\mat{J}^{(\kappa,\gamma)}\mat{T}^{(\kappa,\gamma)}\mat{V}^{(\eta)}\vc{u}^{(h,\nu)}(t) \\
- \frac{\lambda}{2} \big\lvert\vc{a} \cdot\vc{n}^{(\kappa,\gamma)}\big\rvert(\vc{u}^{(h,\kappa)}(t))^\T(\mat{V}^{(\gamma)})^\T\mat{W}^{(\gamma)}\mat{J}^{(\kappa,\gamma)}\mat{T}^{(\kappa,\gamma)}\mat{V}^{(\eta)}\vc{u}^{(h,\nu)}(t)\\ + 
\frac{\lambda}{2} \big\lvert\vc{a} \cdot\vc{n}^{(\kappa,\gamma)}\big\lvert(\vc{u}^{(h,\kappa)}(t))^\T(\mat{V}^{(\gamma)})^\T\mat{W}^{(\gamma)}\mat{J}^{(\kappa,\gamma)}  \mat{V}^{(\gamma)}\vc{u}^{(h,\kappa)}(t)
\end{multlined}
\end{equation}
and
\begin{equation}\label{eq:e3nu}
\begin{multlined}
E_{\textrm{III}}^{(\nu,\eta)}(t) = \frac{1}{2}E_{\textrm{II}}^{(\nu,\eta)}(t) \\ -\frac{1}{2}\vc{a} \cdot\vc{n}^{(\kappa,\gamma)}(\vc{u}^{(h,\nu)}(t))^\T(\mat{V}^{(\eta)})^\T(\mat{T}^{(\kappa,\gamma)})^\T\mat{W}^{(\gamma)}\mat{J}^{(\kappa,\gamma)}  \mat{V}^{(\gamma)}\vc{u}^{(h,\kappa)}(t)  \\
- \frac{\lambda}{2} \big\lvert\vc{a} \cdot\vc{n}^{(\kappa,\gamma)}\big\rvert(\vc{u}^{(h,\nu)}(t))^\T(\mat{V}^{(\eta)})^\T(\mat{T}^{(\kappa,\gamma)})^\T\mat{W}^{(\gamma)}\mat{J}^{(\kappa,\gamma)}  \mat{V}^{(\gamma)}\vc{u}^{(h,\kappa)}(t) \\ + \frac{\lambda}{2}\big\lvert\vc{a} \cdot\vc{n}^{(\kappa,\gamma)}\big\lvert(\vc{u}^{(h,\nu)}(t))^\T(\mat{V}^{(\eta)})^\T(\mat{T}^{(\kappa,\gamma)})^\T\mat{W}^{(\gamma)}\mat{J}^{(\kappa,\gamma)}\mat{T}^{(\kappa,\gamma)}  \mat{V}^{(\eta)}\vc{u}^{(h,\nu)}(t).
\end{multlined}
\end{equation}
Noting that the first terms of (\ref{eq:e3kappa}) and (\ref{eq:e3nu}) cancel out the contributions of $E_{\textrm{II}}^{(\kappa,\gamma)}(t)$ and $E_{\textrm{II}}^{(\nu,\eta)}(t)$ to $E_{\textrm{net}}^{(\kappa,\gamma)}(t)$, respectively, and that the matrix $\mat{W}^{(\gamma)}\mat{J}^{(\kappa,\gamma)}$ is SPD under Assumption \ref{asm:affine}, the net contribution to the energy balance in (\ref{eq:global_balance}) corresponding to any $(\kappa,\gamma) \in \set{K}_{\ug{\Omega}}$ is then given by
\begin{equation}
\begin{aligned}
E_{\textrm{net}}^{(\kappa,\gamma)}(t) &=  -\lambda \big\lvert\vc{a} \cdot\vc{n}^{(\kappa,\gamma)}\big\rvert\Big((\vc{u}^{(h,\kappa)}(t))^\T(\mat{V}^{(\gamma)})^\T\mat{W}^{(\gamma)}\mat{J}^{(\kappa,\gamma)}  \mat{V}^{(\gamma)}\vc{u}^{(h,\kappa)}(t) \\ &\qquad - 2(\vc{u}^{(h,\kappa)}(t))^\T(\mat{V}^{(\gamma)})^\T\mat{W}^{(\gamma)}\mat{J}^{(\kappa,\gamma)}\mat{T}^{(\kappa,\gamma)}\mat{V}^{(\eta)}\vc{u}^{(h,\nu)}(t) \\ &\qquad+ (\vc{u}^{(h,\nu)}(t))^\T(\mat{V}^{(\eta)})^\T(\mat{T}^{(\kappa,\gamma)})^\T\mat{W}^{(\gamma)}\mat{J}^{(\kappa,\gamma)}\mat{T}^{(\kappa,\gamma)}  \mat{V}^{(\eta)}\vc{u}^{(h,\nu)}(t)\Big)\\
&=-\lambda \big\lvert\vc{a} \cdot\vc{n}^{(\kappa,\gamma)}\big\rvert(\vc{d}^{(\kappa,\gamma)}(t))^\T\mat{W}^{(\gamma)}\mat{J}^{(\kappa,\gamma)}\vc{d}^{(\kappa,\gamma)}(t)
\end{aligned}
\end{equation}
where we define $\vc{d}^{(\kappa,\gamma)}(t) \df \mat{V}^{(\gamma)}\vc{u}^{(h,\kappa)}(t) - \mat{T}^{(\kappa,\gamma)}\mat{V}^{(\eta)}\vc{u}^{(h,\nu)}(t)$. It therefore follows that $E_{\textrm{net}}^{(\kappa,\gamma)}(t) \leq 0$ if $\lambda \geq 0$, where for $\lambda = 0$, the interior or periodic interfaces do not contribute to the energy balance, whereas for $\lambda > 0$, the upwind bias of the numerical flux in (\ref{eq:num_flux_adv}) dissipates energy at a rate depending quadratically on the size of the inter-element jump. We have therefore demonstrated that the interior/periodic interface coupling procedure is energy stable, and now turn our attention to the contributions to (\ref{eq:global_balance}) resulting from the inflow and outflow portions of the boundary.
\subsubsection*{Inflow boundary}
For a given facet  $\ug{\Gamma}^{(\kappa,\gamma)} \subset \partial\ug{\Omega}^{-}$ corresponding to $(\kappa,\gamma) \in \set{K}_{\partial\ug{\Omega}^-}$, the contribution $ E_{\textrm{II}}^{(\kappa,\gamma)}(t) - 2 E_{\textrm{III}}^{(\kappa,\gamma)}(t)$ to the energy balance in (\ref{eq:global_balance}) results from using (\ref{eq:e2kappa}) and (\ref{eq:e3kappa}) with $\mat{T}^{(\kappa,\gamma)}\mat{V}^{(\eta)}\vc{u}^{(h,\nu)}(t)$ replaced by $\vc{b}^{(\kappa,\gamma)}$, taking $\lambda = 1$ as per Assumption \ref{asm:advection}, and noting that $\vc{a} \cdot \vc{n}^{(\kappa,\gamma)} = - \lvert\vc{a} \cdot \vc{n}^{(\kappa,\gamma)}\rvert$. We therefore obtain
\begin{equation}
\begin{aligned}
 E_{\textrm{II}}^{(\kappa,\gamma)}(t) - 2 E_{\textrm{III}}^{(\kappa,\gamma)}(t) &= \big\lvert\vc{a} \cdot\vc{n}^{(\kappa,\gamma)}\big\rvert\Big(2(\vc{u}^{(h,\kappa)}(t))^\T(\mat{V}^{(\gamma)})^\T\mat{W}^{(\gamma)}\mat{J}^{(\kappa,\gamma)}\vc{b}^{(\kappa,\gamma)}(t) \\ & \qquad\qquad- (\vc{u}^{(h,\kappa)}(t))^\T(\mat{V}^{(\gamma)})^\T\mat{W}^{(\gamma)}  \mat{V}^{(\gamma)}\vc{u}^{(h,\kappa)}(t) \Big) \\
 & =  \big\lvert\vc{a} \cdot\vc{n}^{(\kappa,\gamma)}\big\rvert\Big((\vc{b}^{(\kappa,\gamma)}(t))^\T\mat{W}^{(\gamma)}\mat{J}^{(\kappa,\gamma)}\vc{b}^{(\kappa,\gamma)}(t) \\ & \qquad\qquad-(\vc{d}^{(\kappa,\gamma)}(t))^\T\mat{W}^{(\gamma)}\mat{J}^{(\kappa,\gamma)}\vc{d}^{(\kappa,\gamma)}(t)\Big) \\
 &\leq\big\lvert\vc{a} \cdot\vc{n}^{(\kappa,\gamma)}\big\rvert(\vc{b}^{(\kappa,\gamma)}(t))^\T\mat{W}^{(\gamma)}\mat{J}^{(\kappa,\gamma)}\vc{b}^{(\kappa,\gamma)}(t),
 \end{aligned}
\end{equation}
where the second equality follows from completing the square and defining $\vc{d}^{(\kappa,\gamma)}(t) \df \mat{V}^{(\gamma)}\vc{u}^{(h,\kappa)}(t) - \vc{b}^{(\kappa,\gamma)}(t)$ and the final inequality follows from the positive-definiteness of $\mat{W}^{(\gamma)}\mat{J}^{(\kappa,\gamma)}$.
\subsubsection*{Outflow boundary}
Using the fact that $\ug{\Gamma}^{(\kappa,\gamma)} \subset \partial\ug{\Omega}^+$ implies that $\vc{a} \cdot \vc{n}^{(\kappa,\gamma)} = \lvert \vc{a} \cdot \vc{n}^{(\kappa,\gamma)}\rvert$, it follows from taking $\lambda = 1$ in (\ref{eq:e3kappa}) that $E_{\textrm{III}}^{(\kappa,\gamma)}(t) = E_{\textrm{II}}^{(\kappa,\gamma)}(t)$. The resulting contribution to the second sum on the right-hand side of (\ref{eq:global_balance}) for any $(\kappa,\gamma) \in \set{K}_{\partial\ug{\Omega}^+}$ is therefore simply $-E_{\textrm{II}}^{(\kappa,\gamma)}(t)$, which is non-positive as a result of (\ref{eq:e2kappa}) and the positive-definiteness of $\mat{W}^{(\gamma)}\mat{J}^{(\kappa,\gamma)}$, and thus does not contribute to growth in the solution energy. The final energy balance is therefore given by
\begin{equation}\label{eq:global_balance_final}
\begin{multlined}
\dv{t}\qty(\sum_{\kappa=1}^K(\vc{u}^{(h,\kappa)}(t))^\T\mat{M}^{(\kappa)}(\mat{F}^{(\kappa)})^{-1}\vc{u}^{(h,\kappa)}(t)) \\ = \ \ \sum_{\mathclap{(\kappa,\gamma) \in  \set{K}_{\partial\ug{\Omega}^-}}} \big\lvert\vc{a} \cdot\vc{n}^{(\kappa,\gamma)}\big\rvert(\vc{b}^{(\kappa,\gamma)}(t))^\T\mat{W}^{(\gamma)}\mat{J}^{(\kappa,\gamma)}\vc{b}^{(\kappa,\gamma)}(t) \\- \ \ \sum_{\mathclap{(\kappa,\gamma) \in  \set{K}_{\partial\ug{\Omega}^+}}} \ \big\lvert\vc{a} \cdot \vc{n}^{(\kappa,\gamma)}\big\rvert (\vc{u}^{(h,\kappa)}(t))^\T(\mat{V}^{(\gamma)})^\T\mat{W}^{(\gamma)}\mat{J}^{(\kappa,\gamma)}  \mat{V}^{(\gamma)}\vc{u}^{(h,\kappa)}(t)
\\- \ \ \sum_{\mathclap{(\kappa,\gamma) \in  \set{K}_{\partial\ug{\Omega}^-}}}\big\lvert\vc{a} \cdot\vc{n}^{(\kappa,\gamma)}\big\lvert(\vc{d}^{(\kappa,\gamma)}(t))^\T\mat{W}^{(\gamma)}\mat{J}^{(\kappa,\gamma)}\vc{d}^{(\kappa,\gamma)}(t)\\
-\ \ \sum_{\mathclap{(\kappa,\gamma) \in \set{K}_\ug{\Omega}}} \ \ \frac{\lambda}{2}\big\lvert\vc{a} \cdot\vc{n}^{(\kappa,\gamma)}\big\rvert(\vc{d}^{(\kappa,\gamma)}(t))^\T\mat{W}^{(\gamma)}\mat{J}^{(\kappa,\gamma)}\vc{d}^{(\kappa,\gamma)}(t),
\end{multlined}
\end{equation}
where the second term on the right-hand side corresponds to the rate of energy exiting the domain, while the final two terms correspond to dissipation resulting from the facet jumps. We therefore obtain the inequality in (\ref{eq:energy}), bounding the time rate of change in the norm $\lVert\fn{u}^h(\cdot,t)\rVert_{\set{V}_h}$.
\end{proof}

\begin{remark}\label{rmk:stability}
For homogeneous or periodic boundary conditions (i.e.\ $\fn{b}\xt = 0$ for all $\vc{x} \in \partial\ug{\Omega}^-$ or $\set{K}_{\partial\ug{\Omega}} = \emptyset$, respectively), the energy balance in (\ref{eq:energy}) implies that $\dd \lVert\fn{u}^h(\cdot,t)\rVert_{\set{V}_h}^2/ \dd t \leq 0$, resulting in energy conservation for periodic problems when a central flux (i.e.\ $\lambda = 0$) is chosen for all interfaces.
\end{remark}

\section{Numerical experiments}\label{sec:numerical}
\subsection{Design choices and implementation}\label{sec:design_choices}
The methods described in \textsection \ref{sec:formulation} were implemented within a freely available\footnote{\url{https://github.com/tristanmontoya/GHOST/tree/algebraic_framework_v2}} Python code offering substantial flexibility in the design choices afforded for DG and FR methods, which are detailed in this section. 

\subsubsection*{Approximation space}
The two-dimensional numerical experiments in this work involve formulations on the reference triangle given by $\hat{\ug{\Omega}} \df \{\hat{\vc{x}} \in \R^2 : \hat{x}_1 \geq -1, \ \hat{x}_2 \geq -1, \ \hat{x}_1 + \hat{x}_2 \leq 1 \}$, with $\set{V}$ taken to be the total-degree polynomial space $\set{P}_p(\hat{\ug{\Omega}})$ given in \textsection \ref{sec:approx_space}, with $p \in \{2,3,4\}$. 

\subsubsection*{Discrete inner products}\label{sec:num_disc_in_prod}
Three options for discrete inner products are employed for the numerical experiments in this section, which are referred to as \emph{Quadrature I}, \emph{Quadrature II}, and \emph{Collocation} schemes. The first two make use of the quadrature-based approach described in \textsection \ref{app:quadrature}, while the third employs the collocation-based approach in \textsection \ref{app:collocation}. For the quadrature-based schemes, all volume terms are computed using positive quadrature rules of degree $2p$ due to Xiao and Gimbutas \cite{xiao_gimbutas_quadrature_10}. To compute the facet terms, we use LG quadrature rules with $p+1$ nodes on each facet (i.e.\ degree $2p+1$) for the Quadrature I schemes and LGL quadrature rules with $p+1$ nodes on each facet (i.e.\ degree $2p-1$) for the Quadrature II schemes. Although both approaches result in Assumption \ref{asm:inner_prod} being satisfied (where, as discussed in Remark \ref{rmk:spd}, the rank of $\mat{V}$ was verified numerically), the insufficient accuracy of the facet integration for the Quadrature II schemes precludes the use of Lemma \ref{lem:total_sbp}. As a result, Assumption \ref{asm:sbp} is satisfied for the Quadrature I schemes, but not for the Quadrature II schemes, leading to the loss of the SBP property in the latter case. The Collocation schemes interpolate the volume terms on the ``warp \& blend'' nodes introduced by Warburton \cite{warburton_interpolation_nodes_simplex_06}, which form unisolvent sets for the corresponding polynomial spaces $\set{P}_p(\hat{\ug{\Omega}})$, and include $p+1$ LGL nodes on each facet, which are used to interpolate the facet terms. Therefore, as discussed in \textsection \ref{app:collocation}, the Collocation schemes satisfy Assumptions \ref{asm:sbp} and \ref{asm:inner_prod} by construction. The nodal sets corresponding to each choice of discrete inner product for polynomial degrees 2, 3, and 4 are shown in Figures \ref{fig:elem_p2}, \ref{fig:elem_p3}, \ref{fig:elem_p4}, respectively, where the symbols $\onontimes$ and {\tiny$\blacksquare$} represent nodes in $\set{S}$ and $\set{S}^{(\gamma)}$, respectively. 

\begin{figure}[t!]
	\centering
	\begin{subfigure}{0.33\linewidth}
		\centering
		\includegraphics[width=0.45\textwidth]{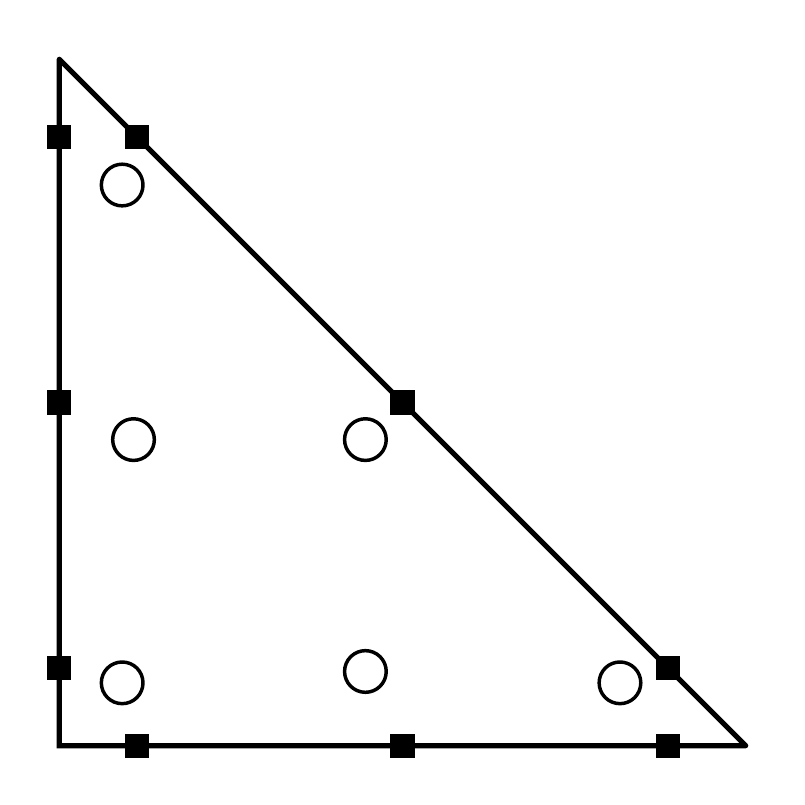}
		\caption{Quadrature I}
		\end{subfigure}~
	\begin{subfigure}{0.33\linewidth}
		\centering
		\includegraphics[width=0.45\textwidth]{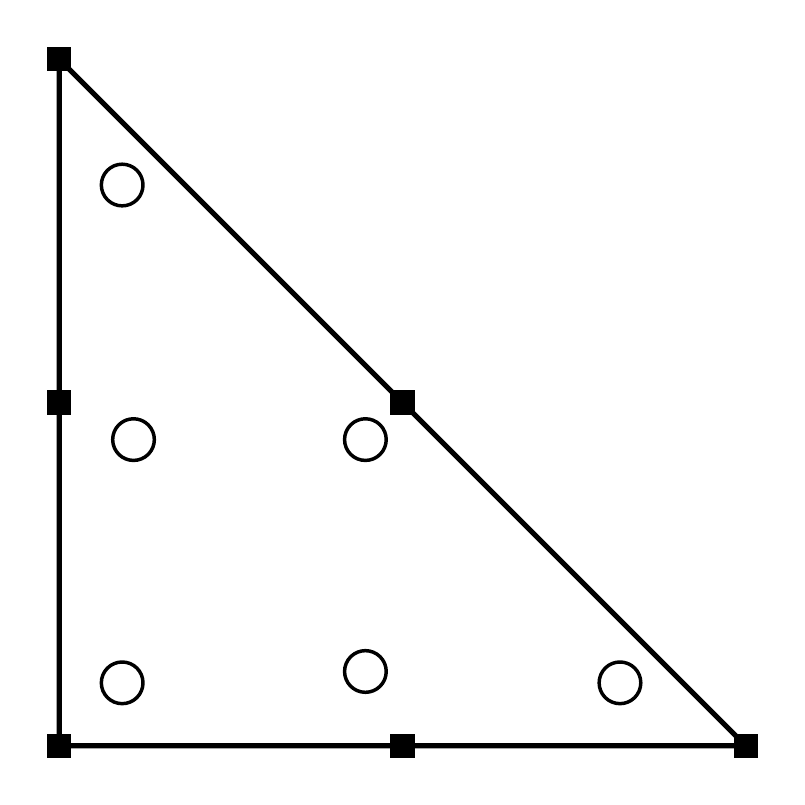}
		\caption{Quadrature II}
	\end{subfigure}~
	\begin{subfigure}{0.33\linewidth}
	\centering
	\includegraphics[width=0.45\textwidth]{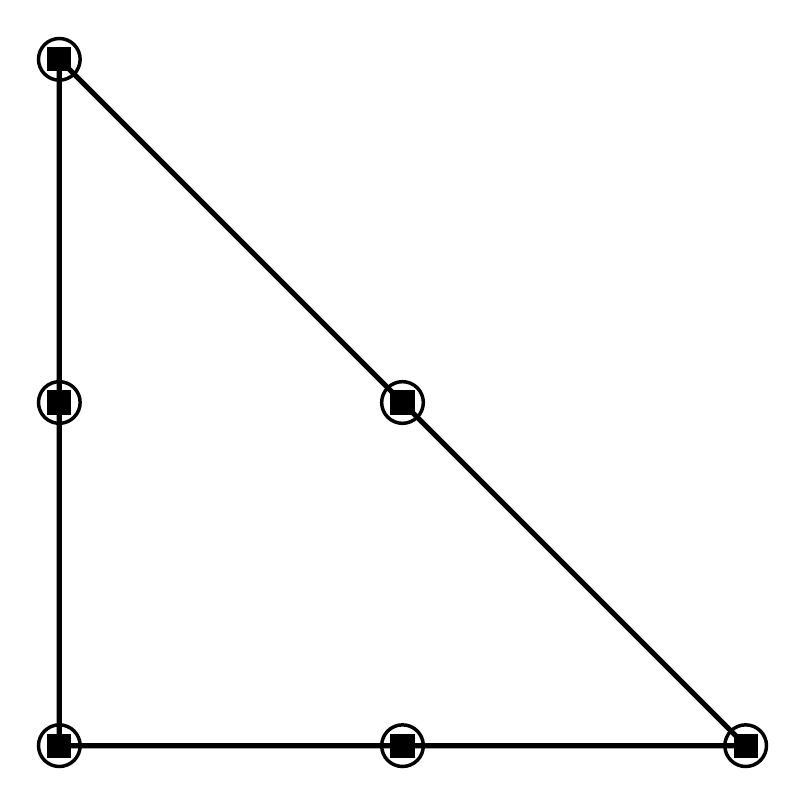}
	\caption{Collocation}
\end{subfigure}
\caption{Nodal sets for $p=2$}\label{fig:elem_p2}\vspace{0.5em}
	\centering
	\begin{subfigure}{0.33\linewidth}
		\centering
		\includegraphics[width=0.45\textwidth]{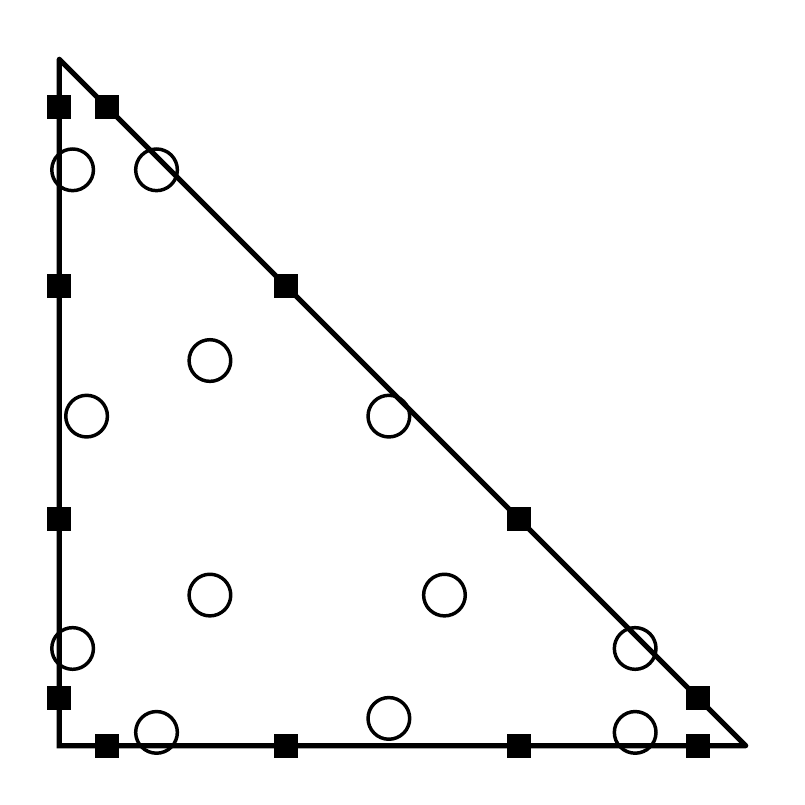}
		\caption{Quadrature I}
	\end{subfigure}~
	\begin{subfigure}{0.33\linewidth}
		\centering
		\includegraphics[width=0.45\textwidth]{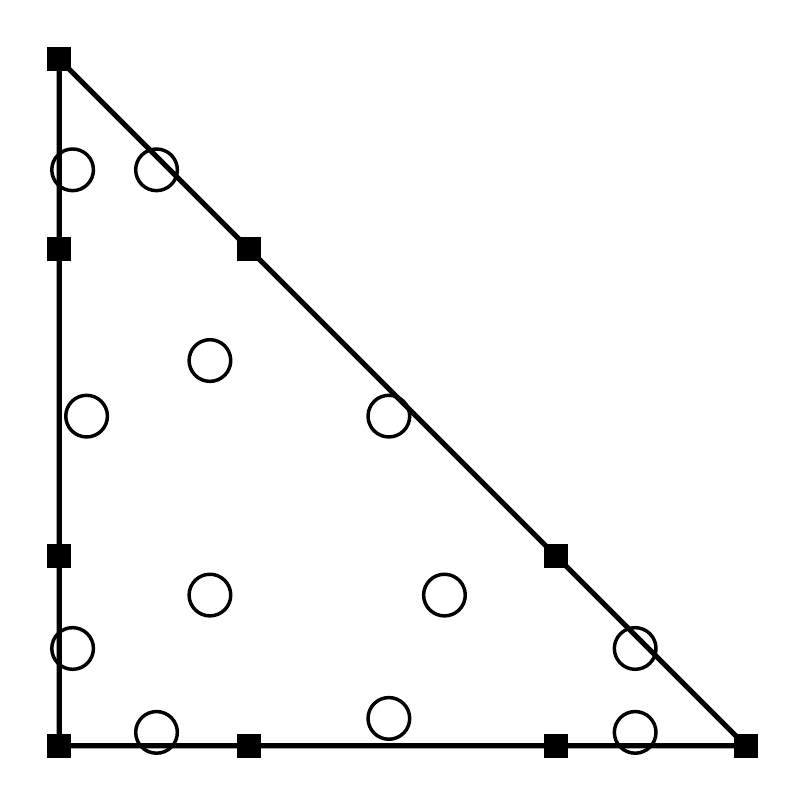}
		\caption{Quadrature II}
	\end{subfigure}~
	\begin{subfigure}{0.33\linewidth}
		\centering
		\includegraphics[width=0.45\textwidth]{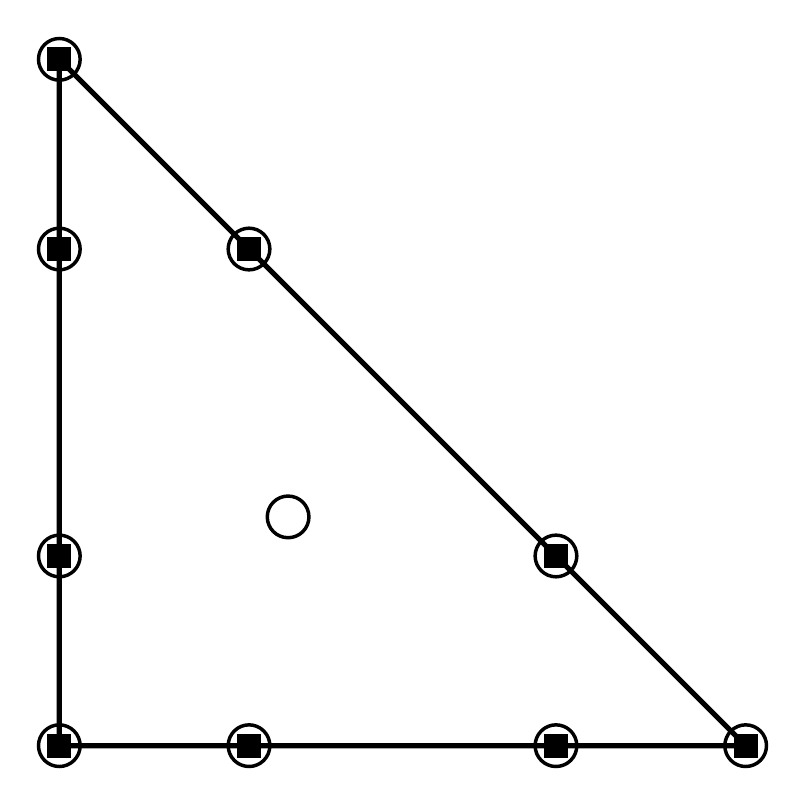}
		\caption{Collocation}
	\end{subfigure}
	\caption{Nodal sets for $p=3$}\label{fig:elem_p3}\vspace{0.5em}
	\centering
	\begin{subfigure}{0.33\linewidth}
		\centering
		\includegraphics[width=0.45\textwidth]{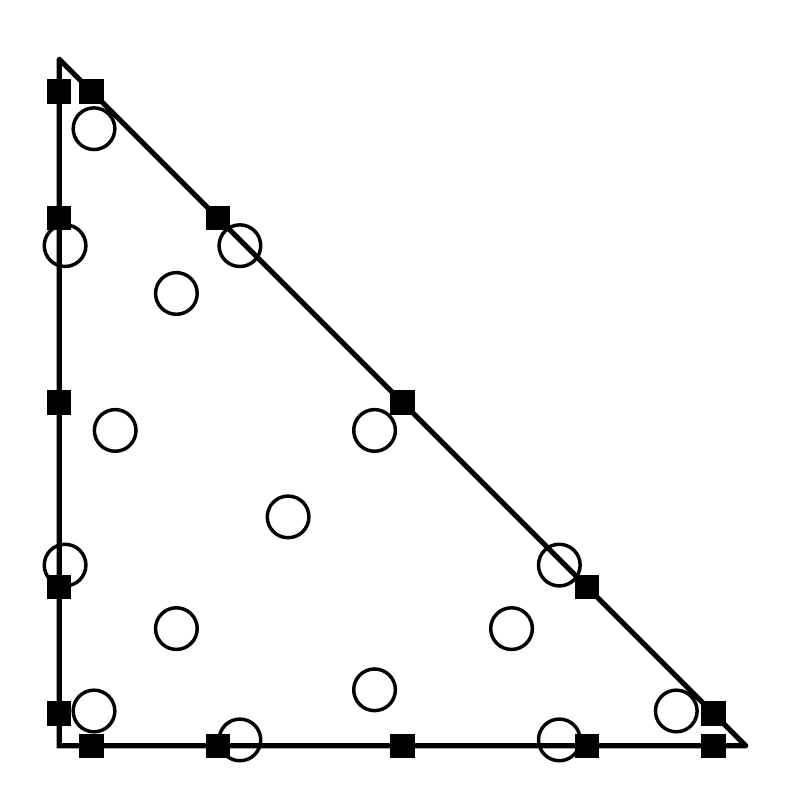}
		\caption{Quadrature I}
	\end{subfigure}~
	\begin{subfigure}{0.33\linewidth}
		\centering
		\includegraphics[width=0.45\textwidth]{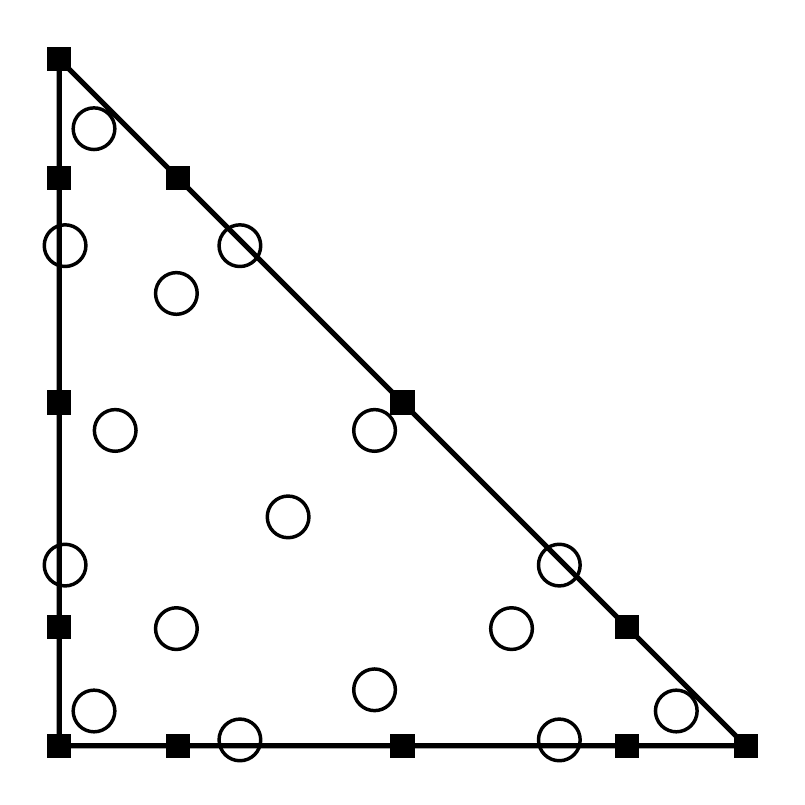}
		\caption{Quadrature II}
	\end{subfigure}~
	\begin{subfigure}{0.33\linewidth}
		\centering
		\includegraphics[width=0.45\textwidth]{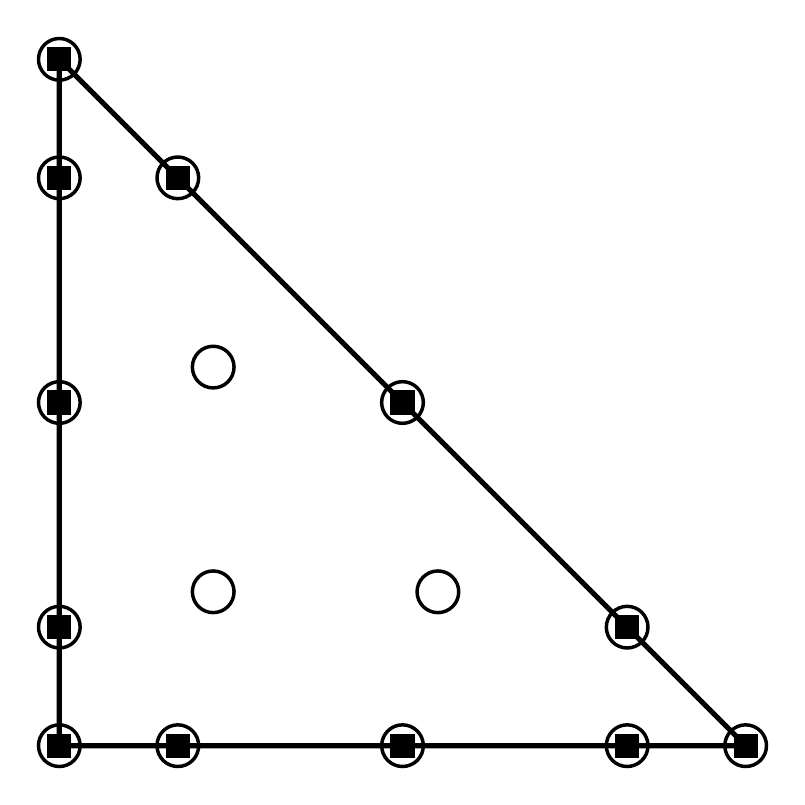}
		\caption{Collocation}
	\end{subfigure}
	\caption{Nodal sets for $p=4$}\label{fig:elem_p4}
\end{figure}

\subsubsection*{Basis}
For the quadrature-based schemes, orthonormal bases with respect to the $L^2$ inner product are used, constructed through normalization of the modal basis functions defined in \textsection \ref{app:modal_basis}. For the collocation-based schemes, nodal bases defined as in \textsection \ref{app:nodal_basis} are used with $\tilde{\set{S}} = \set{S}$, in this case corresponding to the ``warp \& blend'' nodes in \cite{warburton_interpolation_nodes_simplex_06}.

\subsubsection*{Spatial residual formulation}
Either the strong formulation in (\ref{eq:fr_algebraic}) or the weak formulation in (\ref{eq:fr_weak}) may be used, in the case of $\mat{K}=\zeromat$ recovering the standard unfiltered strong-form and weak-form DG schemes in (\ref{eq:dg_strong}) and (\ref{eq:dg_algebraic}), respectively.

\subsubsection*{Lifting and filter matrices}
The lifting operators employed for the FR schemes take the form $\mat{L}^{(\gamma)} \df (\mat{M}+\mat{K})^{-1}(\mat{V}^{(\gamma)})^\T\mat{W}^{(\gamma)}$ as in Theorem \ref{thm:vcjh}, with $\mat{K} = \mat{K}^{\textrm{2D}}$ as defined as in \textsection \ref{app:k_mat}, and the corresponding filter operator given by $\mat{F} \df (\mat{I} + \mat{M}^{-1}\mat{K})^{-1}$ as in Lemma \ref{lem:filtered_dg}. The coefficient $c \in \R$ scaling the matrix $\mat{K}$ is chosen either as $c = c_{\textrm{DG}} = 0$, recovering a DG scheme in strong or weak form, or as $c = c_+$, corresponding to the VCJH scheme found by Castonguay \etal \cite{castonguay_vincent_jameson_triangular_fr_11} to allow for the largest stable time step for a given polynomial degree when solving the linear advection equation. The latter choice is merely used as a reference value in order to demonstrate that the results in \textsection \ref{sec:analysis} hold for $c > 0$, and we emphasize that the optimality of such a choice in terms of time step size is highly dependent on the time-marching method, the mesh, and the particular problem being solved. The numerical experiments in Castonguay \etal \cite{castonguay_vincent_jameson_triangular_fr_11} employ values of $c_+$ given by $4.3 \times 10^{-2}$, $6.0 \times 10^{-4}$, and $5.6 \times 10^{-6}$ for polynomial degrees 2, 3, and 4, respectively, and we make use of the same values in order to recover discretizations analogous to those described in their work.
\subsubsection*{Numerical flux}
The numerical flux for the linear advection equation is given as in (\ref{eq:num_flux_adv}), where we report results for both $\lambda=1$ and $\lambda = 0$, corresponding to upwind and central fluxes, respectively. In the case of the Euler equations, we employ Roe's approximate Riemann solver \cite{roe81}.

\subsubsection*{Mesh and coordinate transformation}
As the test problems considered in this work are posed on the square domain $\ug{\Omega} \df (0,L)^2$, we begin with a regular Cartesian grid with $M \in \mathbb{N}$ equal intervals in each direction, dividing each square into two triangles of equal size to obtain a total of $K = 2M^2$ elements. Using an affine transformation to map the interpolation nodes for a Lagrange basis $\set{B}_{\textrm{map}} \df \{\phi_{\textrm{map}}^{(i)}\}_{i=1}^{N_{\textrm{map}}}$ of degree $p_{\textrm{map}} \in \mathbb{N}$ (chosen here as the ``warp \& blend'' nodes from \cite{warburton_interpolation_nodes_simplex_06}) on the reference element onto each physical element $\ug{\Omega}^{(\kappa)} \in \set{T}_h$ to obtain $\set{S}_{\textrm{map}}^{(\kappa)} \df \{\vc{x}^{(\kappa,i)}\}_{i=1}^{N_{\textrm{map}}} \subset \ug{\Omega}^{(\kappa)}$, we then interpolate the warping function considered by Del Rey Fern\'andez \etal \cite{delrey_tensorproduct_sbp_17} given by
\begin{equation}\label{eq:warp}
\fv{\Theta}(\vc{x}) \df \mqty[x_1 + \frac{1}{5} L \sin\qty(\pi x_1/L)\sin\qty(\pi x_2/L) \\ x_2 + \frac{1}{5} L\exp(1-x_2/L)\sin\qty(\pi x_1/L)\sin\qty(\pi x_2/L)]
\end{equation} 
on such a Lagrange basis, resulting in $\fv{X}^{(\kappa)}(\hat{\vc{x}}) \df \sum_{i=1}^{N_{\textrm{map}}} \fv{\Theta}(\vc{x}^{(\kappa,i)}) \phi_{\textrm{map}}^{(i)}(\hat{\vc{x}})$. Examples of such meshes obtained through warping of a split Cartesian mesh with $M = 5$ are shown in Figure \ref{fig:meshes}, where the symbol $\bullet$ is used to represent the image of each interpolation node for the basis $\set{B}_{\textrm{map}}$ under the mapping.
 \begin{figure}[t!]
 	\centering
 	\begin{subfigure}{0.33\linewidth}
 		\centering
 		\includegraphics[width=0.9\textwidth]{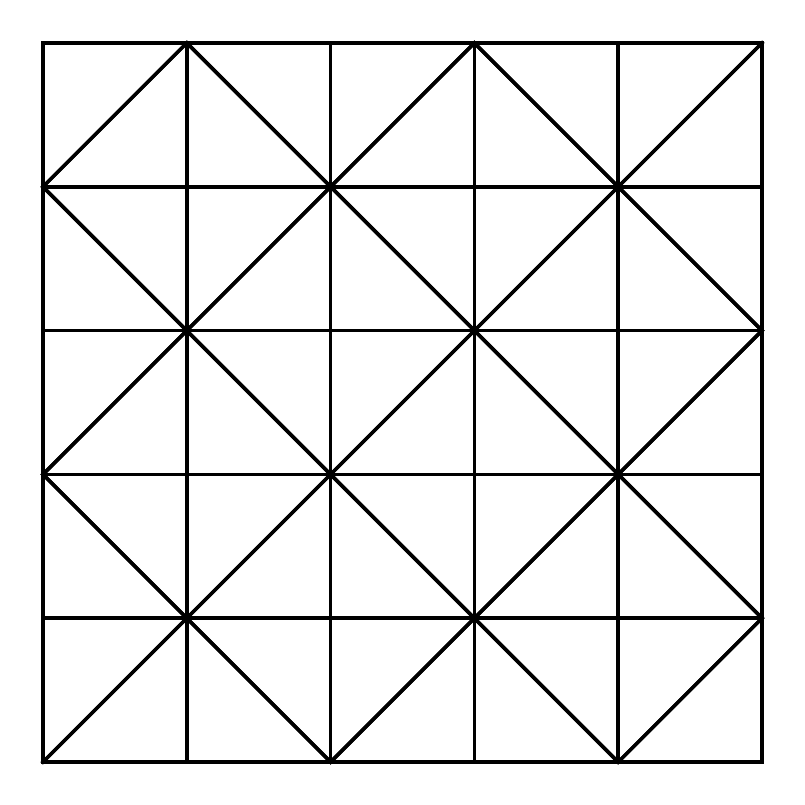}
 		\caption{Original mesh}
 	\end{subfigure}~
 	\begin{subfigure}{0.33\linewidth}
 		\centering
 		\includegraphics[width=0.9\textwidth]{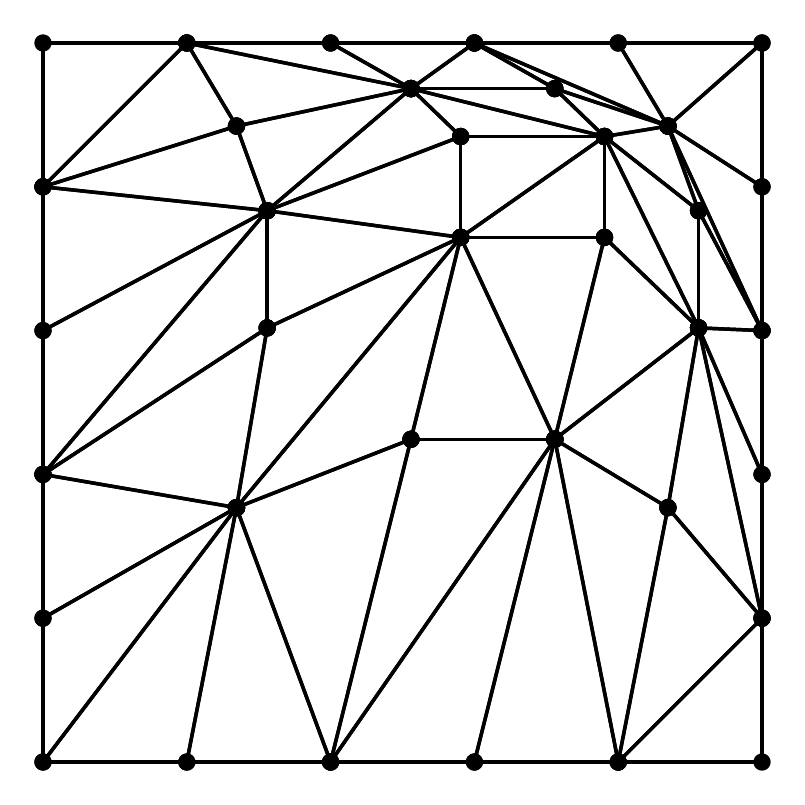}
 		\caption{$p_{\textrm{map}} = 1$}
 	\end{subfigure}~
 	\begin{subfigure}{0.33\linewidth}
 		\centering
 		\includegraphics[width=0.9\textwidth]{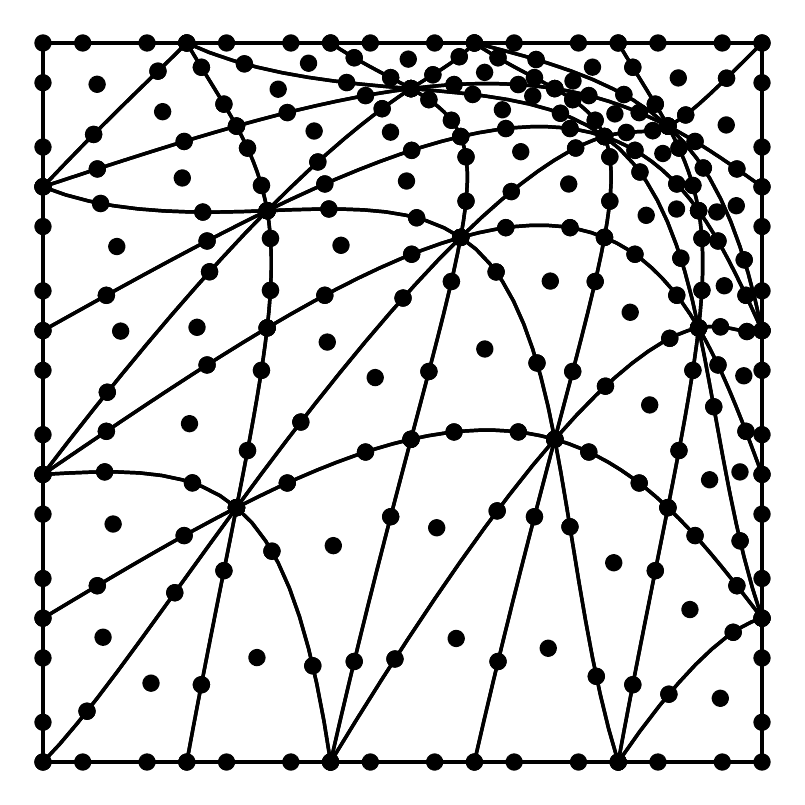}
 		\caption{$p_{\textrm{map}} = 3$}
 	\end{subfigure}
 	\caption{Warping of a split Cartesian mesh through Lagrange interpolation}\label{fig:meshes}
 \end{figure}

\subsubsection*{Temporal discretization}
The standard explicit fourth-order Runge-Kutta method is used to advance the solution in time, where the time step is chosen as $\Delta t \approx C_t h/a$,\footnote{Specifically, we take $\Delta t = T/ \lfloor T/ \Delta t^* \rfloor$, where $\Delta t^* = C_th/a$ is the target step size.} with a characteristic element size given by $h = L/M$ and a characteristic wave speed $a \in \R_+$ taken in \textsection \ref{sec:advection} to be the magnitude of the advection velocity and in \textsection \ref{sec:euler} to be the magnitude of the background velocity for the isentropic vortex. Motivated by the Courant-Friedrichs-Lewy (CFL) condition for the DG method in \cite[\textsection 2.2]{cockburn_shu_jsc_01}, we choose $C_t = \beta/(2p+1)$ with $\beta = 2.5 \times 10^{-3}$ in order to ensure that the error due to the temporal discretization is dominated by the error due to the spatial discretization.

\subsubsection*{Quantities of interest}
In order to provide numerical evidence in support of the theoretical results established in \textsection \ref{sec:analysis}, we define the following quantities of interest.
\begin{enumerate}[font=\itshape]
	\item \emph{Equivalence.} We evaluate the $L^2$ norm of the difference between each component of the strong-form and weak-form numerical solutions at time $T$, which is given in terms of $\fn{u}_e^{\textrm{strong}}(\cdot,T), \fn{u}_e^{\textrm{weak}}(\cdot,T) \in \set{V}_h$ by $\lVert\fn{u}_e^{\textrm{strong}}(\cdot,T) - \fn{u}_e^{\textrm{weak}}(\cdot,T) \rVert_{\ug{\Omega}}= \sqrt{\int_{\ug{\Omega}} (\fn{u}_e^{\textrm{strong}}(\vc{x},T) -  \fn{u}_e^{\textrm{weak}}(\vc{x},T))^2 \, \dd \vc{x}}$. The integral over $\ug{\Omega}$ is computed by separating the contributions from each element $\ug{\Omega}^{(\kappa)} \in \set{T}_h$, applying the corresponding transformation $\fv{X}^{(\kappa)}$ in order to use a quadrature rule of degree $4p$ due to Xiao and Gimbubtas \cite{xiao_gimbutas_quadrature_10}, and summing over all $\kappa \in \{1,\ldots, K\}$. This quantity is expected to be zero for the Quadrature I and Collocation schemes (but not necessarily for the Quadrature II schemes) as a result of Theorems \ref{thm:equiv_dg} and \ref{thm:fr_equiv}.
\item \emph{Conservation.} Considering the functional $\opr{C}$ as defined in Remark \ref{rmk:cons_functional}, we compute $\opr{C}(\fn{u}_e^h(\cdot,T))- \opr{C}(\fn{u}_e^h(\cdot,0))$, corresponding to the net production or destruction of each conserved quantity between $t=0$ and $t=T$, which is expected to be zero as a result of Theorems \ref{thm:conservation} and \ref{thm:conservation_strong}, the conditions of which are satisfied for all discretizations described in this section.
\item \emph{Energy difference.} In the case of the linear advection equation, we also evaluate the energy difference $\lVert \fn{u}^h(\cdot,T) \rVert_{\set{V}_h}^2 - \lVert \fn{u}^h(\cdot,0) \rVert_{\set{V}_h}^2$, measured in terms of the square of the discrete norm defined in Lemma \ref{lem:norm}, which, as discussed in Remark \ref{rmk:stability}, is expected to be nonincreasing with $\lambda = 1$ and invariant with $\lambda = 0$ for the Quadrature I and Collocation schemes (but not necessarily for the Quadrature II schemes) applied the periodic problem considered here. 
\end{enumerate}
\begin{remark}
In the results tabulated in \textsection \ref{sec:advection} and \textsection \ref{sec:euler}, values corresponding to properties established theoretically in \textsection \ref{sec:analysis} that are violated (i.e.\ due to design choices leading to the stated assumptions not being satisfied) are shown in italics. Furthermore, a horizontal line denotes a computation in which the scheme is observed to be unstable.
\end{remark}
\begin{remark}
Due to roundoff error and the influence of the temporal discretization (which is not accounted for in the theoretical analysis), we consider a quantity to be zero if it is on the order of $10^{-12}$ or less. This is many orders of magnitude smaller than the solution error for the grids considered in this work, which were deliberately chosen to be coarse in order to unambiguously demonstrate that the properties proven in \textsection \ref{sec:analysis} hold discretely regardless of the grid resolution, rather than asymptotically. 
\end{remark}
 
\subsection{Linear advection equation}\label{sec:advection}
We solve the constant-coefficient linear advection equation in (\ref{eq:advection}) on a square domain with $L=1$ and periodic boundary conditions, with the initial data given by $
\fn{u}^0(\vc{x}) \df \sin(2\pi x_1/L)\sin(2\pi x_2/L)$ and an advection velocity given by $\vc{a} \df  a[\cos(\theta), \sin(\theta)]^\T$ with $a = \sqrt{2}$ and $\theta = \pi/4$, stopping after one advection period (i.e.\ with $T = 1$). The mesh is constructed as described in \textsection \ref{sec:design_choices} with $M=8$, where we use $p_{\textrm{map}} = 1$ such that Assumption \ref{asm:affine} is satisfied. The results for polynomial degrees 2, 3, and 4 are shown in Tables \ref{tab:advection_p2}, \ref{tab:advection_p3}, and \ref{tab:advection_p4}, respectively. In all cases, the results for the Quadrature I and Collocation schemes are consistent with the theory, satisfying the equivalence, conservation, and stability (i.e.\ energy conservation for $\lambda =0$ and energy dissipation for $\lambda=1$) properties which were proven in \textsection \ref{sec:analysis}. As discussed in \textsection \ref{sec:design_choices}, the Quadrature II schemes violate Assumption \ref{asm:sbp}, and thus the theory in \textsection \ref{sec:equiv} and \textsection \ref{sec:stability} does not apply; accordingly, the strong-form and weak-form numerical solutions are different, and both the strong and weak forms are found to be unstable when a central flux (i.e.\ $\lambda =0$) is used. Although the Quadrature II schemes are stable in practice for an upwind flux (i.e.\ $\lambda=1$), no \emph{a priori} energy estimate exists due to the violation of the SBP property. The use of an upwind flux in such a case can therefore be interpreted as the addition of numerical dissipation to stabilize an unstable baseline discretization, where the required amount of dissipation is dependent on the particular problem and grid resolution. 

\begin{table}
	\caption{Linear advection equation, $p=2$}\label{tab:advection_p2}
\centering
\scriptsize
\begin{tabularx}{\textwidth}{@{\extracolsep{\fill}}*{8}{c}}
	\toprule
	{\multirow{2}{*}{\makecell{Discrete inner\\ products}}} &
	{\multirow{2}{*}{\makecell{$c$}}} &
	{\multirow{2}{*}{\makecell{$\lambda$}}}
	& {\multirow{2}{*}{\makecell{Equivalence}}}
	& \multicolumn{2}{c}{\makecell{Conservation}}
	&  \multicolumn{2}{c}{\makecell{Energy difference}} \\
	\cmidrule(lr){5-6}
	\cmidrule(lr){7-8}
	& & & & Strong & Weak & Strong  & Weak \\
	\midrule 
	Quadrature I & $c_{\mathrm{DG}}$ & 0& 8.558e-15 & -3.121e-16 & -5.757e-16 & -1.443e-15 & -1.887e-15\\ 
	&  & 1& 1.423e-15 & -1.278e-16 & -9.335e-17 & -5.847e-03 & -5.847e-03\\ 
	\cmidrule{2-8} 
	& $c_+$ & 0& 7.232e-15 & -7.081e-16 & -8.887e-16 & 1.665e-16 & 4.441e-16\\ 
	&  & 1& 1.320e-15 & 6.117e-16 & 5.333e-16 & -4.131e-02 & -4.131e-02\\ 
	\midrule 
	Quadrature II & $c_{\mathrm{DG}}$ & 0& --- &  --- & --- & --- & ---\\ 
	&  & 1& \textit{1.741e-02} & -1.382e-15 & 4.901e-16 & -5.308e-03 & -4.728e-03\\ 
	\cmidrule{2-8} 
	& $c_+$ & 0 & --- &  --- & --- & --- & ---\\ 
	&  & 1& \textit{2.410e-02} & 5.849e-16 & 3.646e-16 & -3.374e-02 & -4.306e-02\\ 
	\midrule 
	Collocation & $c_{\mathrm{DG}}$ & 0& 4.156e-15 & -6.516e-17 & -2.480e-16 & -7.577e-15 & -1.019e-14\\ 
	&  & 1& 3.201e-15 & -8.142e-17 & -4.937e-16 & -5.775e-03 & -5.775e-03\\ 
	\cmidrule{2-8} 
	& $c_+$ & 0& 5.068e-15 & -1.312e-16 & -3.735e-16 & -3.331e-16 & -4.441e-15\\ 
	&  & 1& 4.211e-15 & -2.686e-16 & -5.489e-16 & -4.008e-02 & -4.008e-02\\ 
		
\bottomrule	
\end{tabularx}
\end{table}
\begin{table}
	\caption{Linear advection equation, $p=3$}\label{tab:advection_p3}
	\centering
	\scriptsize
	\begin{tabularx}{\textwidth}{@{\extracolsep{\fill}}*{8}{c}}
		\toprule
		{\multirow{2}{*}{\makecell{Discrete inner\\ products}}} &
		{\multirow{2}{*}{\makecell{$c$}}} &
		{\multirow{2}{*}{\makecell{$\lambda$}}}
		& {\multirow{2}{*}{\makecell{Equivalence}}}
		& \multicolumn{2}{c}{\makecell{Conservation}}
		&  \multicolumn{2}{c}{\makecell{Energy difference}}\\
		\cmidrule(lr){5-6}
		\cmidrule(lr){7-8}
		& & & & Strong & Weak & Strong  & Weak  \\
		\midrule
		Quadrature I & $c_{\mathrm{DG}}$ & 0& 2.626e-14 & -3.973e-16 & -2.494e-18 & 1.610e-15 & 4.996e-16\\ 
		&  & 1& 2.436e-15 & -3.943e-16 & -1.884e-16 & -1.942e-04 & -1.942e-04\\ 
		\cmidrule{2-8} 
		& $c_+$ & 0& 1.742e-14 & -2.168e-19 & 3.306e-16 & 1.776e-15 & 2.220e-16\\ 
		&  & 1& 2.350e-15 & -1.997e-16 & -1.561e-17 & -1.860e-03 & -1.860e-03\\ 
		\midrule 
		Quadrature II & $c_{\mathrm{DG}}$ & 0 & --- &  --- & --- & --- & ---\\ 
		&  & 1& \textit{2.736e-03} & -1.476e-16 & -8.695e-17 & -1.831e-04 & -1.710e-04\\ 
		\cmidrule{2-8} 
		& $c_+$ & 0& --- &  --- & --- & --- & ---\\ 
		&  & 1& \textit{2.817e-03} & -3.589e-16 & 2.949e-17 & -1.709e-03 & -1.909e-03\\ 
		\midrule 
		Collocation & $c_{\mathrm{DG}}$ & 0& 6.336e-15 & -9.606e-17 & -1.621e-16 & -6.106e-16 & -1.832e-15\\ 
		&  & 1& 2.118e-15 & -3.253e-16 & -3.778e-16 & -1.970e-04 & -1.970e-04\\ 
		\cmidrule{2-8} 
		& $c_+$ & 0& 4.804e-15 & -1.233e-16 & -3.703e-16 & -2.220e-16 & 1.055e-15\\ 
		&  & 1& 1.754e-15 & -1.138e-16 & -2.719e-16 & -1.847e-03 & -1.847e-03\\
		\bottomrule	
	\end{tabularx}\vspace{2em}
\end{table}
\begin{table}
	\caption{Linear advection equation, $p=4$}\label{tab:advection_p4}
	\centering
	\scriptsize
	\begin{tabularx}{\textwidth}{@{\extracolsep{\fill}}*{8}{c}}
		\toprule
		{\multirow{2}{*}{\makecell{Discrete inner\\ products}}} &
		{\multirow{2}{*}{\makecell{$c$}}} &
		{\multirow{2}{*}{\makecell{$\lambda$}}}
		& {\multirow{2}{*}{\makecell{Equivalence}}}
		& \multicolumn{2}{c}{\makecell{Conservation}}
		&  \multicolumn{2}{c}{\makecell{Energy Difference}}\\
		\cmidrule(lr){5-6}
		\cmidrule(lr){7-8}
		& & & & Strong & Weak & Strong  & Weak \\
		\midrule
	Quadrature I & $c_{\mathrm{DG}}$ & 0& 1.219e-14 & 1.149e-16 & -3.945e-16 & -1.943e-15 & -9.437e-16\\ 
	&  & 1& 2.437e-15 & 4.878e-16 & 2.328e-16 & -4.326e-06 & -4.326e-06\\ 
	\cmidrule{2-8} 
	& $c_+$ & 0& 8.803e-15 & -6.771e-16 & -8.122e-16 & -1.665e-16 & -5.551e-17\\ 
	&  & 1& 2.342e-15 & 1.100e-16 & -1.149e-17 & -4.887e-05 & -4.887e-05\\ 
	\midrule 
	Quadrature II & $c_{\mathrm{DG}}$ & 0 & --- &  --- & --- & --- & ---\\ 
	&  & 1& \textit{4.366e-04} & -4.306e-16 & -1.510e-16 & -4.219e-06 & -3.999e-06\\ 
	\cmidrule{2-8} 
	& $c_+$ & 0 & --- &  --- & --- & --- & ---\\ 
	&  & 1& \textit{3.643e-04} & -1.304e-15 & 1.009e-15 & -4.910e-05 & -4.879e-05\\ 
	\midrule 
	Collocation & $c_{\mathrm{DG}}$ & 0& 1.016e-14 & -2.016e-16 & 1.746e-16 & -2.776e-15 & 1.887e-15\\ 
	&  & 1& 5.314e-15 & -1.081e-16 & -2.819e-17 & -4.360e-06 & -4.360e-06\\ 
	\cmidrule{2-8} 
	& $c_+$ & 0& 1.028e-14 & 5.302e-17 & 4.937e-16 & -2.276e-15 & 4.496e-15\\ 
	&  & 1& 7.288e-15 & -5.237e-17 & 1.223e-16 & -4.826e-05 & -4.826e-05\\ 
			\bottomrule	
	\end{tabularx}
\end{table}

\subsection{Euler equations}\label{sec:euler}
The propagation of an isentropic vortex is commonly used as a test case for numerical methods applied to unsteady nonlinear hyperbolic systems. Numerous versions of this problem have been posed in the literature, with the form considered in this work being a modification of that presented by Shu \cite[\textsection 5.1]{shu_eno_weno_98} following the general formulation described by Spiegel \etal \cite{spiegel_isentropic_euler_vortex_survey_15}. Considering the Euler equations in (\ref{eq:euler}) and denoting the Mach number and direction of the background flow as $\mathrm{Ma}_\infty \in \R_{\geq 0}$ and $\theta \in [0,2\pi)$, respectively, the initial velocity field is given for a vortex of strength $\varepsilon \in \R_+$ centred at $\vc{x}^0 \in \ug{\Omega}$ as
\begin{equation}\label{eq:init_vel}
\fv{v}^0(\vc{x}) \df \mathrm{Ma}_\infty\qty(\mqty[\cos(\theta)\\ \sin(\theta) ] + \varepsilon\exp\qty(1-\lVert \vc{x}-\vc{x}^0\rVert_2^2)\mqty[-(x_2-x_2^0) \\ x_1-x_1^0]), 
\end{equation}
the initial temperature field is prescribed as
\begin{equation}\label{eq:init_temp}
\pi^0(\vc{x}) \df 1 -\frac{(r-1)\varepsilon^2\mathrm{Ma}_\infty^2}{2}\exp(1-\lVert \vc{x}-\vc{x}^0\rVert_2^2),
\end{equation}
and constant entropy is enforced throughout the spatial domain. Using (\ref{eq:init_vel}) and (\ref{eq:init_temp}), as well as the equation of state and isentropic relations for an ideal gas with constant specific heat, the initial condition is then given by
\begin{equation}
\fv{u}^0(\vc{x}) \df  \mqty[(\pi^0(\vc{x}))^{1/(r-1)} \\ (\pi^0(\vc{x}))^{1/(r-1)}\fv{v}^0(\vc{x})  \\ 
\frac{1}{r-1}(\pi^0(\vc{x}))^{r/(r-1)} +\frac{1}{2}(\pi^0(\vc{x}))^{1/(r-1)}\lVert\fv{v}^0(\vc{x})\rVert_2^2].
\end{equation}
Similarly to the advection problem described in \textsection \ref{sec:advection}, such an initial condition results in an exact solution on an infinite domain given by the advection of the vortex at the background velocity. For the numerical experiments in this section, we enforce periodic boundary conditions on a square domain of length $L = 10$, and we set $\mathrm{Ma}_\infty = 0.4$, $r = 1.4$, $\theta = \pi/4$, $\varepsilon = 5\sqrt{2}\exp(1/2)/(4\pi)$, and $\vc{x}^0 = [5,5]^\T$ to recover a subsonic vortex which is otherwise similar to the supersonic problem described by Shu \cite{shu_eno_weno_98}. The solution is evolved in time for one period (i.e.\ with $T =\sqrt{2}L/\mathrm{Ma}_{\infty}$) on a mesh constructed as described in \textsection \ref{sec:design_choices} with $M=16$ and $p_{\textrm{map}} = p$.
\par
The results for polynomial degrees 2, 3, and 4 are shown in Tables \ref{tab:euler_p2}, \ref{tab:euler_p3}, and \ref{tab:euler_p4}, respectively, where we see that the strong and weak forms are equivalent for the Quadrature I and Collocation schemes, and the discrete integrals of all four solution variables are invariant for all three approaches, as expected from the theoretical analysis in \textsection \ref{sec:equiv} and \textsection\ref{sec:conservation}. For the Quadrature II schemes, the strong and weak forms again result in different numerical solutions due to Assumption \ref{asm:sbp} not being satisfied. We do not investigate energy balances for this problem, as the theory in \textsection \ref{sec:stability} is not applicable as a result of the system being nonlinear and the mapping not being affine. Hence, the numerical results presented in this section for the Euler equations, as with those presented in \textsection \ref{sec:advection} for the linear advection equation, are consistent with the theory in \textsection \ref{sec:analysis}.
\begin{table}[t]
	\centering
	\caption{Euler equations, $p=2$}\label{tab:euler_p2}
	\scriptsize
\begin{tabularx}{\textwidth}{@{\extracolsep{\fill}}*{6}{c}}
	\toprule
	{\multirow{2}{*}{\makecell{Discrete inner\\ products}}} 
	& {\multirow{2}{*}{$c$}} & {\multirow{2}{*}{Equation}} 
	& {\multirow{2}{*}{\makecell{Equivalence}}}
	& \multicolumn{2}{c}{\makecell{Conservation}}\\
	\cmidrule(lr){5-6}
	& & & & Strong & Weak \\
	\midrule
	Quadrature I & $c_{\mathrm{DG}}$ & $\rho$ & 5.974e-14 & 2.700e-13 & -8.527e-14\\ 
	& & $\rho V_1$ & 1.452e-13 & 3.197e-13 & -7.709e-13\\ 
	& & $\rho V_2$ & 1.538e-13 & 1.009e-12 & 1.016e-12\\ 
	& & $E$ & 1.478e-13 & 1.052e-12 & 3.126e-13\\ 
	\cmidrule{2-6} 
	& $c_+$ & $\rho$ & 5.124e-14 & 3.979e-13 & 1.137e-13\\ 
	& & $\rho V_1$ & 1.434e-13 & 3.411e-13 & -7.461e-13\\ 
	& & $\rho V_2$ & 1.602e-13 & 1.027e-12 & 1.140e-12\\ 
	& & $E$ & 1.334e-13 & 1.165e-12 & 2.558e-13\\ 
	\midrule 
	Quadrature II & $c_{\mathrm{DG}}$ & $\rho$ & \textit{3.019e-02} & 9.948e-14 & -1.563e-13\\ 
	& & $\rho V_1$ & \textit{5.370e-02} & -5.045e-13 & -1.787e-12\\ 
	& & $\rho V_2$ & \textit{5.870e-02} & 1.116e-12 & 1.631e-12\\ 
	& & $E$ & \textit{6.608e-02} & 3.695e-13 & -5.400e-13\\ 
	\cmidrule{2-6} 
	& $c_+$ & $\rho$ & \textit{2.308e-02} & 1.847e-13 & -4.263e-14\\ 
	& & $\rho V_1$ & \textit{4.605e-02} & -4.725e-13 & -1.901e-12\\ 
	& & $\rho V_2$ & \textit{4.414e-02} & 1.101e-12 & 1.705e-12\\ 
	& & $E$ & \textit{5.742e-02} & 4.832e-13 & -8.527e-14\\ 
	\midrule 
	Collocation & $c_{\mathrm{DG}}$ & $\rho$ & 1.616e-13 & 5.400e-13 & -6.253e-13\\ 
	& & $\rho V_1$ & 3.850e-13 & -7.816e-14 & -2.576e-12\\ 
	& & $\rho V_2$ & 4.296e-13 & 2.515e-12 & 9.663e-13\\ 
	& & $E$ & 4.353e-13 & 1.620e-12 & -1.535e-12\\ 
	\cmidrule{2-6} 
	& $c_+$ & $\rho$ & 9.546e-14 & 5.116e-13 & -2.842e-13\\ 
	& & $\rho V_1$ & 2.136e-13 & -4.263e-14 & -1.172e-12\\ 
	& & $\rho V_2$ & 2.176e-13 & 2.512e-12 & 1.034e-12\\ 
	& & $E$ & 2.586e-13 & 1.705e-12 & -3.411e-13\\ 
	\bottomrule
\end{tabularx}
\end{table}

\begin{table}[t]
	\centering
	\caption{Euler equations, $p=3$}\label{tab:euler_p3}
	\scriptsize
	\begin{tabularx}{\textwidth}{@{\extracolsep{\fill}}*{6}{c}}
	\toprule
	{\multirow{2}{*}{\makecell{Discrete inner\\ products}}} 
	& {\multirow{2}{*}{$c$}} & {\multirow{2}{*}{Equation}} 
	& {\multirow{2}{*}{\makecell{Equivalence}}}
	& \multicolumn{2}{c}{\makecell{Conservation}}\\
	\cmidrule(lr){5-6}
	& & & & Strong & Weak \\
	\midrule
	Quadrature I & $c_{\mathrm{DG}}$ & $\rho$ & 6.875e-14 & 6.821e-13 & 3.411e-13\\ 
	& & $\rho V_1$ & 1.687e-13 & -6.679e-13 & -7.851e-13\\ 
	& & $\rho V_2$ & 1.472e-13 & 2.419e-12 & 1.329e-12\\ 
	& & $E$ & 1.660e-13 & 1.506e-12 & 5.684e-13\\ 
	\cmidrule{2-6} 
	& $c_+$ & $\rho$ & 7.187e-14 & 6.821e-13 & 1.990e-13\\ 
	& & $\rho V_1$ & 1.678e-13 & -6.679e-13 & -9.486e-13\\ 
	& & $\rho V_2$ & 1.542e-13 & 2.377e-12 & 1.300e-12\\ 
	& & $E$ & 1.777e-13 & 1.535e-12 & 3.126e-13\\ 
	\midrule 
	Quadrature II & $c_{\mathrm{DG}}$ & $\rho$ & \textit{1.407e-02} & 4.690e-13 & 1.421e-13\\ 
	& & $\rho V_1$ & \textit{2.250e-02} & -1.002e-12 & -1.297e-12\\ 
	& & $\rho V_2$ & \textit{2.674e-02} & 1.982e-12 & 1.670e-12\\ 
	& & $E$ & \textit{2.179e-02} & 9.663e-13 & 2.558e-13\\ 
	\cmidrule{2-6} 
	& $c_+$ & $\rho$ & \textit{1.073e-02} & 3.126e-13 & 2.132e-13\\ 
	& & $\rho V_1$ & \textit{1.688e-02} & -1.020e-12 & -1.197e-12\\ 
	& & $\rho V_2$ & \textit{1.904e-02} & 1.954e-12 & 1.616e-12\\ 
	& & $E$ & \textit{1.771e-02} & 6.537e-13 & 3.126e-13\\ 
	\midrule 
	Collocation & $c_{\mathrm{DG}}$ & $\rho$ & 5.045e-14 & -1.847e-13 & -2.984e-13\\ 
	& & $\rho V_1$ & 1.031e-13 & -2.128e-12 & -2.370e-12\\ 
	& & $\rho V_2$ & 1.029e-13 & 1.794e-12 & 1.698e-12\\ 
	& & $E$ & 9.036e-14 & -3.979e-13 & -5.969e-13\\ 
	\cmidrule{2-6} 
	& $c_+$ & $\rho$ & 1.390e-13 & -1.279e-13 & 9.948e-14\\ 
	& & $\rho V_1$ & 4.973e-13 & -2.100e-12 & 5.294e-13\\ 
	& & $\rho V_2$ & 5.055e-13 & 1.794e-12 & -3.908e-13\\ 
	& & $E$ & 3.993e-13 & -4.263e-13 & 2.274e-13\\
		\bottomrule
	\end{tabularx}
\end{table}

\begin{table}[t]
	\centering
	\caption{Euler equations, $p=4$}\label{tab:euler_p4}
	\scriptsize
\begin{tabularx}{\textwidth}{@{\extracolsep{\fill}}*{6}{c}}
	\toprule
	{\multirow{2}{*}{\makecell{Discrete inner\\ products}}} 
	& {\multirow{2}{*}{$c$}} & {\multirow{2}{*}{Equation}} 
	& {\multirow{2}{*}{\makecell{Equivalence}}}
	& \multicolumn{2}{c}{\makecell{Conservation}}\\
	\cmidrule(lr){5-6}
	& & & & Strong & Weak \\
	\midrule
	Quadrature I & $c_{\mathrm{DG}}$ & $\rho$ & 5.573e-14 & 4.690e-13 & 3.553e-13\\ 
	& & $\rho V_1$ & 6.885e-14 & -1.528e-13 & -3.126e-13\\ 
	& & $\rho V_2$ & 6.624e-14 & 1.219e-12 & 1.098e-12\\ 
	& & $E$ & 8.680e-14 & 6.821e-13 & 3.695e-13\\ 
	\cmidrule{2-6} 
	& $c_+$ & $\rho$ & 5.004e-14 & 5.116e-13 & 4.121e-13\\ 
	& & $\rho V_1$ & 7.479e-14 & -2.025e-13 & -4.299e-13\\ 
	& & $\rho V_2$ & 7.004e-14 & 1.190e-12 & 1.094e-12\\ 
	& & $E$ & 7.689e-14 & 8.242e-13 & 6.537e-13\\ 
	\midrule 
	Quadrature II & $c_{\mathrm{DG}}$ & $\rho$ & \textit{8.368e-03} & 6.395e-13 & -5.684e-14\\ 
	& & $\rho V_1$ & \textit{7.560e-03} & 1.421e-13 & -1.791e-12\\ 
	& & $\rho V_2$ & \textit{7.803e-03} & 1.773e-12 & 1.911e-12\\ 
	& & $E$ & \textit{5.937e-03} & 1.620e-12 & -2.274e-13\\ 
	\cmidrule{2-6} 
	& $c_+$ & $\rho$ & \textit{4.230e-03} & 7.958e-13 & 4.263e-14\\ 
	& & $\rho V_1$ & \textit{5.130e-03} & 1.421e-13 & -1.986e-12\\ 
	& & $\rho V_2$ & \textit{5.918e-03} & 1.759e-12 & 1.897e-12\\ 
	& & $E$ & \textit{4.764e-03} & 1.705e-12 & 2.842e-14\\ 
	\midrule 
	Collocation & $c_{\mathrm{DG}}$ & $\rho$ & 1.531e-13 & 2.700e-13 & 9.379e-13\\ 
	& & $\rho V_1$ & 4.463e-13 & 3.553e-14 & -4.832e-13\\ 
	& & $\rho V_2$ & 4.288e-13 & 2.878e-13 & 3.276e-12\\ 
	& & $E$ & 4.082e-13 & -1.421e-13 & 1.990e-12\\ 
	\cmidrule{2-6} 
	& $c_+$ & $\rho$ & 1.862e-13 & 1.705e-13 & 1.165e-12\\ 
	& & $\rho V_1$ & 5.534e-13 & 3.553e-14 & -3.908e-14\\ 
	& & $\rho V_2$ & 5.066e-13 & 3.091e-13 & 3.929e-12\\ 
	& & $E$ & 5.137e-13 & 1.137e-13 & 2.956e-12\\
	\bottomrule
\end{tabularx}
\end{table}

\section{Conclusions}\label{sec:conclusions}
We have proposed a unifying framework enabling the algebraic formulation of a broad class of high-order DG and FR methods applied to systems of conservation laws, facilitating the unified theoretical analysis of such schemes through matrix-based techniques. The role of the multidimensional SBP property is examined in detail for methods within the proposed framework, revealing new insights and generalizing existing results pertaining to the equivalence, conservation, and stability properties of DG and FR methods. Potential avenues for future research include the application of similar unifying principles to nonlinearly stable schemes and problems with diffusive terms as well as the comparative evaluation of methods within and outside of the present framework. Specifically, it would be interesting to examine the difference in computational expense between strong-form and weak-form implementations of methods which we have shown in this paper to be mathematically equivalent, and to compare the DG and FR methods considered in this paper with collocated multidimensional SBP schemes without an analytical basis (e.g.\ those described in \cite{hicken_mdsbp_16} and \cite{delrey_mdsbp_sat_18}). Additionally, the theory could be extended to enable proofs of convergence, establishing \emph{a priori} error estimates for linear problems with smooth solutions as consequences of the SBP property, which was discussed by Sv\"ard and Nordstr\"om \cite{svard_nordstrom_convergence_rate_energystable_19} in the context of finite-difference schemes in one space dimension.

\begin{acknowledgements}
The authors are grateful to Gianmarco Mengaldo, Masayuki Yano, and David Del Rey Fern\'andez for their feedback. Portions of the computations were performed on the Niagara supercomputer at the SciNet HPC Consortium \cite{ponce_niagara_19}. The two-dimensional polynomial basis functions, interpolation nodes, and quadrature rules were generated using \texttt{modepy}, developed by Andreas Kl\"ockner. The initial uniform triangular grids and the LGL quadrature rules were generated using \texttt{meshzoo} and \texttt{quadpy}, respectively, both developed by Nico Schl\"omer. All plots appearing in this paper were produced using Matplotlib \cite{hunter_matplotlib_07}.
\end{acknowledgements}

\section*{Statements}

\subsection*{Code availability}
\small
The code developed for this work is publicly available on GitHub under the GNU General Public License at \url{https://github.com/tristanmontoya/GHOST/tree/algebraic_framework_v2}.
\subsection*{Availability of data and material}
The simulation data analyzed in \textsection \ref{sec:numerical} are available on GitHub within the above repository.
\subsection*{Funding}
The first author acknowledges the financial support provided by the University of Toronto, the Natural Sciences and Engineering Research Council of Canada, and the Government of Ontario. SciNet is funded by the Canada Foundation for Innovation, the Government of Ontario, the Ontario Research Fund -- Research Excellence, and the University of Toronto.
\subsection*{Conflicts of interest/Competing interests}
The authors declare that there are no conflicts of interest or competing interests associated with this work.

\appendix
{
\section{Polynomial Bases}\label{app:bases}
\subsection{Modal basis}\label{app:modal_basis}
A basis $\set{B}_0 \df \{\phi_0^{(i)}\}_{i=1}^{N_p}$ for the space $\set{V}$ is orthogonal with respect to the standard $L^2$ inner product if $\il \phi_0^{(i)}, \phi_0^{(j)} \ir_{\hat{\ug{\Omega}}} = 0$ for all $i \neq j$. While such basis functions may be obtained numerically through the Gram-Schmidt orthogonalization of a monomial basis, analytical expressions exist for certain spaces and element types. A two-variable analogue of the Legendre polynomial system was first proposed for triangular elements by Proriol \cite{proriol_polynomials_57} (see also Koornwinder \cite[\textsection 3.3]{koornwinder_orthogonal_polynomials_75} and Dubiner \cite[\textsection 5]{dubiner_spectral_triangle_91}), and is given as a basis for $\set{P}_p(\hat{\ug{\Omega}})$ with $\hat{\ug{\Omega}} \df \{\hat{\vc{x}} \in \R^2 : \hat{x}_1 \geq -1, \ \hat{x}_2 \geq -1, \ \hat{x}_1 + \hat{x}_2 \leq 1 \}$ by
\begin{equation}\label{eq:triangle_basis}
\phi_0^{I(\vc{\alpha})}(\hat{\vc{x}}) \df \fn{P}_{\alpha_1}^{(0,0)}\big(2(1 + \hat{x}_1)/(1-\hat{x}_2) - 1 \big)\qty(\frac{1-\hat{x}_2}{2})^{\alpha_1} \fn{P}_{\alpha_2}^{(2\alpha_1+1,0)}(\hat{x}_2),
\end{equation}
where $\fn{I} : \set{N} \to \{1,\ldots, N_p\}$ reduces the multi-index set $\set{N} \df \{\vc{\alpha} \in \mathbb{N}_0^d : \lvert \vc{\alpha} \rvert \leq p \}$ to a single index, and we define $\fn{P}_k^{(a,b)}$ as the degree $k \in \mathbb{N}_0$ Jacobi polynomial corresponding to the weight function $\fn{W}(\hat{x}) \df (1-\hat{x})^a(1+\hat{x})^b$ for $a,b > 1$. In the case of $\set{Q}_p(\hat{\ug{\Omega}})$ with $\hat{\ug{\Omega}} \df [-1,1]^d$, an orthogonal tensor-product basis is given by $\phi_0^{I(\vc{\alpha})}(\hat{\vc{x}}) \df \fn{L}_{\alpha_1}(\hat{x}_1) \cdots \fn{L}_{\alpha_d}(\hat{x}_d)$, where $\fn{L}_k(\hat{x}) \df \fn{P}_k^{(0,0)}(\hat{x})$ denotes the $k^{\textrm{th}}$ Legendre polynomial. Similar bases may be constructed on tetrahedral, prismatic, and pyramidal elements, where we refer to the textbook of Karniadakis and Sherwin \cite[Ch.\ 3]{karniadakis_sherwin_spectral_hp_element} for further details regarding multivariate orthogonal polynomials and their use as basis functions for high-order methods.
\subsection{Nodal basis}\label{app:nodal_basis}
While it is possible to directly employ a modal basis to represent the numerical solution such that $\set{B} = \set{B}_0$, we may alternatively choose $\set{B}$ to be a nodal (i.e.\ Lagrange) basis associated with $\tilde{\set{S}} \df \{\tilde{\hat{\vc{x}}}^{(i)}\}_{i=1}^{N_p} \subset \hat{\ug{\Omega}}$ such that $\phi^{(i)}(\tilde{\hat{\vc{x}}}^{(j)}) = \delta_{ij}$, which is satisfied for $d=1$ with $
\phi^{(i)}(\hat{x}) \df \prod_{j=1, j\neq i}^{p+1} (\hat{x} - \tilde{\hat{x}}^{(j)})/(\tilde{\hat{x}}^{(i)} - \tilde{\hat{x}}^{(j)} )$ when the nodes in $\tilde{\set{S}}$ are distinct. When $d \geq 2$, such analytical expressions for Lagrange bases do not exist for arbitrary non-tensorial nodal sets, and we may then take a more general approach by constructing a Vandermonde matrix $\tilde{\mat{V}} \in \R^{N_p \times N_p}$ for the modal basis $\set{B}_0$ described in \textsection \ref{app:modal_basis} such that $\tilde{\el{V}}_{ij} = \phi_0^{(j)}(\tilde{\hat{\vc{x}}}^{(i)})$. If the nodal set $\tilde{\set{S}}$ is unisolvent for the space $\set{V}$, the matrix $\tilde{\mat{V}}$ is invertible, and hence a nodal basis may be obtained as $\phi^{(i)}(\hat{\vc{x}}) \df \sum_{i=1}^{N_p}[\tilde{\mat{V}}^{-\T}]_{ij}\phi_0^{(j)}(\hat{\vc{x}})$. Further details regarding the construction of nodal bases may be found in the textbook of Hesthaven and Warburton \cite[\textsection 3.1, \textsection 6.1, \textsection 10.1]{hesthaven08}.

\section{Discrete inner products}\label{app:disc_in_prod}
\subsection{Quadrature-based approximation}\label{app:quadrature}
Considering quadrature rules with abscissae given by $\set{S} \df \{\hat{\vc{x}}^{(i)}\}_{i=1}^{N} \subset \hat{\ug{\Omega}}$ and $\set{S}^{(\gamma)} \df \{\hat{\vc{x}}^{(\gamma,i)}\}_{i=1}^{N_\gamma} \subset \hat{\ug{\Gamma}}^{(\gamma)}$, corresponding to weights $\set{W} \df \{\omega^{(i)}\}_{i=1}^N$ and  $\set{W}^{(\gamma)} \df \{\omega^{(\gamma,i)}\}_{i=1}^{N_\gamma}$, we may approximate the $L^2$ inner products on the reference element and each of its facets using the discrete inner products $\il\fn{u},\fn{v}\ir_{\mat{W}} \df \sum_{i=1}^N\omega^{(i)} \fn{u}(\hat{\vc{x}}^{(i)}) \fn{v}(\hat{\vc{x}}^{(i)})$ and $\il\fn{u},\fn{v}\ir_{\mat{W}^{(\gamma)}} \df \sum_{i=1}^{N_\gamma}\omega^{(\gamma,i)} \fn{u}(\hat{\vc{x}}^{(\gamma,i)}) \fn{v}(\hat{\vc{x}}^{(\gamma,i)})$, corresponding to (\ref{eq:disc_in_prod_vol}) and (\ref{eq:disc_in_prod_facet}), respectively, with $\mat{W} \df \textrm{diag}([\omega^{(1)}, \ldots, \omega^{(N)}]^\T)$ and $\mat{W}^{(\gamma)} \df \textrm{diag}([\omega^{(\gamma,1)}, \ldots, \omega^{(\gamma,N_\gamma)}]^\T)$. The following lemmas provide sufficient conditions on such quadrature rules for Assumption \ref{asm:sbp}, and consequently, the SBP property in (\ref{eq:sbp}), to be satisfied.
\begin{lemma}\label{lem:total_sbp}
Assumption \ref{asm:sbp} is satisfied for any reference element type with $\set{V} = \set{P}_p(\hat{\ug{\Omega}})$ when the discrete inner products computed using quadrature rules on $\set{S}$ and $\set{S}^{(\gamma)}$ of total degree greater than or equal to $2p-1$ and $2p$, respectively.
\end{lemma}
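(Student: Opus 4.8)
The plan is to show that, for polynomial arguments, each of the three discrete inner products in (\ref{eq:ibp_polynomials_discrete}) reproduces the corresponding exact integral in the continuous integration-by-parts identity, so that the discrete relation inherits the continuous one verbatim. The continuous counterpart of (\ref{eq:ibp}), namely $\int_{\hat{\ug{\Omega}}} \fn{u}\,(\partial\fn{v}/\partial\hat{x}_m)\,\dd\hat{\vc{x}} + \int_{\hat{\ug{\Omega}}} (\partial\fn{u}/\partial\hat{x}_m)\,\fn{v}\,\dd\hat{\vc{x}} = \sum_{\gamma=1}^{N_f}\hat{n}_m^{(\gamma)}\int_{\hat{\ug{\Gamma}}^{(\gamma)}}\fn{u}\fn{v}\,\dd\hat{s}$, holds exactly for any sufficiently smooth $\fn{u},\fn{v}$, being the divergence theorem applied to the field carrying $\fn{u}\fn{v}$ in the $\hat{x}_m$-direction (here the flatness of the reference facets guarantees that $\hat{n}_m^{(\gamma)}$ is constant on each $\hat{\ug{\Gamma}}^{(\gamma)}$). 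In particular it holds for all $\fn{u},\fn{v}\in\set{P}_p(\hat{\ug{\Omega}})$, so it suffices to verify that, under the stated quadrature accuracies, each discrete inner product coincides with its exact integral for these polynomial arguments.

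First I would treat the two volume terms. For $\fn{u},\fn{v}\in\set{P}_p(\hat{\ug{\Omega}})$ the derivative $\partial\fn{v}/\partial\hat{x}_m$ lies in $\set{P}_{p-1}(\hat{\ug{\Omega}})$, so each integrand $\fn{u}\,(\partial\fn{v}/\partial\hat{x}_m)$ and $(\partial\fn{u}/\partial\hat{x}_m)\,\fn{v}$ is a polynomial of total degree at most $2p-1$. A volume quadrature rule of total degree at least $2p-1$ therefore integrates both exactly, giving $\il\fn{u},\partial\fn{v}/\partial\hat{x}_m\ir_{\mat{W}}=\int_{\hat{\ug{\Omega}}}\fn{u}\,(\partial\fn{v}/\partial\hat{x}_m)\,\dd\hat{\vc{x}}$ and likewise for the second term.

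Next I would treat the facet terms. Since each reference facet $\hat{\ug{\Gamma}}^{(\gamma)}$ is assumed flat in \textsection\ref{sec:mesh}, I would parameterize it affinely by $d-1$ intrinsic coordinates; the trace onto $\hat{\ug{\Gamma}}^{(\gamma)}$ of any element of $\set{P}_p(\hat{\ug{\Omega}})$ is then a polynomial of total degree at most $p$ in these coordinates, so the product $\fn{u}\fn{v}$ restricted to the facet has total degree at most $2p$. A facet quadrature rule of total degree at least $2p$ therefore reproduces each surface integral, giving $\il\fn{u},\fn{v}\ir_{\mat{W}^{(\gamma)}}=\int_{\hat{\ug{\Gamma}}^{(\gamma)}}\fn{u}\fn{v}\,\dd\hat{s}$ for every $\gamma\in\{1,\ldots,N_f\}$. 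Substituting these exact evaluations into the three terms of the continuous identity immediately yields (\ref{eq:ibp_polynomials_discrete}), establishing Assumption \ref{asm:sbp}.

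The only step requiring genuine care is the facet degree count: one must confirm that restricting a total-degree-$p$ polynomial to a facet does not raise its degree, which is precisely where the flatness of the reference facets enters, since a curved parameterization would compose the polynomial with a nonlinear map and could inflate the effective integrand degree beyond $2p$, breaking the argument. Everything else is routine degree bookkeeping combined with the exactness hypotheses on the two quadrature rules, and the conclusion that the SBP property (\ref{eq:sbp}) holds then follows from Lemma \ref{lem:sbp}.
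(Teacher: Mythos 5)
Your proposal is correct and follows essentially the same route as the paper's proof: both reduce the discrete identity (\ref{eq:ibp_polynomials_discrete}) to the exact integration-by-parts relation (\ref{eq:ibp}) by noting that the volume integrands lie in $\set{P}_{2p-1}(\hat{\ug{\Omega}})$ and the facet integrands in $\set{P}_{2p}(\hat{\ug{\Gamma}}^{(\gamma)})$, so quadrature of the stated degrees evaluates every term exactly. Your additional remark on why flatness of the reference facets keeps the trace of a total-degree-$p$ polynomial at degree $p$ is a point the paper uses implicitly but does not spell out; it is a worthwhile clarification, not a deviation.
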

\begin{proof}
For any $\fn{u},\fn{v} \in \set{P}_p(\hat{\ug{\Omega}})$, the volume integrals in (\ref{eq:ibp}) contain functions belonging to the space $\set{P}_{2p-1}(\hat{\ug{\Omega}})$, while the surface integrals contain functions in $\set{P}_{2p}(\hat{\ug{\Gamma}}^{(\gamma)})$. For volume and facet quadrature rules of total degree $2p-1$ or greater and $2p$ or greater, respectively, all terms in (\ref{eq:ibp}) can be computed exactly, and (\ref{eq:ibp_polynomials_discrete}) is therefore satisfied for all $m \in \{1,\ldots, d\}$.
\end{proof}
\begin{lemma}\label{lem:tensor_sbp}
For $\hat{\ug{\Omega}} = [-1,1]^d$ and $\set{V} =\set{Q}_p(\hat{\ug{\Omega}})$, Assumption \ref{asm:sbp} is satisfied when the discrete inner products (on the reference element and each of its facets) are evaluated using tensor products of a one-dimensional quadrature rule of at least degree $2p-1$ on $[-1,1]$.
\end{lemma}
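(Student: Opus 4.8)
The plan is to exploit the tensor-product structure of both the space $\set{Q}_p$ and the quadrature, rather than attempting to integrate each individual term of (\ref{eq:ibp}) exactly as in the proof of Lemma \ref{lem:total_sbp}. Fixing a direction $m \in \{1,\ldots,d\}$, I would first note that the hypercube $\hat{\ug{\Omega}} = [-1,1]^d$ has $N_f = 2d$ facets, of which only the two perpendicular to the $\hat{x}_m$-axis contribute to the right-hand side of (\ref{eq:ibp_polynomials_discrete}): the facet at $\hat{x}_m = 1$ with $\hat{n}_m^{(\gamma)} = 1$ and that at $\hat{x}_m = -1$ with $\hat{n}_m^{(\gamma)} = -1$, the remaining facets having $\hat{n}_m^{(\gamma)} = 0$. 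The main obstacle, and the reason the argument of Lemma \ref{lem:total_sbp} cannot be reused verbatim, is that for $\fn{u},\fn{v} \in \set{Q}_p$ the volume integrand $\fn{u}\,\pdvln{\fn{v}}{\hat{x}_m}$ is of degree $2p$ in each coordinate transverse to $\hat{x}_m$, so a tensor-product rule built from a one-dimensional rule of degree $2p-1$ does \emph{not} compute the individual volume terms exactly.

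To circumvent this, I would factor the tensor-product volume sum by separating the $\hat{x}_m$-direction from the remaining $d-1$ coordinates, writing each volume node as a pair consisting of a one-dimensional abscissa $\hat{\xi}^{(i)}$ (with weight $\omega^{(i)}$) in the $\hat{x}_m$-direction and an abscissa $\hat{\vc{x}}'$ (with product weight $\omega'$) of the transverse $(d-1)$-dimensional tensor grid. The discrete inner products $\il\fn{u},\pdvln{\fn{v}}{\hat{x}_m}\ir_{\mat{W}} + \il\pdvln{\fn{u}}{\hat{x}_m},\fn{v}\ir_{\mat{W}}$ then become an outer weighted sum over the transverse nodes $\hat{\vc{x}}'$ of an inner one-dimensional sum over $\hat{\xi}^{(i)}$. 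The key step is to observe that, for each fixed transverse node, the restrictions $\fn{u}(\cdot,\hat{\vc{x}}')$ and $\fn{v}(\cdot,\hat{\vc{x}}')$ are univariate polynomials of degree at most $p$ in $\hat{x}_m$, so the inner summand $\fn{u}\,\pdvln{\fn{v}}{\hat{x}_m} + \pdvln{\fn{u}}{\hat{x}_m}\,\fn{v} = \pdvln{(\fn{u}\fn{v})}{\hat{x}_m}$ restricts to a univariate polynomial of degree at most $2p-1$. The one-dimensional rule therefore integrates it exactly, and by the fundamental theorem of calculus the inner sum collapses to the difference of endpoint products $\fn{u}(1,\hat{\vc{x}}')\fn{v}(1,\hat{\vc{x}}') - \fn{u}(-1,\hat{\vc{x}}')\fn{v}(-1,\hat{\vc{x}}')$.

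Finally I would reassemble the outer transverse sum: the weighted collection of the values $\fn{u}(1,\hat{\vc{x}}')\fn{v}(1,\hat{\vc{x}}')$ over all transverse nodes is, by construction, exactly the tensor-product facet inner product $\il\fn{u},\fn{v}\ir_{\mat{W}^{(\gamma)}}$ on the facet $\hat{x}_m=1$, and similarly for the facet $\hat{x}_m=-1$; together with the signs $\hat{n}_m^{(\gamma)} = \pm 1$ these recover precisely the right-hand side of (\ref{eq:ibp_polynomials_discrete}). The crucial point, distinguishing this from the total-degree case, is that the transverse quadrature never needs to be exact: it appears identically on both sides of the identity, so it suffices that the one-dimensional rule in the differentiation direction resolve the degree-$(2p-1)$ polynomial $\pdvln{(\fn{u}\fn{v})}{\hat{x}_m}$, which is guaranteed by the hypothesis. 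Repeating the argument for each $m \in \{1,\ldots,d\}$ establishes (\ref{eq:ibp_polynomials_discrete}) and hence Assumption \ref{asm:sbp}.
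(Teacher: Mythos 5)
Your proof is correct and follows essentially the same route as the paper's: both exploit the tensor-product factorization of the quadrature so that the transverse sums appear identically on each side of (\ref{eq:ibp_polynomials_discrete}), reducing the claim to exactness of the one-dimensional rule on the degree-$(2p-1)$ polynomial $\partial(\fn{u}\fn{v})/\partial\hat{x}_m$ along each grid line, with the boundary terms reassembling into the tensor-product facet inner products. The only difference is presentational: the paper carries out the factorization modally, expanding $\fn{u},\fn{v}$ in a tensor-product basis and reducing to the identity (\ref{eq:ibp_quad_1d}) for pairs of one-dimensional basis functions, whereas you work nodally with restrictions to lines through the transverse quadrature nodes.
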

\begin{proof}
The proof is similar to that in Kopriva and Gassner \cite[\textsection 3]{kopriva_nodaldg_choosing_quadrature_weakform_10} and involves expanding $\fn{u},\fn{v} \in \set{Q}_p(\hat{\ug{\Omega}})$ in terms of the tensor product of  $\set{B}_{\textrm{1D}} \df \{\phi_{\textrm{1D}}^{(i)}\}_{i=1}^{p+1}$ for the space $\set{P}_p([-1,1])$ such that $\fn{u}(\hat{\vc{x}}) = \sum_{\vc{\alpha} \in \set{N}}\el{u}^{(\vc{\alpha})}\phi_{\textrm{1D}}^{(\alpha_1)}(\hat{x}_1) \cdots \phi_{\textrm{1D}}^{(\alpha_d)}(\hat{x}_d)$ and $\fn{v}(\hat{\vc{x}}) = \sum_{\vc{\alpha} \in \set{N}}\el{v}^{(\vc{\alpha})}\phi_{\textrm{1D}}^{(\alpha_1)}(\hat{x}_1) \cdots \phi_{\textrm{1D}}^{(\alpha_d)}(\hat{x}_d)$, where we recall the definition of $\set{N}$ for a tensor-product polynomial space in \textsection \ref{sec:approx_space}. Defining the abscissae and weights of the one-dimensional quadrature rule as $\set{S}_{\textrm{1D}} \df \{\hat{x}_{\textrm{1D}}^{(i)}\}_{i=1}^{N_{\textrm{1D}}}$ and $\set{W}_{\textrm{1D}} \df \{\omega_{\textrm{1D}}^{(i)}\}_{i=1}^{N_{\textrm{1D}}}$, respectively, we may factorize the discrete inner product as in \cite[Eq.\ (32)]{kopriva_nodaldg_choosing_quadrature_weakform_10}, and the relation in (\ref{eq:ibp_polynomials_discrete}) therefore follows from the fact that
\begin{equation}\label{eq:ibp_quad_1d}
\begin{multlined}
\sum_{k=1}^{N_{\textrm{1D}}}\omega_{\textrm{1D}}^{(k)}\phi_{\textrm{1D}}^{(i)}(\hat{x}_{\textrm{1D}}^{(k)}\normalsize)\pdv{\phi_{\textrm{1D}}^{(j)}}{\hat{x}}\normalsize(\hat{x}_{\textrm{1D}}^{(k)}\normalsize)+\sum_{k=1}^{N_{\textrm{1D}}}\omega_{\textrm{1D}}^{(k)}\pdv{\phi_{\textrm{1D}}^{(i)}}{\hat{x}}\normalsize(\hat{x}_{\textrm{1D}}^{(k)}\normalsize) \phi_{\textrm{1D}}^{(j)}(\hat{x}_{\textrm{1D}}^{(k)}\normalsize) \\ = \phi_{\textrm{1D}}^{(i)}(1)\phi_{\textrm{1D}}^{(j)}(1)  -\phi_{\textrm{1D}}^{(i)}(-1)\phi_{\textrm{1D}}^{(j)}(-1)
\end{multlined}
\end{equation}
is satisfied for all $i,j \in \{1,\ldots, p+1\}$ when the quadrature is of degree $2p-1$ or higher.
\end{proof}
\begin{remark}
Unlike Lemma \ref{lem:total_sbp}, Lemma \ref{lem:tensor_sbp} does not presuppose any accuracy conditions on the facet quadrature, and therefore ensures that Assumption \ref{asm:sbp} is satisfied, for example, in the case of tensor-product LGL quadrature rules with $p+1$ nodes in each direction (which were considered for the DGSEM-LGL in \cite{kopriva_nodaldg_choosing_quadrature_weakform_10}), for which the corresponding facet quadrature is only of degree $2p-1$.
\end{remark}
\begin{remark}\label{rmk:spd}
The matrices $\mat{W}$ and $\mat{W}^{(\gamma)}$ are SPD if and only if the quadrature weights are strictly positive, which is easily checked. However, Assumption \ref{asm:inner_prod} also requires $\mat{V}$ to be of rank $N_p$, which is difficult to prove when $d \geq 2$. In practice, the rank of $\mat{V}$ is verified numerically, and we note that Assumption \ref{asm:inner_prod} was confirmed to hold for all schemes described in \textsection \ref{sec:design_choices}.
\end{remark}

\subsection{Collocation-based approximation}\label{app:collocation}
Assuming, as in \textsection \ref{sec:review_fr}, that $\set{S}$ is unisolvent for the space $\set{V}$, we may take $\set{B}$ to be a nodal basis (as described in \textsection \ref{app:nodal_basis}) with $\tilde{\set{S}} = \set{S}$, which results in $\mat{V} = \mat{I}$. The corresponding interpolation operator satisfying $(\opr{I}\fn{v})(\hat{\vc{x}}^{(i)}) = \fn{v}(\hat{\vc{x}}^{(i)})$ for all $i \in \{1,\ldots, N\}$ is then given explicitly as $(\opr{I}\fn{v})(\hat{\vc{x}}) \df \sum_{i=1}^{N_p}\fn{v}(\hat{\vc{x}}^{(i)})\phi^{(i)}(\hat{\vc{x}})$. Similarly, if $\set{S}^{(\gamma)}$ is unisolvent for the trace space $\set{V}^{(\gamma)} \df \{\hat{\ug{\Gamma}}^{(\gamma)} \ni \hat{\vc{x}} \mapsto \fn{v}(\hat{\vc{x}}) \in \R: \exists \fn{u} \in \set{V} : \fn{v} = \fn{u}\lvert_{\hat{\ug{\Gamma}}^{(\gamma)}} \}$ of dimension $N_p^{(\gamma)}$, a nodal basis $\set{B}^{(\gamma)} \df \{\phi^{(\gamma,i)}\}_{i=1}^{N_p^{(\gamma)}}$ may be defined for each facet of the reference element such that $\phi^{(\gamma,i)}(\hat{\vc{x}}^{(\gamma,j)}) = \delta_{ij}$, with a corresponding interpolation operator $(\opr{I}^{(\gamma)}\fn{v})(\hat{\vc{x}}) \df \sum_{i=1}^{N_p^{(\gamma)}}\fn{v}(\hat{\vc{x}}^{(\gamma,i)})\phi^{(\gamma,i)}(\hat{\vc{x}})$. The associated discrete inner products are thus obtained as $\il\fn{u},\fn{v}\ir_{\mat{W}} \df \il\opr{I}\fn{u},\opr{I}\fn{v}\ir_{\hat{\Omega}}$ and $\il\fn{u},\fn{v}\ir_{\mat{W}^{(\gamma)}} \df \il\opr{I}^{(\gamma)}\fn{u},\opr{I}^{(\gamma)}\fn{v}\ir_{\hat{\ug{\Gamma}}^{(\gamma)}}$, which take the forms given in (\ref{eq:disc_in_prod_vol}) and (\ref{eq:disc_in_prod_facet}), respectively, with $\el{W}_{ij} = \el{M}_{ij} = \il\phi^{(i)},\phi^{(j)}\ir_{\hat{\ug{\Omega}}}$ and $\smash{\el{W}_{ij}^{(\gamma)}} = \il\phi^{(i)},\phi^{(j)}\ir_{\hat{\ug{\Gamma}}^{(\gamma)}}$, where the integrals may be evaluated in a preprocessing stage, either analytically or using quadrature rules of sufficient degree. Since functions belonging to $\set{V}$ and $\set{V}^{(\gamma)}$ are invariant under the interpolation operators $\opr{I}$ and $\opr{I}^{(\gamma)}$, respectively, the discrete inner products are exactly equal to the corresponding $L^2$ inner products for such functions. Moreover, the matrices $\mat{W}$ and $\mat{W}^{(\gamma)}$ are generally dense, and are ensured to be SPD due to the linear independence of the functions in $\set{B}$ and $\set{B}^{(\gamma)}$, respectively. Assumptions \ref{asm:sbp} and \ref{asm:inner_prod} are therefore both satisfied by construction, enabling the formulation of linearly stable discretizations on unisolvent nodal sets for which the quadrature accuracy requirements of Lemmas \ref{lem:total_sbp} and \ref{lem:tensor_sbp} may not be satisfied.
\par

\begin{remark}
If the interpolatory quadrature rule with abscissae $\set{S}$ and corresponding weights given by $\omega^{(i)} =  \sum_{j=1}^{N_p} \el{W}_{ij} =  \int_{\hat{\ug{\Omega}}}\phi^{(i)}(\hat{\vc{x}})\, \dd \vc{x}$ is exact for all products of two functions in $\set{V}$, it can be shown that $\mat{W}$ reduces to a diagonal matrix of quadrature weights, recovering an approximation identical to that described in \textsection \ref{app:quadrature}. An analogous equivalence holds for $\mat{W}^{(\gamma)}$.  
\end{remark}
\begin{remark}
Collocation-based discrete inner products may also be used with modal bases and over-integrated formulations (i.e.\ those with $N > N_p$ or $N_\gamma > N_p^{(\gamma)})$ by defining $\opr{I}$ using an auxiliary nodal basis $\tilde{\set{B}}$ for the space $\tilde{\set{V}}$, distinct from the solution basis $\set{B}$, and similarly defining the operators $\set{I}^{(\gamma)}$ using nodal bases for spaces $\tilde{\set{V}}^{(\gamma)}$ which may differ from $\set{V}^{(\gamma)}$. Assumption \ref{asm:sbp} is then satisfied for such schemes when $\tilde{\set{V}} \supseteq \set{V}$ and $\tilde{\set{V}}^{(\gamma)} \supseteq \set{V}^{(\gamma)}$ for all $\gamma \in \{1,\ldots, N_f\}$. In the case of $\tilde{\set{V}} \neq \set{V}$, we may express the orthogonality condition in Lemma \ref{lem:proj} as $\il \fn{v}, \Pi\fn{u}- \opr{I}\fn{u}  \ir_{\hat{\ug{\Omega}}} = 0$ for all $\fn{v} \in \set{V}$, and it therefore follows (see, for example, \cite[\textsection 5]{dubiner_spectral_triangle_91}) that the discrete projection $\Pi$ corresponds an interpolation $\opr{I}$ onto the space $\tilde{\set{V}}$, followed by an exact $L^2$ projection onto $\set{V}$.

\end{remark}

\section{Multi-indices and coefficients for VCJH schemes}\label{app:k_mat}
The following lemma establishes suitable choices of the multi-index set $\set{M}$ in order for the first part of Assumption \ref{asm:k_mat} to be satisfied.
\begin{lemma}\label{lem:k_mat}
For the total-degree polynomial space $\set{P}_p(\hat{\ug{\Omega}})$ on any element type, the choice of $\set{M} \df \{\vc{\alpha} \in \mathbb{N}_0^d : \lvert \vc{\alpha} \rvert = p\}$ ensures that $\mat{K}\mat{D}^{(m)} = \zeromat$ for all $m \in \{1,\ldots, d\}$, irrespective of $\fn{C}(\vc{\alpha})$ and the reference element type. Such a property is ensured for the tensor-product polynomial space $\set{Q}_p(\hat{\ug{\Omega}})$ on $\hat{\ug{\Omega}} = [-1,1]^d$ with the choice of $\set{M} \df \{(p,\ldots,p)\}$.
\end{lemma}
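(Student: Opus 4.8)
The plan is to exploit the fact that each matrix $\mat{D}^{(m)}$ is, by (\ref{eq:diff_mat}), the representation in the basis $\set{B}$ of the partial differentiation operator $\partial/\partial\hat{x}_m : \set{V} \to \set{V}$, which is well defined precisely because $\set{V}$ is assumed closed under differentiation. Since the map sending $\fn{v} \in \set{V}$ to its vector of expansion coefficients is an isomorphism, the product of two such matrices represents the composition of the corresponding operators; consequently $\mat{D}^{(\vc{\alpha})} = (\mat{D}^{(1)})^{\alpha_1}\cdots(\mat{D}^{(d)})^{\alpha_d}$ represents the differential operator $\partial^{\lvert\vc{\alpha}\rvert}/(\partial\hat{x}_1^{\alpha_1}\cdots\partial\hat{x}_d^{\alpha_d})$, and $\mat{D}^{(\vc{\alpha})}\mat{D}^{(m)}$ represents a differential operator of total order $\lvert\vc{\alpha}\rvert+1$ carrying one additional derivative in $\hat{x}_m$. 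Because a linear operator on $\set{V}$ is the zero operator if and only if its matrix representation is $\zeromat$, it suffices to show that the relevant composed differential operators annihilate $\set{V}$.

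For the total-degree space $\set{P}_p(\hat{\ug{\Omega}})$ with $\set{M} = \{\vc{\alpha} \in \mathbb{N}_0^d : \lvert\vc{\alpha}\rvert = p\}$, I would argue that for each $\vc{\alpha} \in \set{M}$ the composition $\mat{D}^{(\vc{\alpha})}\mat{D}^{(m)}$ represents a differential operator of total order $p+1$. Any such operator maps every polynomial of total degree at most $p$ to zero, so it annihilates $\set{P}_p(\hat{\ug{\Omega}})$ and hence $\mat{D}^{(\vc{\alpha})}\mat{D}^{(m)} = \zeromat$. Substituting into $\mat{K} = \sum_{\vc{\alpha}\in\set{M}} \frac{\fn{C}(\vc{\alpha})}{\lvert\hat{\ug{\Omega}}\rvert}(\mat{D}^{(\vc{\alpha})})^\T\mat{M}\mat{D}^{(\vc{\alpha})}$ then shows that every term of $\mat{K}\mat{D}^{(m)}$ carries the vanishing factor $\mat{D}^{(\vc{\alpha})}\mat{D}^{(m)}$, so $\mat{K}\mat{D}^{(m)} = \zeromat$ for all $m \in \{1,\ldots,d\}$, independently of the coefficients $\fn{C}(\vc{\alpha})$ and of the reference element type.

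For the tensor-product space $\set{Q}_p(\hat{\ug{\Omega}})$ on $\hat{\ug{\Omega}} = [-1,1]^d$ with $\set{M} = \{(p,\ldots,p)\}$, the single term of $\mat{K}$ involves $\mat{D}^{(\vc{\alpha})} = (\mat{D}^{(1)})^p\cdots(\mat{D}^{(d)})^p$. Here I would first observe that the operators $\partial/\partial\hat{x}_m$ commute on the smooth functions comprising $\set{V}$ by Schwarz's theorem, so their matrix representations commute and $\mat{D}^{(\vc{\alpha})}\mat{D}^{(m)}$ represents the operator that differentiates $p+1$ times with respect to $\hat{x}_m$ and $p$ times with respect to each remaining coordinate. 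Since every function in $\set{Q}_p(\hat{\ug{\Omega}})$ has degree at most $p$ in $\hat{x}_m$, differentiating $p+1$ times in $\hat{x}_m$ yields zero, whence $\mat{D}^{(\vc{\alpha})}\mat{D}^{(m)} = \zeromat$ and therefore $\mat{K}\mat{D}^{(m)} = \zeromat$ for all $m$.

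The only genuine obstacle lies in the bookkeeping of the first paragraph: establishing rigorously that matrix multiplication of the $\mat{D}^{(m)}$ mirrors composition of the underlying differential operators and that a vanishing operator forces a vanishing matrix representation. Once this representation-theoretic correspondence is in place, both cases reduce to the elementary fact that a differential operator whose order exceeds the relevant degree of a polynomial space annihilates that space.
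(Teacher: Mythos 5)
Your proposal is correct and takes essentially the same approach as the paper: both reduce the claim to showing $\mat{D}^{(\vc{\alpha})}\mat{D}^{(m)} = \zeromat$ for each $\vc{\alpha} \in \set{M}$, and then argue that the composed differential operator carries too many derivatives for the space at hand---total order $p+1$ annihilates $\set{P}_p(\hat{\ug{\Omega}})$, and order $p+1$ in the single coordinate $\hat{x}_m$ annihilates $\set{Q}_p(\hat{\ug{\Omega}})$. Your explicit treatment of the correspondence between matrix products and operator composition (and of the commutativity of partial derivatives) simply makes rigorous what the paper's proof leaves implicit.
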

\begin{proof}
Since $\mat{K}\mat{D}^{(m)} = \zeromat$ if $\mat{D}^{(\vc{\alpha})}\mat{D}^{(m)} = \zeromat$ for all $\vc{\alpha} \in \set{M}$, we choose $\set{M}$ such that the image of any function $\fn{v} \in \set{V}$ under the operator $\partial^{\lvert\vc{\alpha}\rvert}/\partial \hat{x}_1^{\alpha_1} \cdots \partial \hat{x}_d^{\alpha_d}$ represented by the matrix $\mat{D}^{(\vc{\alpha})}$ is a constant function. Since differentiating any function $\fn{v} \in \set{P}_p(\hat{\ug{\Omega}})$ a total of $p$ times yields a constant function, and the action of the operator $\partial^{\lvert\vc{\alpha}\rvert}/\partial \hat{x}_1^{\alpha_1} \cdots \partial \hat{x}_d^{\alpha_d}$ consists of applying a total of $\lvert \vc{\alpha} \rvert$ partial derivatives, constraining $\set{M}$ to satisfy $\lvert\vc{\alpha}\rvert = p$ leads to the desired result for the total-degree polynomial space $\set{P}_p(\hat{\ug{\Omega}})$. In the tensor-product case, we may proceed similarly to the proof of Lemma \ref{lem:tensor_sbp} and note that any function $\fn{v} \in \set{Q}_p(\hat{\ug{\Omega}})$ may expanded as $\fn{v}(\hat{\vc{x}}) = \sum_{\vc{\alpha} \in \set{N}}\el{v}^{(\vc{\alpha})}\phi_{\textrm{1D}}^{(\alpha_1)}(\hat{x}_1) \cdots \phi_{\textrm{1D}}^{(\alpha_d)}(\hat{x}_d)$, where it is clear that applying the operator $\partial^p/\partial \hat{x}_m^p$  eliminates the variation of $\fn{v}(\hat{\vc{x}})$ with respect to the $m^{\textrm{th}}$ coordinate. It then follows that differentiating $p$ times with respect to each coordinate yields a constant function, and we can therefore take $\set{M}$ to contain the single multi-index $\vc{\alpha} = (p,\ldots, p)$ in order to obtain $\mat{K}\mat{D}^{(m)} = \zeromat$ for all $m \in \{1,\ldots, d\}$.
\end{proof}
Following Castonguay \etal \cite[\textsection 5.3]{castonguay_vincent_jameson_triangular_fr_11} and Williams \etal \cite[\textsection 4.1]{williams_esfr_adv_diff_tetrahedra_13}, the set of multi-indices $\set{M}$ associated with the space $\set{P}_p(\hat{\ug{\Omega}})$ may be parametrized in terms of $d-1$ scalar indices to obtain
\begin{equation}\label{eq:k_param}
\mat{K}^{\textrm{1D}} \df \frac{c}{\lvert\hat{\ug{\Omega}}\rvert}(\mat{D}^p)^\T\mat{M}\mat{D}^p, \quad
\quad \mat{K}^{\textrm{2D}} \df \frac{c}{\lvert\hat{\ug{\Omega}}\rvert}\sum_{q=0}^p\mqty(p \\ q) (\mat{D}^{(p-q,q)})^\T \mat{M}\mat{D}^{(p-q,q)},
\end{equation}
and 
\begin{equation}
\mat{K}^{\textrm{3D}} \df \frac{c}{\lvert\hat{\ug{\Omega}}\rvert}\sum_{q_1=0}^p\sum_{q_2=0}^{q_1}\mqty(p \\ q_1)\mqty(q_1 \\ q_2) (\mat{D}^{(p-q_1,q_1-q_2,q_2)})^\T \mat{M}\mat{D}^{(p-q_1,q_1-q_2,q_2)},
\end{equation}
defining one-parameter families of symmetric correction fields, where the VCJH parameter $c \in \R$ determines the properties of the resulting schemes. With respect to the formalism of Theorem \ref{thm:vcjh}, we may then take $\fn{C}(\alpha) = c$ for $d=1$, $C(\vc{\alpha}) = c\ \binom{p}{\alpha_2}$ for $d=2$, and $\fn{C}(\vc{\alpha}) = c\ \binom{p}{p-\alpha_1}\binom{p-\alpha_1}{\alpha_3}$ for $d=3$. These choices lead to $\mat{K}$ being symmetric positive-semidefinite for all $c \geq 0$, which is a sufficient condition for $\mat{M}+\mat{K}$ to be SPD, and hence we see from Lemma \ref{lem:k_mat} that Assumption \ref{asm:k_mat} is satisfied under such conditions. While $c \geq 0$ is typically assumed for simplicial elements with $d \geq 2$, it has been shown in the case of $d=1$ (see, for example, Vincent \etal \cite{vincent_esfr_10}, Allaneau and Jameson \cite{allaneau_jameson_dg_fr_11}, and Ranocha \etal \cite{ranocha_sbp_cpr_16}) that there exists a constant $c_- < 0$ depending only on $p$ such that $\mat{M}+\mat{K}$ is SPD for $ c_- < c < \infty$. In the tensor-product case, the choice of $\set{M}$ given in Lemma \ref{lem:k_mat} leads to a sum over a single multi-index $\vc{\alpha} \df (p,\ldots,p)$, corresponding to
\begin{equation}
\mat{K}^{\textrm{TP}} \df \frac{c}{\lvert \hat{\ug{\Omega}} \rvert} \Big((\mat{D}^{(1)})^p \cdots (\mat{D}^{(d)})^p\Big)^\T\mat{M}\Big((\mat{D}^{(1)})^p \cdots (\mat{D}^{(d)})^p\Big).
\end{equation}
It therefore follows from (\ref{eq:sobolev_norm}) that when $\il\fn{u},\fn{v}\ir_{\mat{W}} = \il\fn{u},\fn{v}\ir_{\hat{\ug{\Omega}}}$ for all $\fn{u},\fn{v} \in \set{V}$, the norm in Lemma \ref{lem:norm} may be expressed as
\begin{equation}
\lVert \fn{v} \rVert_{\set{V}_h}^2 = \sum_{\kappa=1}^K \int_{\hat{\ug{\Omega}}}\Bigg(\big(\fn{v}(\fv{X}^{(\kappa)}(\hat{\vc{x}}))\big)^2 + \frac{c}{\lvert\hat{\ug{\Omega}}\rvert}\bigg(\frac{\partial^{d\cdot p}\fn{v}(\fv{X}^{(\kappa)}(\hat{\vc{x}}))}{\partial \hat{x}_1^{p} \cdots \partial \hat{x}_d^{p}}\bigg)^2 \Bigg)\mathcal{J}^{(\kappa)}(\hat{\vc{x}})\, \dd \hat{\vc{x}},
\end{equation}
which for $d=3$ recovers the norm used by Cicchino and Nadarajah \cite{ciccino_tensor_product_new_norm_fr_20} to prove stability for VCJH schemes on affine hexahedral elements.
}
\bibliographystyle{ieeetr}

\end{document}